\documentclass[11pt]{amsart}
\usepackage{latexsym}
\usepackage{amsfonts}

\usepackage{amsmath,amsthm,amssymb}

\title[Ping-pong and Outer space]{Ping-pong and Outer space}

\author[I.~Kapovich]{Ilya Kapovich}

\address{\tt Department of Mathematics, University of Illinois at
  Urbana-Champaign, 1409 West Green Street, Urbana, IL 61801, USA
  \newline http://www.math.uiuc.edu/\~{}kapovich/} \email{\tt
  kapovich@math.uiuc.edu}

\author[M.~Lustig]{Martin Lustig}\address{\tt Math\'ematiques
  (LATP), Universit\'e Paul C\'ezanne -Aix Marseille III,\\  av. Escadrille
  Normandie-Ni\'emen, 13397 Marseille 20, France} \email{\tt Martin.Lustig@univ-cezanne.fr}

\newtheorem{thm}{Theorem}[section] \newtheorem{lem}[thm]{Lemma}
\newtheorem{cor}[thm]{Corollary} 
\newtheorem{prop}[thm]{Proposition} \theoremstyle{definition}
\newtheorem{defn}[thm]{Definition}

\newtheorem{conv}[thm]{Convention} \newtheorem{rem}[thm]{Remark}



\def\epsilon{\varepsilon}
\def\phi{\varphi}
\def\Pr{\mathbb P}

\newcommand{\Supp}{\mbox{Supp}}

\newcommand{\Curr}{\mbox{Curr}}
\newcommand{\Out}{\mbox{Out}}
\newcommand{\Aut}{\mbox{Aut}}
\newcommand{\inv}{^{-1}}

\newcommand{\Stab}{\mbox{Stab}}
\newcommand{\Mod}{\mbox{Mod}}






\newcommand{\FN}{F_N}   
\newcommand{\cvn}{\mbox{cv}_N}
\newcommand{\cvnbar}{\overline{\mbox{cv}}_N}
\newcommand{\CVN}{\mbox{CV}_N}
\newcommand{\CVNbar}{\overline{\mbox{CV}}_N}



\newcommand{\PCurr}{\Pr\Curr(\FN)}

\newcommand{\R}{\mathbb R}
\newcommand{\Z}{\mathbb Z}


\def\strutdepth{\dp\strutbox}
\def \ss{\strut\vadjust{\kern-\strutdepth \sss}}
\def \sss{\vtop to \strutdepth{
\baselineskip\strutdepth\vss\llap{$\diamondsuit\;\;$}\null}}

\def\strutdepth{\dp\strutbox}
\def \sst{\strut\vadjust{\kern-\strutdepth \ssss}}
\def \ssss{\vtop to \strutdepth{
\baselineskip\strutdepth\vss\llap{$\spadesuit\;\;$}\null}}

\def\strutdepth{\dp\strutbox}
\def \ssh{\strut\vadjust{\kern-\strutdepth \sssh}}
\def \sssh{\vtop to \strutdepth{
\baselineskip\strutdepth\vss\llap{$\heartsuit\;\;$}\null}}


\def\qed{\hfill\rlap{$\sqcup$}$\sqcap$\par}


\vfuzz2pt 


\def\strutdepth{\dp\strutbox}
\def \ss{\strut\vadjust{\kern-\strutdepth \sss}}
\def \sss{\vtop to \strutdepth{
\baselineskip\strutdepth\vss\llap{$\diamondsuit\;\;$}\null}}

\def\strutdepth{\dp\strutbox}
\def \sst{\strut\vadjust{\kern-\strutdepth \ssss}}
\def \ssss{\vtop to \strutdepth{
\baselineskip\strutdepth\vss\llap{$\spadesuit\;\;$}\null}}

\def\qed{\hfill\rlap{$\sqcup$}$\sqcap$\par}

\begin{document}

\begin{abstract}
We prove that, if $\phi,\psi\in \Out(\FN)$ are hyperbolic iwips
(irreducible with irreducible powers) such that $\langle
\phi,\psi\rangle\subseteq \Out(\FN)$ is not virtually cyclic, then some high
powers of $\phi$ and $\psi$ generate a free subgroup of rank two for which all
non-trivial elements are again hyperbolic iwips. Being a hyperbolic iwip element of $\Out(\FN)$ is strongly analogous to being a pseudo-Anosov element of a mapping class group, so the above result provides
analogues of ``purely pseudo-Anosov" free subgroups in
$\Out(\FN)$.
\end{abstract}

\thanks{The first author was supported by the NSF
  grants DMS-0603921 and DMS-0904200}

\subjclass[2000]{Primary 20F, Secondary 57M, 37B, 37D}

\maketitle

\tableofcontents

\section{Introduction}

One of the most important sources for understanding free group
automorphisms is the analogy with surface groups and mapping classes.
Many key concepts from Thurston's approach to Teichm\"uller theory
have been successfully carried over to the $\Out(\FN)$ world, most
notably Culler-Vogtmann's Outer space \cite{CV}, and Bestvina-Handel's
train track representatives \cite{BH92}.  However, often the situation
for $\Out(\FN)$ turns out to be more difficult (but also much richer
in interesting phenomena) than in the parallel mapping class cosmos.
One such fundamental situation arises with the translation of the
concept of {\em pseudo-Anosov} homeomorphisms to free group
automorphisms.  It turns out that there are two such possible
translations, both natural and interesting:

The first translation is based on the characterization of
pseudo-Anosov homeomorphisms $h: \Sigma \to \Sigma$ (where $\Sigma$ is
a closed hyperbolic surface) as precisely those which give a mapping
torus $\Sigma \rtimes_h \mathbb S^1$ that admits a hyperbolic structure.

This is equivalent to the condition that $\pi_1(\Sigma \rtimes_h \mathbb S^1)$ is Gromov-hyperbolic.  By analogy, one can consider {\em hyperbolic} automorphisms $\Phi: \FN \to \FN$, defined by the fact that the mapping torus group $G_\Phi = \FN \rtimes_\Phi \Z$ is word-hyperbolic. The Bestvina-Feighn Combination Theorem~\cite{BF92} implies that an automorphism $\Phi$ of $\FN$ is hyperbolic if and only if for some (and hence any) free basis $A$ of $\FN$ there exist $M\ge 1$ and $\lambda>1$ such that for every $w\in \FN$ we have $\lambda|w|_A\le \max\{|\Phi^M(w)|_A, |\Phi^{-M}(w)|_A\}$. This latter condition is often taken as the definition of an automorphism $\Phi$ of $\FN$ for being hyperbolic. It is not hard to see that
whether $\Phi\in \Aut(\FN)$ is hyperbolic or not
depends only on the outer automorphism class $\phi$ of $\Phi$ in $\Out(\FN)$. An important result of Brinkmann~\cite{Br} shows that $\phi\in \Out(\FN)$ is hyperbolic if and only if $\phi$ is \emph{atoroidal}, that is, if there does not exist a non-trivial conjugacy class in $\FN$ that is fixed by some positive power of $\phi$.

The second
translation of the notion of being pseudo-Anosov to the free
group setting is based on the dynamical properties of pseudo-Anosov
homeomorphisms: a homeomorphism $h: \Sigma \to \Sigma$ is pseudo-Anosov
if and only if $h$ and hence any positive power of $h$ is not
reducible. However, in the free group setting the notion of {\em
  reducible} automorphisms $\Phi: \FN \to \FN$ is much more delicate
than for surfaces: If $h$ fixes (up to isotopy) an essential
subsurface of $\Sigma$, than it also fixes the complementary
subsurface. But is is easy to find examples where $\Phi$ fixes a
proper free factor of $\FN$,
but no complementary
free factor is is mapped to a conjugate of itself.

In this context, the notion of being \emph{irreducible} for elements of $\Out(\FN)$ (see Definition~\ref{defn:irr} below) has been proposed in \cite{BH92}, but contrary to ``pseudo-Anosov'',
the property ``irreducible'' is not
stable under replacing the automorphism by
a positive power.
More useful seems the following notion:
An element $\phi\in \Out(F_N)$
(or any of its lifts $\Phi \in \Aut(F_N)$) is said to be
\emph{irreducible with irreducible powers} or an \emph{iwip} for
short, if for every $n\ge 1$
the power
$\phi^n$ is irreducible (sometimes such
automorphisms are also called \emph{fully irreducible}). It is not hard to see that $\phi\in
\Out(\FN)$ is an iwip if and only if no positive power of $\phi$
preserves the conjugacy class of a proper free factor of $\FN$ (and one can take the latter condition as the definition of being an iwip). It is
easy to construct examples of elements of $\Out(F_N)$ that are
hyperbolic but reducible. Similarly, there exists non-hyperbolic iwips
(they come from pseudo-Anosov homeomorphisms of once-punctured
surfaces). Thus the notions of being iwip and being hyperbolic are
logically independent.

Both of these free group analogues of being pseudo-Anosov play an important role in the study of $\Out(\FN)$.  Iwips have nicer properties: for example, they act with ``North-South" dynamics on the Thurston compactification of Outer space~\cite{LL} (just as pseudo-Anosovs do on Teichm\"uller space and its Thurston boundary).  Hyperbolic automorphisms, on the other hand, are easier to come by. For example, it has been shown by Bestvina, Feighn and Handel~\cite{BFH97} that every subgroup of $\Out(\FN)$, which contains a hyperbolic iwip and which is not virtually cyclic, contains a free subgroup of rank two where every non-trivial element is
hyperbolic.

The main result of this paper is the analogous statement of this
last result for hyperbolic iwips (c.f. Theorem~\ref{iwips-only1} below):

\begin{thm}\label{thm:A}
Let $N\ge 3$ and let
$\phi,\psi\in \Out(F_N)$ be hyperbolic iwips such that the subgroup $\langle \phi,\psi\rangle \subseteq \Out(F_N)$ is not virtually cyclic.
Then there exists $m,n\ge 1$ such that the subgroup $G=\langle \phi^m,\psi^n\rangle \subseteq \Out(F_N)$ is free of rank two and such that every nontrivial element of $G$ is again a hyperbolic iwip.
\end{thm}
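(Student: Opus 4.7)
The approach is a ping-pong argument in the Thurston compactification $\CVNbar$ of Outer space. By Levitt--Lustig \cite{LL}, each hyperbolic iwip in $\Out(\FN)$ acts on $\CVNbar$ with North--South dynamics, so $\phi$ gives attracting and repelling fixed trees $T_\phi^\pm$ and $\psi$ gives $T_\psi^\pm$.

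The first step is to arrange, after possibly passing to powers and/or replacing $\psi$ by $\psi^{-1}$, that the four trees $\{T_\phi^+, T_\phi^-, T_\psi^+, T_\psi^-\}$ are pairwise distinct. The key input is a rigidity statement: the $\Out(\FN)$-stabilizer of $T_\phi^+$ is virtually cyclic (in fact virtually $\langle\phi\rangle$), because any outer automorphism preserving $T_\phi^+$ must scale its translation length function by a factor in the discrete cyclic group of Perron--Frobenius dilations of $\phi$. If any two of the four fixed trees coincided, this rigidity would force $\langle\phi,\psi\rangle$ to be virtually cyclic, contradicting the hypothesis.

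Once the four points are distinct, choose small pairwise disjoint neighborhoods $U_\phi^\pm, U_\psi^\pm \subseteq \CVNbar$. North--South dynamics then yields the Klein ping-pong conditions for sufficiently high powers: $\phi^m$ pushes $\CVNbar \smsm U_\phi^-$ into $U_\phi^+$, and likewise for the other three generators, producing $G = \langle \phi^m, \psi^n\rangle \cong F_2$. Moreover each nontrivial cyclically reduced word $w \in G$ admits its own pair of attracting/repelling fixed points in $\CVNbar$, obtained as limits of ping-pong iterates and lying inside the neighborhoods dictated by the first and last syllables of $w^{\pm 1}$.

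To conclude that each such $w$ is itself a hyperbolic iwip, I would run a parallel ping-pong on the space $\PCN$ of projectivized geodesic currents. The iwips $\phi,\psi$ admit irrational attracting/repelling currents $\mu_\phi^\pm, \mu_\psi^\pm$ on which they act with North--South dynamics. A distinctness argument analogous to the one above (using virtual cyclicity of $\Stab(\mu_\phi^\pm)$) and a second ping-pong then produce, for each nontrivial $w \in G$, an irrational attracting current $\mu_w^+$ whose support is a limit of supports of the currents $\mu_\phi^\pm, \mu_\psi^\pm$. The iwip property for $w$ follows because the support of the attracting current of a reducible outer automorphism is carried by a proper free factor --- incompatible with the ping-pong support --- and hyperbolicity, equivalently atoroidality by Brinkmann \cite{Br}, follows because a $w$-fixed conjugacy class would yield a \emph{rational} $w$-fixed current, contradicting that the only $w$-fixed currents are the irrational ones produced by the ping-pong. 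The main obstacle is synchronizing the two ping-pongs on $\CVNbar$ and $\PCN$ so that the dynamical conclusions are compatible for \emph{every} element of $G$, not merely for the generators.
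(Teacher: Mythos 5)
Your overall architecture — ping-pong simultaneously on $\CVNbar$ (Levitt--Lustig North--South dynamics) and on $\PCurr$ (Martin's North--South dynamics), together with virtual cyclicity of eigentree/eigencurrent stabilizers to get four distinct poles — matches the paper's framework, and you correctly flag the central difficulty of synchronizing the two ping-pongs. But the proposal contains genuine gaps precisely at the point where that synchronization must be carried out, namely in ruling out reducible and toroidal elements of $G$; as written, those steps would not go through.

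For the iwip part, you assert that ``the support of the attracting current of a reducible outer automorphism is carried by a proper free factor --- incompatible with the ping-pong support.'' This is too vague to be a proof, and the first half is not even well-posed: a reducible hyperbolic element of $\Out(F_N)$ does not, in general, have a canonical attracting current on all of $\PCurr$, and whatever fixed currents it does have need not have support controlled by the reducing free factor in an obvious way. The paper instead proves a precise structural statement (Proposition~\ref{poles1}): for any hyperbolic non-iwip $\Phi$ there exist a tree $T_0\in\cvnbar$ and nonzero currents $\nu_\pm$ with $\Phi^k\nu_+=\rho_+\nu_+$, $\Phi^k\nu_-=\rho_-^{-1}\nu_-$ ($\rho_\pm>1$) and $\langle T_0,\nu_+\rangle=\langle T_0,\nu_-\rangle=0$; this requires nontrivial train-track arguments (blowing up a vertex, \cite{Lu3}) to guarantee a nontrivial elliptic subgroup of $T_0$ containing the invariant free factor. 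The contradiction then comes from the \emph{intersection form} $\langle\cdot,\cdot\rangle:\cvnbar\times\Curr(F_N)\to\R_{\ge0}$, which is the tool your proposal is missing entirely: one shows (Proposition~\ref{crucial-lemma}) that any $w$-fixed projective current in the forward limit region has eigenvalue $>1$, which forces $[\nu_+]$ into the forward and $[\nu_-]$ into the backward limit region, both near the eigencurrents of $\phi$; then continuity of the intersection form (Lemma~\ref{strictly-bigger}) gives $\langle T_0,\nu_+'\rangle+\langle T_0,\nu_-'\rangle>0$, contradicting $\langle T_0,\nu_\pm\rangle=0$. It is exactly this tree--current pairing that synchronizes the two ping-pongs; a ``support vs. ping-pong support'' comparison alone does not.

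For hyperbolicity, your argument is also unsound as stated: if $w$ fixed a conjugacy class $[g]$, the rational current $[\eta_g]$ would indeed be $w$-fixed and hence lie in the forward or backward limit region, but rational currents are \emph{dense} in $\Curr(F_N)$, so lying in a small neighborhood of $[\mu_\phi^+]$ does not force a current to be irrational — there is no contradiction from that alone. (One can salvage a ping-pong proof of atoroidality using the eigenvalue estimate of Proposition~\ref{crucial-lemma}, since $w\eta_g=\eta_g$ has eigenvalue $1$, not $>1$; but that estimate again rests on the intersection form.) The paper actually takes a different route for hyperbolicity: it establishes a ``3 out of 4'' length inequality (Corollary~\ref{cor:3-4}) from the current ping-pong, applies the Bestvina--Feighn combination theorem to make the mapping torus $F_N\rtimes\langle\Phi^n,\Psi^m\rangle$ word-hyperbolic, and deduces atoroidality of every nontrivial element from the absence of $\Z^2$ subgroups.

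In short: the two-ping-pong strategy and the role of stabilizer rigidity match the paper, but the decisive steps in your sketch — deducing iwip from ``ping-pong supports'' and atoroidality from ``irrational vs.\ rational fixed currents'' — do not actually work, and the paper's use of the geometric intersection form (Propositions~\ref{int}, \ref{prop:zero}, \ref{poles1}, \ref{crucial-lemma}, Lemma~\ref{strictly-bigger}) is the missing mechanism that makes the argument go through.
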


Thus the group $G$ in Theorem~\ref{thm:A} is ``purely hyperbolic iwip''.
It was already known by the results of \cite{BFH97} that one can ensure for every nontrivial element of $G$ as in Theorem~\ref{thm:A} to be a hyperbolic automorphism, and the new result here is the iwip property. Nevertheless, we also provide a complete and independent proof of the "purely hyperbolic'' property as well. 

By Corollary~\ref{cor:pair} for two hyperbolic iwips $\phi,\psi\in
\Out(F_N)$ the condition that they don't generate a virtually cyclic
subgroup is equivalent to the condition that they don't have any
common non-trivial powers, that is $\langle \phi\rangle\cap \langle
\psi\rangle=\{1\}$.

As a consequence of Theorem~\ref{thm:A}, we obtain
(c.f. Corollary~\ref{cor:sgp} below):

\begin{cor}
Let $G\subseteq \Out(F_N)$ be a non-virtually-cyclic
subgroup that contains a hyperbolic iwip.
Then $G$ contains a non-abelian
free subgroup where all
non-trivial elements are hyperbolic iwips.
\end{cor}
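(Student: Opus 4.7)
The plan is to reduce to Theorem~\ref{thm:A} by producing two hyperbolic iwips inside $G$ whose generated subgroup fails to be virtually cyclic. Let $\phi\in G$ be a hyperbolic iwip, as provided by the hypothesis. For every $g\in G$, the conjugate $\psi_g:=g\phi g\inv$ is again a hyperbolic iwip, since both properties are intrinsic and preserved under conjugation in $\Out(F_N)$. The candidate second generator will be one of these $\psi_g$.

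The crux is to find some $g\in G$ for which $\langle \phi,\psi_g\rangle$ is not virtually cyclic. I would argue by contradiction: assume $\langle \phi,g\phi g\inv\rangle$ is virtually cyclic for every $g\in G$. Applying Corollary~\ref{cor:pair} to the pair of hyperbolic iwips $\phi$ and $g\phi g\inv$ then yields a relation $\phi^a=g\phi^b g\inv$ with $a,b$ nonzero integers. Using the north-south dynamics of hyperbolic iwips on Thurston's compactification $\CVNbar$ of Outer space (recalled in the introduction), the two sides of this equation must have the same attracting and repelling fixed points in $\partial \CVN$, and these are, respectively, $[T_\phi^{\mathrm{sgn}(a)}]$ and $g\cdot [T_\phi^{\mathrm{sgn}(b)}]$. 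Matching up shows that $g$ must preserve the unordered pair $\{[T_\phi^+],[T_\phi^-]\}$ of fixed points of $\phi$ at the boundary of $\CVN$; hence all of $G$ is contained in the $\Out(F_N)$-stabilizer of this pair.

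It is a standard fact that the stabilizer in $\Out(F_N)$ of the pair of fixed trees of a hyperbolic iwip at $\partial \CVN$ is virtually cyclic; this is the $\Out(F_N)$-analogue of the classical result for pseudo-Anosov fixed-point pairs in the mapping class group, and in this setting can be extracted from the Bestvina--Feighn--Handel study of stabilizers of attracting laminations of iwips. The resulting inclusion of $G$ into a virtually cyclic subgroup contradicts our hypothesis, so some $g\in G$ with $\langle \phi,\psi_g\rangle$ not virtually cyclic must exist.

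With such a $g$ in hand, I apply Theorem~\ref{thm:A} to the pair $\phi,\psi_g\in G$ to obtain integers $m,n\ge 1$ such that $\langle \phi^m,\psi_g^n\rangle \subseteq G$ is free of rank two and every nontrivial element is a hyperbolic iwip; this is the required free subgroup. The main obstacle throughout the argument is the virtual cyclicity of the pair-stabilizer, which may require some care to extract cleanly from the literature; once it is accepted, the corollary follows at once from Theorem~\ref{thm:A} together with Corollary~\ref{cor:pair}.
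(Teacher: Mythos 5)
Your proposal is correct and follows essentially the same route as the paper: the dichotomy you derive by contradiction (either every conjugate $g\phi g^{-1}$ generates a virtually cyclic subgroup with $\phi$, forcing $G$ to preserve $\{[T_+(\phi)],[T_-(\phi)]\}$ and hence to be virtually cyclic, or some conjugate works) is precisely the content of Proposition~\ref{prop:dl}, which the paper simply invokes and you reprove inline using Corollary~\ref{cor:pair} and Proposition~\ref{prop:vc}. The only cosmetic differences are that the paper phrases the argument directly as a dichotomy rather than by contradiction, and writes the $\Out(F_N)$-action on $\cvnbar$ on the right, whereas you slip into left-action notation when writing the conjugated fixed trees.
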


Note that results similar to the statement of Theorem~\ref{thm:A} play an important role in the study of mapping class groups. Namely, for the mapping class group $\Mod(\Sigma)$ of a closed hyperbolic
surface $\Sigma$ it is interesting to find \emph{purely pseudo-Anosov}
subgroups of $\Mod(\Sigma)$,  i.e. subgroups where all non-trivial elements are pseudo-Anosov. One of the motivations in looking for purely pseudo-Anosov subgroups of $\Mod(\Sigma)$ is in trying to find new examples of word-hyperbolic extensions of $\pi_1(\Sigma)$ by groups other than infinite cyclic ones. An important early example of a non-abelian free purely pseudo-Anosov subgroup
$\Mod(\Sigma)$ is due to Mosher~\cite{Mosher} who used it to construct a word-hyperbolic extension of $\pi_1(\Sigma)$ by the free group $F_2$. Mosher's example was based on exploiting ping-pong considerations for the action of Schottky-type subgroups of $\Mod(\Sigma)$ on the boundary of the Teichmuller space; these types of subgroups are basic examples of convex-cocompact subgroups of mapping class groups. Another important source of purely pseudo-Anosov subgroups of mapping class groups comes from the work of Whittlesey~\cite{W}. This topic plays a key role in the theory of \emph{convex-cocompact} subgroups of mapping class groups~\cite{FM,Ha05,KeLe07,KeLe08}, and it is known that every such convex-cocompact subgroup is purely pseudo-Anosov.

A recent result of Handel and Mosher~\cite{HM} characterizes those subgroups of $\Out(F_N)$ that do not contain an iwip and shows that such subgroups have a rather special structure: if a subgroup $G\subseteq \Out(F_N)$ does not contain an iwip then there is a subgroup of finite index $H\subseteq G$ such that $H$ preserves the conjugacy class of a proper free factor of $F_N$.

Theorem~\ref{thm:A} can also be derived from a recent result of Bestvina and Feighn~\cite{BF08} about the existence of a hyperbolic graph with an $\Out(F_N)$-action, given a finite collection of independent hyperbolic iwips. Our proof is based on rather different and more direct arguments and we believe that it has substantial independent value, especially in view of the goal of developing the theory of convex-cocompactness for subgroups of $Out(F_N)$. Note also, that a new paper of Clay and Pettet~\cite{CP} provides a proof of a related statement to our Theorem~\ref{thm:A}: they prove that given two "sufficiently transverse" Dehn twists $\phi,\psi\in \Out(F_N)$, for some sufficiently large $m,n\ge 1$ the subgroup $\langle \phi^m, \psi^n\rangle\le \Out(F_N)$ is free of rank two and every nontrivial element of that subgroup, except those that are conjugate to powers of $\phi^m$, $\psi^n$, is a hyperbolic iwip. This result of Clay and Pettet and our Theorem~\ref{thm:A} are logically independent and the proofs are very different.
Theorem~\ref{thm:A} was applied and pushed further in a new paper of Hamenst\"adt~\cite{Ha09}. 

\smallskip

We establish Theorem~\ref{thm:A} via studying the dynamics of the action of $\Out(F_N)$ and of its subgroups on the space $\cvnbar$ of very small isometric $\mathbb R$-tree actions of $F_N$ (which is the closure of the Outer space $\cvn$ in the length function topology) and on the space $\Curr(F_N)$ of \emph{geodesic currents} on $F_N$. A geodesic current is a measure-theoretic
analogue
of the notion of a conjugacy class in a free group (or a free homotopy class of a closed curve on the surface). Geodesic currents in the context of hyperbolic surfaces were introduced by Bonahon who used them to study the geometry of the Teichm\"uller space~\cite{Bo86,Bo88}. In the context of free groups geodesic currents were first introduced in
the Ph.D.-thesis
of Reiner Martin~\cite{Martin} and later re-introduced and studied systematically by Kapovich~\cite{Ka1,Ka2,Ka3}, Kapovich-Lustig~\cite{KL1,KL2,KL3} and others~\cite{Fra,KN}. Recent applications of geodesic currents include results related to free group
analogues
of the curve complex (Kapovich-Lustig~\cite{KL2}, Bestvina-Feighn~\cite{BF08}) and to bounded cohomology of $\Out(F_N)$ and of its subgroups (Hamenstadt~\cite{Ha}). A key component in these results, as well as in the proofs of the main results of the present
paper, is the \emph{geometric intersection form}.
The latter
pairs very small $F_N$-trees and geodesic currents and
shares important features in common with Bonahon's notion of a geometric intersection number between two geodesic currents. This intersection form was initially constructed in \cite{Ka2}, \cite{Lu1}
for the ordinary unprojectivized Outer space $\cvn$ and recently extended
in our joint paper~\cite{KL2} to the closure $\cvnbar$ of $\cvn$.

We start by exploiting the fact that a hyperbolic iwip $\phi\in
\Out(F_N)$ acts with a ``North-South" dynamics on both the
projectivization $\CVNbar$ of $\cvnbar$ and on the projectivization
$\PCurr$ of $\Curr(F_N)$. In the process we introduce,
using the intersection form, natural ``height functions" associated to
$\phi$ on each of $\CVNbar$ and $\PCurr$, which provide
useful stratifications of these spaces. As a corollary of the
``North-South'' dynamics for the action of hyperbolic iwips on $\PCurr$, we obtain a new
proof (Theorem~\ref{thm:hyperb}) of the following result of Bestvina, Feighn and Handel~\cite{BFH97}:
If $\phi,\psi\in \Out(F_N)$ are as in Theorem~\ref{thm:A} and
$\Phi,\Psi\in \Aut(F_N)$ are their representatives in $Aut(F_N)$, then for
sufficiently high powers $\Phi^n,\Psi^m$ of $\Phi$ and $\Psi$, the semi-direct product
$G_{n,m}=F_N\rtimes \langle \Phi^n,\Psi^m\rangle$ is word-hyperbolic.
In this case the
subgroups $\langle \phi^n,\psi^m\rangle\subseteq \Out(F_N)$ and $\langle
\Phi^n,\Psi^m\rangle\subseteq \Aut(F_N)$ are free of rank two
and, as noted in Remark~\ref{purely-hyperbolic} below, the hyperbolicity
of $G_{n,m}$ already implies that every non-trivial element of $\langle
\phi^n,\psi^m\rangle$ is hyperbolic.

Establishing the ``purely iwip'' part of Theorem~\ref{thm:A} requires a much more delicate analysis and new tools and ideas, in order to rule out the existence of non-trivial reducible elements in free subgroups of $\Out(F_N)$ generated by two large powers of hyperbolic iwips. In particular, we exploit
the interplay between the
right ping-pong action of such subgroups on $\cvnbar$ and their
simultaneous left ping-pong action on $\PCurr$.
Thus ping-pong arguments play a key role in the proof
Theorem~\ref{thm:A}. Note that ping-pong type arguments, in different
settings, are also important in the proof of the Tits Alternative for
$\Out(\FN)$ by Bestvina, Feighn and Handel~\cite{BFH97,BFH00,BFH05}. Also,
ping-pong arguments for iwips and the existence of Schottky-type free
subgroups in $\Out(F_N)$ yielded by such arguments, are a key tool in
the proof by Bridson and de la Harpe~\cite{BH} that $\Out(F_N)$ is
$C^\ast$-simple for $N\ge 3$. Both \cite{BFH97} and \cite{BH} use
the ping-pong arguments (and their consequences) for the action of
$\Out(F_N)$ on the set of the ``legal'' or ``stable'' laminations
associated to all iwip elements of $\Out(F_N)$ and exploit the fact
that this set admits an $\Out(F_N)$-equivariant embedding in
$\CVNbar$.

A careful analysis of the proof of Theorem~\ref{thm:A}
shows that its
conclusion holds for an arbitrary finite number
of hyperbolic iwips:

\begin{cor}
\label{more-than-two}
 Let $\phi_1,\dots, \phi_k\in \Out(F_N)$
 be hyperbolic iwips such that
 for every $1\le i<j\le k$ the subgroup $\langle
\phi_i,\phi_j\rangle$ is not virtually cyclic.
Then there exist $n_1, \dots, n_k \ge 1$ such that the subgroup $G=\langle \phi_1^{n_1}, \dots,\phi_k^{n_k}\rangle \subseteq \Out(F_N)$ is free of rank $k$, and such that every nontrivial element of $G$ is again a hyperbolic iwip.
\end{cor}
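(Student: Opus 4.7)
The plan is to follow the proof of Theorem~\ref{thm:A} almost verbatim, replacing the two-player ping-pong by a $k$-player version simultaneously on $\CVNbar$ and on $\PCurr$. Each hyperbolic iwip $\phi_i$ acts with North-South dynamics on both spaces, producing an attracting/repelling pair of projectivized trees $T_i^\pm\in\CVNbar$ and an attracting/repelling pair of projectivized currents $[\mu_i^\pm]\in\PCurr$. The pairwise non-virtually-cyclic hypothesis, combined with Corollary~\ref{cor:pair}, implies that no two of $\phi_i,\phi_j$ share a nontrivial power; since the fixed points in $\CVNbar$ and $\PCurr$ of a hyperbolic iwip are determined (up to swap) by its maximal cyclic subgroup, this forces all $2k$ trees $T_i^\pm$ to be pairwise distinct, and likewise for the $2k$ currents $[\mu_i^\pm]$.

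Given this separation, I would choose pairwise disjoint open neighborhoods $U_i^\pm\subseteq\CVNbar$ of $T_i^\pm$ and $V_i^\pm\subseteq\PCurr$ of $[\mu_i^\pm]$. By North-South dynamics, one can then choose $n_i\ge 1$ large enough that each $\phi_i^{n_i}$ maps the complement of $U_i^-$ into $U_i^+$ on the tree side, and maps the complement of $V_i^-$ into $V_i^+$ on the current side (and symmetrically for $\phi_i^{-n_i}$). The standard $k$-player ping-pong lemma then shows that $G=\langle\phi_1^{n_1},\dots,\phi_k^{n_k}\rangle\subseteq\Out(\FN)$ is free of rank $k$, and that every nontrivial reduced word $g\in G$ has the property that, for a generic tree $T$, the image $g\cdot T$ lies in the attracting neighborhood $U_i^{\varepsilon}$ corresponding to the leading letter $\phi_i^{\pm n_i}$ of $g$, with the analogous statement for a generic current on the $\PCurr$ side.

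To show each nontrivial $g\in G$ is a hyperbolic iwip, the arguments used for $k=2$ in the proof of Theorem~\ref{thm:A} carry over without essential change. The hyperbolic (atoroidal) part follows from Brinkmann's criterion together with the ping-pong dynamics on $\PCurr$: any nontrivial conjugacy class periodic under $g$ would give a $g$-fixed rational current, which is excluded because iterating $g$ pushes generic currents into one of the attracting neighborhoods $V_i^+$, contradicting periodicity. For the iwip property, one uses the height-function stratifications of $\CVNbar$ and $\PCurr$ associated to the individual $\phi_i$'s via the geometric intersection form, together with the interplay between the right ping-pong action on $\cvnbar$ and the left ping-pong action on $\PCurr$, to rule out that any positive power of $g$ fixes the conjugacy class of a proper free factor of $\FN$; such an invariant factor would produce an invariant finite subset of both spaces, incompatible with the $k$-fold disjointness arranged above.

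The main obstacle is the same as in Theorem~\ref{thm:A}: excluding nontrivial reducible (but still hyperbolic) elements of $G$. This is the step that genuinely uses the simultaneous two-space ping-pong and the height-function stratification, and it is here that the bookkeeping becomes slightly heavier in the $k$-generator setting. However, because the entire argument is run pair-by-pair in the generating set—every estimate involving an element $g\in G$ only depends on its leading and trailing letters and on the attracting/repelling data of the corresponding generators—the $k=2$ proof applies verbatim to each such pair, and the pairwise disjointness of the $U_i^\pm$ and $V_i^\pm$ suffices to glue these estimates together. This is why no genuinely new idea beyond those in the proof of Theorem~\ref{thm:A} is needed for Corollary~\ref{more-than-two}.
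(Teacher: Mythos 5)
Your overall strategy---replace the two-player ping-pong on $\CVNbar$ and $\PCurr$ by a $k$-player version, using pairwise disjointness of the $2k$ fixed trees and the $2k$ fixed currents---is the same one the paper intends, and the claim that pairwise non-virtual-cyclicity (via Corollary~\ref{cor:pair}) gives the needed separation is correct. However, two of your intermediate explanations do not describe a working argument as stated, and a key bookkeeping point is glossed over.

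First, your hyperbolicity argument is off. A conjugacy class $[a]$ periodic under $g$ gives a current $\eta_a$ that is \emph{exactly} $g^p$-fixed, hence a genuine fixed point of $g^p$ in $\PCurr$; it lies in the forward or backward limit region of $g^p$ and ``iterating $g$'' does not move it at all, so there is no contradiction with ``generic currents being pushed into $V_i^+$.'' One correct route is the paper's: generalize Lemma~\ref{lem:3-4} and Corollary~\ref{cor:3-4} to a ``$(2k-1)$ out of $2k$'' condition (which holds because any nonzero current lies in at most one of the $2k$ pairwise disjoint repelling neighborhoods), then apply the Bestvina--Feighn combination theorem to deduce that the mapping torus $G_{n_1,\dots,n_k}$ is word-hyperbolic, and conclude atoroidality of all nontrivial elements as in Remark~\ref{purely-hyperbolic}. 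Another correct route would be to apply (a $k$-generator version of) Proposition~\ref{crucial-lemma} to $g^p$: a fixed rational current in the forward limit region forces $\lambda>1$, but $\lambda=1$ for a counting current, a contradiction.

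Second, your iwip argument (``an invariant free factor would produce an invariant finite subset of both spaces'') is not the mechanism and would not be checkable as written. The actual argument via Proposition~\ref{poles1} is: if some nontrivial $g\in G$ is atoroidal but not iwip, one obtains $T_0\in\cvnbar$ and eigencurrents $\nu_\pm$ with $\langle T_0,\nu_+\rangle=\langle T_0,\nu_-\rangle=0$; one then conjugates and rotates $g^k$ to a word $w=\phi_i^m w'\phi_i^n$ starting and ending in a large power of the \emph{same} generator $\phi_i$, applies Proposition~\ref{crucial-lemma} (and its inverse) to locate $[\nu_+]\in\Pr U_+(\phi_i)$ and $[\nu_-]\in\Pr U_-(\phi_i)$, and contradicts Lemma~\ref{strictly-bigger}. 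The step you skip is precisely the cyclic rotation: because $G$ acts on $\CVNbar$ on the right and on $\PCurr$ on the left, the two ping-pong estimates involve the word's opposite ends, and Proposition~\ref{crucial-lemma} requires both ends to be large powers of one and the same $\phi_i$. This is why the proof replaces $\phi_0,\psi_0$ by powers $\phi=\phi_0^{M_0},\psi=\psi_0^{M_0}$ and then works inside $G_1=\langle\phi^M,\psi^M\rangle$: any cyclically reduced word in $\phi^{\pm M},\dots$ contains some $\phi_i^{\pm M}$ as a syllable and can be rotated to split that syllable across the two ends. The same device works for $k$ generators, but it has to be said; ``leading and trailing letters'' alone is not enough since they may come from different generators.
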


An interesting goal for future work would be to develop a theory of
``convex-cocompact" subgroups of $\Out(F_N)$ that resembles the theory
of convex-cocompact subgroups of mapping class groups.
A first step for such a theory is given in
\cite{KL5}.
We informally
call the free subgroups of $\Out(\FN)$ generated by two large powers
of hyperbolic iwips, that appear in the conclusion of
Theorem~\ref{thm:A}, \emph{Shottky-type subgroups} of $\Out(F_N)$. We
believe that Schottky-type subgroups should provide basic examples of
convex-cocompact subgroups of of $\Out(F_N)$ and we hope that
analyzing their properties will lead to a successful formulation of
the convex-cocompactness theory in the $\Out(F_N)$ context.

\smallskip
\noindent
{\em Acknowledgement:}

The authors are grateful to Chris Leininger for useful conversations.
The authors also thank the referee for a careful reading of the paper and for many helpful suggestions.

\section{Outer space and the space of geodesic currents}

We give here only a brief overview of basic facts related to Outer space and the space of geodesic currents. We refer the reader to~\cite{CV,Ka2} for more detailed background information.

\subsection{Outer space} Let $N\ge 2$. The \emph{unprojectivized Outer space} $\cvn$ consists of all minimal free and discrete isometric actions on $F_N$ on $\mathbb R$-trees (where two such actions are considered equal if there exists an $F_N$-equivariant isometry between the corresponding trees). There are several different topologies on $\cvn$ that are known to coincide, in particular the equivariant Gromov-Hausdorff convergence topology and the so-called \emph{axis} or \emph{length function} topology. Every $T\in \cvn$ is uniquely determined by its \emph{translation length function} $||.||_T:F_N\to\mathbb R$, where $||g||_T$ is the translation length of $g$ on $T$. Two trees $T_1,T_2\in\cvn$ are close if the functions $||.||_{T_1}$ and $||.||_{T_1}$ are close pointwise on a large ball in $F_N$. The closure $\cvnbar$ of $\cvn$ in either of these two topologies is well-understood and known to consists precisely of all the so-called \emph{very small} minimal isometric actions of $F_N$ on $\mathbb R$-trees, see \cite{BF93} and \cite{CL}.
The outer automorphism group $\Out(F_N)$ has a natural continuous right action on $\cvnbar$ (that leaves $\cvn$ invariant) given at the level of length functions as follows: for $T\in \cvnbar$ and $\phi\in \Out(F_N)$ we have $||g||_{T\phi}=||\Phi(g)||_T$,
with
$g\in F_N$
and
$\Phi\in \Aut(F_N)$ representing $\phi \in \Out(F_n)$.
In terms of tree actions, $T\phi$ is equal to $T$ as a metric space, but the action of $F_N$ is modified
to give
$g\underset{T\phi}{\cdot} x=\Phi(g)\underset{T}{\cdot} x$,
 where $x\in T$, $g\in F_N$ are arbitrary and where
$\Phi\in \Aut(F_N)$
represents as before
of the outer automorphism $\phi$.
The \emph{projectivized Outer space} $\CVN=\mathbb P\cvn$ is defined as the quotient $\cvn/\sim$ where for $T_1\sim T_2$ whenever $T_2=cT_1$ for some $c>0$. One similarly defines the projectivization $\CVNbar=\mathbb P\cvnbar$ of $\cvnbar$ as $\cvnbar/\sim$ where $\sim$ is the same as above. The space $\CVNbar$ is compact and contains $\CVN$ as a dense $\Out(F_N)$-invariant subset. The compactification $\CVNbar$ of $\CVN$ is a free group analogue of the Thurston compactification of the Teichm\"uller space. For $T\in \cvnbar$ its $\sim$-equivalence class is denoted by $[T]$, so that $[T]$ is the image of $T$ in $\CVNbar$. The unprojectivized Outer space $\cvn$ contains an $\Out(F_N)$-invariant closed subspace $cv_N^1$ which is $\Out(F_N)$-equivariantly homeomorphic to $\CVN$. Namely, $cv_N^1$ consists of all trees $T\in \cvn$ such that the quotient metric graph $T/F_N$ has volume one (that is, the sum of the lengths of its edges is equal to one). Many sources identify $\CVN$ and $cv_N^1$ but we will distinguish these objects in the present paper.

\subsection{Geodesic currents} Let $\partial^2F_N:=\{ (x,y)| x,y\in \partial F_N, x\ne y\}$. The action of $F_N$ by translations on its hyperbolic boundary $\partial F_N$ defines a natural diagonal action of $F_N$ on $\partial^2 F_N$. A \emph{geodesic current} on $F_N$ is a positive Radon measure on $\partial^2 F_N$ that is $F_N$-invariant and is also invariant under the ``flip" map $\partial^2 F_N\to \partial^2 F_N$, $(x,y)\mapsto (y,x)$. The space $\Curr(F_N)$ of all geodesic currents on $F_N$ has a natural $\mathbb R_{\ge 0}$-linear structure and is equipped with the
weak*-topology
of pointwise convergence on continuous functions. Every point $T\in \cvn$ defines a \emph{simplicial chart} on $\Curr(F_N)$ which allows one to think about geodesic currents as systems of nonnegative weights satisfying certain Kirchhoff-type equations; see \cite{Ka2} for details. We briefly recall the simplicial chart construction for the case where $T_A\in \cvn$ is the Cayley tree corresponding to a free basis $A$ of $F_N$. For a nondegenerate geodesic segment $\gamma=[p,q]$ in $T_A$ the \emph{two-sided cylinder} $Cyl_A(\gamma)\subseteq \partial^2 F_N$ consists of all $(x,y)\in \partial^2 F_N$ such that the geodesic from $x$ to $y$ in $T_A$ passes through $\gamma=[p,q]$. Given a nontrivial freely reduced  word $v\in F(A)=F_N$ and a current $\mu\in \Curr(F_N)$, the ``weight" $\langle v,\mu\rangle_A$ is defined as $\mu(Cyl_A(\gamma))$ where $\gamma$ is any segment in the Cayley graph $T_A$ labelled by $v$ (the fact that the measure $\mu$ is $F_N$-invariant implies that a particular choice of $\gamma$ does not matter). A current $\mu$ is uniquely determined by a family of weights $\big(\langle v,\mu\rangle_A\big)_{v\in F_N-\{1\}}$. The
weak*-topology
on $\Curr(F_N)$ corresponds to pointwise convergence of the weights for every $v\in F_N, v\ne 1$.

There is a natural left action of $\Out(F_N)$ on $\Curr(F_N)$ by continuous linear transformations. Specifically, let $\mu\in \Curr(F_N)$, $\phi\in \Out(F_N)$ and let $\Phi\in \Aut(F_N)$ be a representative of $\phi$ in $Aut(F_N)$. Since $\Phi$ is a quasi-isometry of $F_N$, it extends to a homeomorphism of $\partial F_N$ and, diagonally, defines a homeomorphism of $\partial^2 F_N$. The measure $\phi\mu$ on $\partial^2 F_N$ is defined as follows. For a Borel subset $S\subseteq \partial^2 F_N$ we have $(\phi\mu)(S):=\mu(\Phi^{-1}(S))$. One then checks that $\phi\mu$ is a current and that it does not depend on the choice of a representative $\Phi$ of $\phi$.

For every $g\in F_N, g\ne 1$ there is an associated \emph{counting current} $\eta_g\in \Curr(F_N)$. If $A$ is a free basis of $F_N$ and the conjugacy class $[g]$ of $g$ is realized by a ``cyclic word" $W$ (that is a cyclically reduced word in $F(A)$ written on a circle with no specified base-vertex), then for every nontrivial freely reduced word $v\in F(A)=F_N$ the weight $\langle v,\eta_g\rangle_A$ is equal to the total number of occurrences of $v^{\pm 1}$ in $W$ (where an occurrence of $v$ in $W$ is a vertex on $W$ such that we can read $v$ in $W$ clockwise without going off the circle). We refer the reader to \cite{Ka2} for a detailed exposition on the topic. By construction the counting current $\eta_g$ depends only on the conjugacy class $[g]$ of $[g]$ and it also satisfies $\eta_g=\eta_{g^{-1}}$. One can check~\cite{Ka2} that for $\phi\in \Out(F_N)$ and $g\in F_N, g\ne 1$ we have $\phi\eta_g=\eta_{\phi(g)}$. Scalar multiples $c\eta_g\in \Curr(F_N)$, where $c\ge 0$, $g\in F_N, g\ne 1$ are called \emph{rational currents}. A key fact about $\Curr(F_N)$ states that the set of all rational currents is dense in $\Curr(F_N)$.

The \emph{space of projectivized geodesic currents} is defined as $\PCurr=\Curr(F_N)-\{0\}/\sim$ where $\mu_1\sim\mu_2$ whenever there exists $c>0$ such that $\mu_2=c\mu_1$. The $\sim$-equivalence class of $\mu\in \Curr(F_N)-\{0\}$ is denoted by $[\mu]$.  The action of $\Out(F_N)$ on $\Curr(F_N)$ descends to a continuous action of $\Out(F_N)$ on $\PCurr$. The space $\PCurr$ is compact and the set $\{[\eta_g]\: g\in F_N, g\ne 1 \}$ is a dense subset of it.

\subsection{Intersection form}

In \cite{KL2} we constructed a natural geometric \emph{intersection form} which pairs trees and currents:

\begin{prop}\label{int}\cite{KL2}
Let $N\ge 2$. There exists a unique continuous map $\langle\ ,\ \rangle : \cvnbar \times \Curr(F_N)\to \mathbb R_{\ge 0}$ with the following properties:
\begin{enumerate}
\item We have $\langle T, c_1\mu_1+c_2\mu_2\rangle=c_1\langle T,\mu_1\rangle+c_2\langle T,\mu_2\rangle$ for any $T\in \cvnbar$, $\mu_1,\mu_2\in \Curr(F_N)$, $c_1,c_2\ge 0$.
\item We have $\langle cT, \mu\rangle=c\langle T,\mu\rangle$ for any $T\in\cvnbar$, $\mu\in \Curr(F_N)$ and $c\ge 0$.
\item We have $\langle T\phi,\mu\rangle=\langle T, \phi\mu\rangle$ for any $T\in\cvnbar$, $\mu\in \Curr(F_N)$ and $\phi\in \Out(F_N)$.
\item We have $\langle T, \eta_g\rangle=||g||_T$ for any $T\in \cvnbar$ and $g\in F_N, g\ne 1$.
\end{enumerate}
\end{prop}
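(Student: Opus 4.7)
The plan is to settle uniqueness first by density, then construct the pairing on $\cvn \times \Curr(\FN)$ via the simplicial chart picture, and finally extend continuously across the boundary to $\cvnbar \times \Curr(\FN)$. Uniqueness is formal: axioms (1), (2), and (4) force $\langle T, c\eta_g\rangle = c\|g\|_T$ for every $T \in \cvnbar$, every $c \ge 0$, and every $g \in \FN\setminus\{1\}$. Since nonnegative finite linear combinations of the $\eta_g$ are dense in $\Curr(\FN)$, continuity in the current variable uniquely determines any pairing satisfying the axioms.

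For existence on $\cvn$, I would use the fact that each $T \in \cvn$ is the universal cover of a finite metric graph $\Gamma = T/\FN$. For each edge $e \in E\Gamma$ with a chosen lift $\tilde e \subset T$, the two-sided cylinder $\mathrm{Cyl}_T(\tilde e) \subset \partial^2\FN$ of bi-infinite geodesics crossing $\tilde e$ is Borel and, by $\FN$-invariance of $\mu$, its measure $\mu(\mathrm{Cyl}_T(\tilde e))$ does not depend on the choice of lift. Setting
\[
\langle T, \mu \rangle \;:=\; \tfrac{1}{2}\sum_{e \in E\Gamma}\ell_T(e)\,\mu\bigl(\mathrm{Cyl}_T(\tilde e)\bigr),
\]
where the factor $1/2$ absorbs flip-invariance, I would check axioms (1)--(4) directly: (1) and (2) are immediate from linearity of measures and scaling of lengths, (3) follows by tracking how a lift $\Phi \in \Aut(\FN)$ of $\phi$ relabels oriented edges under the pushforward definition of $\phi\mu$, and (4) on the Cayley tree $T_A$ reduces to the combinatorial description of $\langle v, \eta_g\rangle_A$ as a count of occurrences of $v^{\pm 1}$ in the cyclic word of $g$. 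Joint continuity on $\cvn \times \Curr(\FN)$ follows since both $\ell_T(e)$ and the cylinder measures vary continuously.

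Extending to $\cvnbar$ is the substantive step. For $T \in \cvnbar$, pick $T_n \in \cvn$ with $T_n \to T$ and set $\langle T, \mu \rangle := \lim_n \langle T_n, \mu \rangle$. On rational currents $c\eta_g$ the limit is $c\|g\|_{T_n} \to c\|g\|_T$ by the length function topology on $\cvnbar$, and on finite nonnegative linear combinations of counting currents it exists by linearity. For general $\mu$ one approximates by such combinations and must interchange two limits; this is where I expect the main obstacle to lie. Boundary trees in $\cvnbar \setminus \cvn$ can have nontrivial arc stabilizers, so the simplicial chart formula does not apply verbatim and a uniform estimate is needed. Concretely, I would aim for a bound of the form $\langle T, \mu \rangle \le C(K)\,\Sigma_L(\mu)$ for $T$ ranging over any compact $K \subset \cvnbar$, where $\Sigma_L(\mu) = \sum_{|v|_A \le L}\langle v,\mu\rangle_A$ is a continuous seminorm on $\Curr(\FN)$ and $L = L(K)$ is a cutoff depending only on $K$. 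Such a bound says that long-cylinder contributions to the pairing are uniformly negligible; I would try to produce it by pulling back edge-decompositions from approximating simplicial trees $T_n$ and pigeonholing on geodesic segments of bounded combinatorial complexity in $T$. Once this uniformity is in hand the extension is well-defined and continuous, and axioms (1)--(4) pass to the limit from the $\cvn$ case.
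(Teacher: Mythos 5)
First, a remark on scope: the paper you are working from does not itself prove Proposition~\ref{int} — it quotes it as a black box from \cite{KL2}, where the continuous extension of the pairing across $\partial\cvnbar$ is the main technical content. So the comparison below is between your proposal and the proof in \cite{KL2}, not against anything in the present paper.

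Your uniqueness argument and your construction on the open Outer space $\cvn$ are both correct and coincide with the literature. Uniqueness by density of $\{c\eta_g\}$ together with continuity in the second variable is the standard argument, and the cylinder-sum formula
$\langle T,\mu\rangle = \tfrac12 \sum_{e}\ell_T(e)\,\mu(\mathrm{Cyl}_T(\tilde e))$
is precisely the pairing on $\cvn\times\Curr(\FN)$ introduced in \cite{Ka2}; axioms (1)--(4) there are checked essentially as you outline, and axiom (3) is a bookkeeping exercise once one is careful about the right-versus-left action convention (which the paper flags explicitly after Proposition~\ref{int}).

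The genuine gap is in the extension to $\cvnbar$, which you correctly identify as the substantive step but only sketch. Two problems with the sketch. First, defining $\langle T,\mu\rangle := \lim_n\langle T_n,\mu\rangle$ requires well-definedness, i.e.\ independence of the approximating sequence $T_n\in\cvn$, and nothing in the sketch addresses this. Second, the proposed uniform bound $\langle T,\mu\rangle \le C(K)\,\Sigma_L(\mu)$, with $L$ a fixed cutoff depending only on the compact $K\subseteq\cvnbar$, is not obviously available: as $T_n$ degenerates toward the boundary the quotient graphs $T_n/\FN$ change combinatorial type and their edges shrink, so the cylinders $\mathrm{Cyl}_{T_n}(\tilde e)$ correspond to words of unbounded length, and controlling the resulting sum by weights of words of length $\le L$ requires exactly the kind of quantitative decay one is trying to establish. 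The pigeonholing you gesture at is where the entire difficulty sits, and it is not clear it can be carried out in that form.

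The route actually taken in \cite{KL2} is different and sidesteps the double limit. Rather than approximating from the interior, one fixes a simplicial chart $T_A\in\cvn$ and, for an arbitrary very small tree $T\in\cvnbar$, chooses an $\FN$-equivariant Lipschitz map $h\colon T_A\to\overline T$. Such a map has a finite bounded back-tracking constant $\mathrm{BBT}(h)$, and this constant is the uniform control that replaces your seminorm estimate: it lets one write a direct formula for $\langle T,\mu\rangle$ in terms of the simplicial chart and then prove joint continuity and agreement with the cylinder-sum formula on $\cvn$. The point is that the needed uniformity is a geometric statement about equivariant maps into very small $\R$-trees (including trees with nontrivial arc stabilizers) rather than a combinatorial estimate on cylinder seminorms, and it is established in \cite{KL2} by quite different means than approximation from $\cvn$. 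To complete your extension step you would need to import or reprove this BBT machinery; without it the sketch does not close.
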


Note that here we work with the \emph{right} action of $\Out(F_N)$ on $\cvnbar$, which is related to the \emph{left} action  of $\Out(F_N)$ on $\cvnbar$ considered in \cite{KL2} via $T\phi= \phi^{-1}T$, where $T\in \cvnbar$, $\phi\in \Out(F_N)$. This accounts for the difference in how part (3) of Proposition~\ref{int} is stated above compared with the formulation of the main result in \cite{KL2}.

\section{Stabilizers of eigentrees and eigencurrents}\label{sect:stab}

\begin{defn}\label{defn:irr}
An element $\phi\in \Out(F_N)$ is \emph{reducible} if there
exists a free product decomposition $F_N=C_1\ast\dots C_k\ast F'$,
where $k\ge 1$ and $C_i\ne \{1\}$, such that $\phi$ permutes the
conjugacy classes of subgroups $C_1,\dots, C_k$ in $F_N$. An element
$\phi\in \Out(F_N)$ is called \emph{irreducible} if it is not
reducible. An element $\phi\in \Out(F_N)$ is said to be
\emph{irreducible with irreducible powers} or an \emph{iwip} for
short, if for every $n\ge 1$
the power
$\phi^n$ is irreducible (sometimes such
automorphisms are also called \emph{fully irreducible}).
This is equivalent to the property that no positive power of $\phi$ fixes a conjugacy class of a proper free factor of $\FN$

An outer automorphism $\phi\in \Out(F_N)$ is {\em hyperbolic} or {\em atoroidal} if no positive power of $\phi$ fixes the conjugacy class of a nontrivial element of $\FN$. 

An automorphism $\Phi\in \Aut(F_N)$, is called  {\em hyperbolic} or {\em atoroidal}  if the outer automorphism $\phi\in \Out(F_N)$ defined by $\Phi$ is atoroidal.
\end{defn}

A result of Brinkmann~\cite{Br}, together with the Combination Theorem of Bestvina and Feighn~\cite{BF92}, implies that $\Phi\in \Aut(F_N)$ is atoroidal if and only if the mapping torus group $F_N\rtimes_\Phi \mathbb Z$ is word-hyperbolic.

The following result is due to Reiner Martin~\cite{Martin}:
\begin{prop}\label{prop:rm}
Let $\phi\in \Out(F_N)$ be a hyperbolic iwip. Then there exist unique $[\mu_+],[\mu_-]\in \PCurr$ with the following properties:
\begin{enumerate}
\item The elements $[\mu_+],[\mu_-]\in \PCurr$ are the only fixed points of $\phi$ in $\PCurr$.
\item For any $[\mu]\ne [\mu_-]$ we have $\lim_{n\to\infty} \phi^n[\mu]=[\mu_+]$ and for any $[\mu]\ne [\mu_+]$ we have $\lim_{n\to\infty} \phi^{-n}[\mu]=[\mu_-]$.
\item We have $\phi\mu_+=\lambda_+\mu_+$ and $\phi^{-1}\mu_-=\lambda_-\mu_-$ where $\lambda_+>1$ and $\lambda_->1$. Moreover $\lambda_+$ is the Perron-Frobenius eigenvalue of any train-track representative of $\phi$ and $\lambda_-$ is the Perron-Frobenius eigenvalue of any train-track representative of $\phi^{-1}$.
\end{enumerate}
\end{prop}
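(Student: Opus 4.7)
\emph{Strategy.} I will construct $\mu_+$ as the Perron--Frobenius eigencurrent of a train-track representative of $\phi$, obtain $\mu_-$ symmetrically from $\phi^{-1}$, then reduce uniqueness and North--South dynamics to a Perron--Frobenius analysis of the action of $\phi$ on the finite-dimensional vectors of edge-path weights in the simplicial chart on $\Curr(F_N)$ associated to the train-track graph.

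\emph{Construction of $\mu_\pm$.} Since $\phi$ is iwip, Bestvina--Handel supplies an irreducible train-track representative $f\colon\Gamma\to\Gamma$ whose transition matrix $M_f$ is Perron--Frobenius; hyperbolicity of $\phi$ forces $\phi$ to have infinite order, so that the PF eigenvalue $\lambda_+$ strictly exceeds $1$. The attracting lamination $\Lambda_+\subset\partial^2 F_N$ of $\phi$ is realised as the closure of the pairs of endpoints of bi-infinite leaves obtained by iterating $f$ on an edge and lifting to $\widetilde\Gamma$; weighting its leaves via the positive left PF eigenvector of $M_f$ produces an $F_N$-invariant, flip-invariant positive Radon measure $\mu_+$ on $\partial^2 F_N$, and the eigenvector identity $v M_f=\lambda_+ v$ translates directly into $\phi\mu_+=\lambda_+\mu_+$. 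Applying the same construction to the hyperbolic iwip $\phi^{-1}$ yields $\mu_-\in\Curr(F_N)$ with $\phi^{-1}\mu_-=\lambda_-\mu_-$ for some $\lambda_->1$, and hence $\phi\mu_-=\lambda_-^{-1}\mu_-$, so both $[\mu_\pm]$ are $\phi$-fixed in $\PCurr$.

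\emph{Uniqueness and North--South dynamics.} Suppose $[\mu]\in\PCurr$ is $\phi$-fixed with $\phi\mu=c\mu$, $c>0$. In the simplicial chart associated to $T_\Gamma$, the weight vector $w_n(\mu)=(\langle\gamma,\mu\rangle_{T_\Gamma})_{|\gamma|=n}$ evolves under $\phi$ by a nonnegative linear map whose spectrum is governed by $M_f$ and which becomes primitive for large $n$. The equation $\phi w_n(\mu)=c\,w_n(\mu)$ thus forces $c$ to be an eigenvalue admitting a nonnegative eigenvector, and Perron--Frobenius restricts $c$ to $\{\lambda_+,\lambda_-^{-1}\}$ with a unique positive eigenvector in each case; Kirchhoff consistency across $n$ then pins $[\mu]$ to either $[\mu_+]$ or $[\mu_-]$. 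For the dynamics, take $[\mu]\ne[\mu_-]$ and decompose $w_n(\mu)$ along the spectral subspaces of the iteration operator: the assumption $[\mu]\ne[\mu_-]$ is precisely what guarantees a nonzero component along the dominant PF eigenline, so that PF asymptotics give $\phi^n w_n(\mu)/\lambda_+^n$ converging to a positive multiple of $w_n(\mu_+)$; promoting this to weak-$\ast$ convergence of currents yields $\phi^n[\mu]\to[\mu_+]$, and the statement for $\phi^{-n}$ is symmetric.

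\emph{Main obstacle.} The hardest step is the promotion of finite-dimensional Perron--Frobenius convergence of edge-weight vectors to genuine weak-$\ast$ convergence of the currents $\phi^n[\mu]$: cylinder weights $\langle\gamma,\cdot\rangle_{T_\Gamma}$ for arbitrarily long paths $\gamma$ must be controlled uniformly in $n$, which one achieves by exploiting the self-similar structure inherited from iterating $f$ on $\Gamma$, together with the fact that for a hyperbolic iwip both laminations $\Lambda_\pm$ are minimal and fill $\Gamma$, so that no pathological component of $\mu$ escapes detection by the edge-path chart and no nonnegative eigenvector of $M_f$ other than the two produced above can arise.
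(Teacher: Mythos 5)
The paper does not prove Proposition~\ref{prop:rm} at all; it explicitly attributes it to Reiner Martin's Ph.D.\ thesis~\cite{Martin} and states it without proof. So there is no ``paper proof'' to compare against, and your task was effectively to reconstruct Martin's argument from scratch. Your sketch has the right flavor (train tracks, Perron--Frobenius, simplicial charts), but it has gaps that a full proof would have to close, and at least one step is stated in a form that is actually false.

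The most serious issue is the uniqueness step. You claim that the action of $\phi$ on the finite weight vectors $w_n(\mu)$ is governed by a nonnegative matrix derived from $M_f$, and that Perron--Frobenius then restricts the eigenvalues with nonnegative eigenvectors to $\{\lambda_+,\lambda_-^{-1}\}$. For an \emph{irreducible} (let alone primitive) nonnegative matrix, Perron--Frobenius says precisely the opposite: the PF eigenvalue is the \emph{only} eigenvalue possessing a nonnegative eigenvector. The repelling current $\mu_-$ is not a nonnegative $M_f$-eigenvector in the $\phi$-train-track chart; it arises from a train-track representative of $\phi^{-1}$, typically on a \emph{different} graph, and there is no reason for its weight vector in the $f$-chart to be an eigenvector of anything built from $M_f$. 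So your argument, if it worked as stated, would prove there is only \emph{one} fixed projective current, which is wrong. Relatedly, the map on weight vectors of length-$n$ paths induced by $\phi$ is not $M_f$ (which records lengths of edge-images), but a much larger and structurally more complicated nonnegative operator encoding subpath occurrences and backtracking cancellation; its irreducibility/primitivity is a nontrivial assertion that you use without justification.

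The second gap is the one you yourself flag: promoting convergence of finite weight vectors to weak-$\ast$ convergence of currents. This is the actual content of Martin's proof and cannot be dispatched by appeal to ``self-similar structure'' and ``minimality of $\Lambda_\pm$''; one has to show uniform control of cylinder weights $\langle\gamma,\phi^n\mu\rangle$ over all $\gamma$, and that requires the full train-track machinery (bounded cancellation, legality of long subpaths, and the like). Finally, your North--South step is circular as written: to say that $[\mu]\ne[\mu_-]$ guarantees a nonzero component along the dominant PF eigenline, you need to have already identified the ``non-dominant direction'' with $[\mu_-]$ --- which is exactly the uniqueness statement you are trying to establish. A correct argument here should instead use positivity of the intersection pairing with the repelling tree $T_-$ (cf.\ Proposition~\ref{prop:zero}) or an equivalent device to characterize $[\mu_-]$ as the sole current killed in the relevant direction.
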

Note that if $\phi\in \Out(F_N)$ is a non-hyperbolic iwip, the conclusion of Proposition~\ref{prop:rm} still holds if $\PCurr$ is replaced by the \emph{minimal set} $\mathcal M\subseteq \PCurr$, where $\mathcal M$ is the closure in $\PCurr$ of the set of all $[\eta_a]$, where $a\in \FN$ is a primitive element (see \cite{Martin}).

A similar statement is known for $\CVNbar$ by a result of Levitt and Lustig~\cite{LL}:

\begin{prop}\label{prop:LL}
Let $\phi\in \Out(F_N)$ be an iwip. Then there exist unique $[T_+],[T_-]\in \CVNbar$ with the following properties:
\begin{enumerate}
\item The elements $[T_+],[T_-]\in \CVNbar$ are the only fixed points of $\phi$ in $\CVNbar$.
\item For any $[T]\in \CVNbar$, $[T]\ne [T_-]$ we have $\lim_{n\to\infty} [T\phi^n]=[T_+]$ and for any $[T]\in \CVNbar$, $[T]\ne [T_+]$ we have $\lim_{n\to\infty} [T\phi^{-n}]=[T_-]$.
\item We have $T_+\phi=\lambda_+T$ and $T_-\phi^{-1}=\lambda_-T_-$   where $\lambda_+>1$ and $\lambda_->1$. Moreover $\lambda_+$ is the Perron-Frobenius eigenvalue of any train-track representative of $\phi$ and $\lambda_-$ is the Perron-Frobenius eigenvalue of any train-track representative of $\phi^{-1}$.
\end{enumerate}
\end{prop}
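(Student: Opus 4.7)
The plan is to prove Proposition~\ref{prop:LL} in three stages paralleling the proof of Proposition~\ref{prop:rm}: construct the candidate eigentrees, show that they are the only fixed points in $\CVNbar$, and then establish the North--South dynamics. Since $\phi$ is an iwip, Bestvina--Handel furnishes a train-track representative $f: \Gamma \to \Gamma$ whose transition matrix $M(f)$ is irreducible; Perron--Frobenius then yields a positive eigenvector $(\ell_e)_{e \in E\Gamma}$ with eigenvalue $\lambda_+ > 1$ (the strict inequality uses the iwip hypothesis to exclude the possibility of a graph automorphism). Use $\ell_e$ as edge lengths on $\Gamma$ and lift to the universal cover $\widetilde\Gamma \in \cvn$. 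The lift $\tilde f$ stretches legal paths by $\lambda_+$ and folds at illegal turns, so the pseudo-metrics $d_n(x,y) := d_{\widetilde\Gamma}(\tilde f^n x,\tilde f^n y)/\lambda_+^n$ on $\widetilde\Gamma$ form a decreasing sequence. Passing to the limit and collapsing zero-length classes yields $T_+ \in \cvnbar$ with a very small isometric $F_N$-action satisfying $T_+\phi = \lambda_+ T_+$. The tree $T_-$ is built analogously from a train-track of $\phi^{-1}$.

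Next I would show that $[T_+]$ and $[T_-]$ are the only fixed points of $\phi$ in $\CVNbar$. If $[S] \in \CVNbar$ is any fixed point, then $S\phi = \mu S$ for some $\mu > 0$, so $\|\phi^n(g)\|_S = \mu^n \|g\|_S$ for all $g \in F_N$. Pairing with counting currents via Proposition~\ref{int} gives $\langle S, \phi^n \eta_g\rangle = \mu^n \langle S, \eta_g\rangle$. Dividing by $\lambda_+^n$, using the projective convergence $[\phi^n \eta_g] \to [\mu_+]$ from Proposition~\ref{prop:rm}, and extracting scalars, one obtains either $\mu = \lambda_+$ or $\langle S, \mu_+\rangle = 0$. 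In the latter case a symmetric argument applied to $\phi^{-1}$ forces $[S] = [T_-]$, while in the former, density of rational currents in $\Curr(F_N)$ together with Proposition~\ref{int}(4) shows that $\|\cdot\|_S$ is a positive scalar multiple of $\|\cdot\|_{T_+}$, whence $[S] = [T_+]$.

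For the dynamics, the main ingredient is the continuous $F_N$-equivariant surjection $\mathcal Q_+: \partial F_N \to \widehat T_+$ onto the metric completion of $T_+$ together with its Gromov boundary, coming from the train-track structure and satisfying $\mathcal Q_+(\phi(\xi)) = \phi \cdot \mathcal Q_+(\xi)$. Combining $\mathcal Q_+$ with the fact that $\phi$ stretches legal paths by $\lambda_+$, I would show that for any $T \in \cvnbar$ with $[T] \neq [T_-]$ there is a constant $c(T) > 0$ such that
\[
\frac{\|g\|_{T\phi^n}}{\lambda_+^n} \longrightarrow c(T)\,\|g\|_{T_+}
\]
for every $g \in F_N$. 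Pointwise convergence of length functions then gives $[T\phi^n] \to [T_+]$ in $\CVNbar$, and the analogous argument with $\phi^{-1}$ handles the repelling side.

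The main obstacle lies in this third step: showing that $[T] = [T_-]$ is the only obstruction to forward convergence to $[T_+]$, rather than a larger exceptional set. The iwip hypothesis enters crucially here, through the ``maximal mixing'' of the stable lamination of $\phi$, to rule out intermediate $\phi$-invariant subsets of $\CVNbar$ between $\{[T_-]\}$ and the basin of $[T_+]$. Technically one must control $\|\phi^n(g)\|_T$ for elements of $F_N$ whose axes in a fixed train-track point spend significant time in the ``unstable'' direction, and the precise train-track asymptotics encoded by the $\mathcal Q_+$-map, combined with the fact that no proper free factor of $F_N$ is $\phi$-periodic, is precisely the tool to carry out that control.
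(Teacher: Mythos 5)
The paper does not prove Proposition~\ref{prop:LL}; it is quoted as a known theorem of Levitt--Lustig (the reference \cite{LL}, ``Irreducible automorphisms of $F_n$ have North-South dynamics on compactified outer space''), so there is no internal argument to compare yours against. What you have written is a plausible outline of the Levitt--Lustig strategy: building the attracting eigentree $T_+$ as the limit of $d(\tilde f^n x, \tilde f^n y)/\lambda_+^n$ from a train-track representative is standard (it is essentially the stable-tree construction of \cite{GJLL}), and the appearance of the $F_N$-equivariant map $\mathcal Q_+:\partial F_N\to\widehat T_+$ is the right technical tool for the dynamical step. As a research-level sketch this is on target.

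However, there are real gaps that prevent this from being a proof. First, Proposition~\ref{prop:LL} is stated for \emph{all} iwips, not just hyperbolic ones, but your uniqueness argument repeatedly invokes Proposition~\ref{prop:rm}, which is stated (and true) only for \emph{hyperbolic} iwips; for a geometric iwip the North--South dynamics on currents holds only on the minimal set $\mathcal M\subseteq\PCurr$, so the convergence $[\phi^n\eta_g]\to[\mu_+]$ needs $[\eta_g]\in\mathcal M$ (e.g.\ $g$ primitive), a restriction you do not impose. Second, to conclude $[S]=[T_-]$ from $\langle S,\mu_+\rangle=0$ you implicitly need Proposition~\ref{prop:zero}, but \ref{prop:zero} is itself derived in \cite{KL3} using \ref{prop:LL}; invoking it here would be circular, and your ``symmetric argument'' does not independently produce $[S]=[T_-]$. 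Third, even in the case $\mu=\lambda_+$, it is not clear from what you write why $\|\cdot\|_S$ must be a scalar multiple of $\|\cdot\|_{T_+}$ -- the eigenvalue equation $\|\Phi^n(g)\|_S=\lambda_+^n\|g\|_S$ alone does not obviously pin down $S$; in the Levitt--Lustig proof the uniqueness of the fixed point is essentially a byproduct of the dynamics rather than an independent preliminary step. Finally, you yourself flag that the third (and decisive) step -- showing $[T\phi^n]\to[T_+]$ for all $[T]\ne[T_-]$ -- is the crux, and what you offer there is a statement of what needs to be proved rather than a proof; the actual argument requires detailed bounded-cancellation estimates along the train track and the structure theory of the map $\mathcal Q_+$, none of which is carried out.
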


Moreover, in both \cite{LL} and \cite{Martin} it is proved that the
convergence to $[T_+]$ and $[\mu_+]$ in the above statements is
uniform on compact subsets. More precisely:

\begin{prop}\label{uniform}
Let $\phi\in \Out(F_N)$ be a hyperbolic iwip. Let $[T_+],[T_-]\in
\CVNbar$ and $[\mu_+], [\mu_-]\in \PCurr$ be as in
Proposition~\ref{prop:LL} and Proposition~\ref{prop:rm} above.

Let $U, U'$ be open neighborhoods in $\CVNbar$ of $[T_+]$ and $[T_-]$ respectively and let $V, V'$ be open neighborhoods
in $\PCurr$ of
$[\mu_+]$ and $[\mu_-]$ respectively.
Then there exists a constant
$M\ge 1$ such that for every $n\ge M$ we have
$(\CVNbar-U')\phi^n\subseteq U$ and $\phi^n(\PCurr-V')\subseteq V$.
\end{prop}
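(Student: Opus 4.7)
The plan is to derive Proposition~\ref{uniform} as a direct consequence of the pointwise North--South dynamics already established in Propositions~\ref{prop:LL} and \ref{prop:rm}, via a standard compactness argument in topological dynamics. The key additional input I need is that, for any open neighborhood $U$ of $[T_+]$ in $\CVNbar$, there exists an open neighborhood $V \subseteq U$ of $[T_+]$ that is \emph{forward-invariant} under $\phi$, i.e.\ $\overline{V}\phi \subseteq V$; and similarly for $[\mu_+]$ under $\phi$ acting on $\PCurr$. The two sides of the statement (on $\CVNbar$ and on $\PCurr$) are proved by identical arguments with the roles of Propositions~\ref{prop:LL} and~\ref{prop:rm} interchanged, so I will sketch only the tree side.

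Granted such a trapping neighborhood $V$, the argument runs as follows. Set $K := \CVNbar \smsm U'$, which is compact and, because $[T_-] \in U'$, contained in $\CVNbar \smsm \{[T_-]\}$. For each $[T] \in K$, Proposition~\ref{prop:LL}(2) gives an integer $N_T \geq 1$ with $[T]\phi^{N_T} \in V$. Since $\phi^{N_T}$ is a self-homeomorphism of $\CVNbar$, there is an open neighborhood $W_T \ni [T]$ with $W_T \phi^{N_T} \subseteq V$. Finitely many $W_{T_1}, \dots, W_{T_r}$ cover the compact set $K$. Set $M := \max_i N_{T_i}$. For any $n \geq M$ and any $[T] \in K$, pick $i$ with $[T] \in W_{T_i}$; then $[T]\phi^{N_{T_i}} \in V$, and the forward-invariance of $V$ yields
\[
[T]\phi^n \;=\; ([T]\phi^{N_{T_i}})\,\phi^{\,n - N_{T_i}} \;\in\; V \;\subseteq\; U,
\]
which is exactly $K\phi^n \subseteq U$. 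Running the same argument on $\PCurr$ with the repeller $[\mu_-]$ removed and using a trapping neighborhood of $[\mu_+]$ gives $\phi^n(\PCurr \smsm V') \subseteq V$; taking the maximum of the two resulting constants produces the single $M$ the statement demands.

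The main obstacle I anticipate is precisely the existence of the forward-invariant neighborhood $V$ of $[T_+]$ (and of $[\mu_+]$). Pointwise convergence $[T]\phi^n \to [T_+]$ does not by itself produce a metric in which $\phi$ is a contraction near $[T_+]$, and the family $\{\phi^n\}_{n \geq 1}$ is not a priori equicontinuous on $\CVNbar$; so one cannot conclude uniform convergence by a naive extraction-of-subsequence argument. What saves us is the specific dynamical structure of the hyperbolic iwip at $[T_+]$: the Perron--Frobenius eigenvalue $\lambda_+ > 1$ of Proposition~\ref{prop:LL}(3) encodes a genuine topological expansion/contraction, and the construction of $[T_+]$ via train-track representatives exhibits small trapping neighborhoods for $\phi$. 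This refined statement is essentially what is proved in \cite{LL} (and in \cite{Martin} for the currents side), and I would invoke it as the one nontrivial input, the rest of the proof being the purely topological compactness argument sketched above.
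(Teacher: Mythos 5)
Your compactness argument is correct and is essentially identical in structure to the paper's Lemma~\ref{lem:0} (given a forward-invariant trapping neighborhood, use pointwise convergence plus an open cover of the compact set $\CVNbar\smsm U'$ and take the maximum of finitely many times). You also correctly isolate the one nontrivial point: the existence of a forward-invariant neighborhood $V$ of $[T_+]$ (and of $[\mu_+]$) that can be made to lie inside any given neighborhood. Where you diverge from the paper is in how that trapping neighborhood is obtained. You propose to ``invoke'' it from \cite{LL} and \cite{Martin}, but those references prove the uniform North--South dynamics directly, so invoking them at this step would either be circular or would simply reproduce the already-cited proof of Proposition~\ref{uniform} (and indeed the paper attributes the proposition to \cite{LL,Martin} in the first place). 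The content of the paper's \emph{alternative} proof---which Remark~\ref{rem:uniform} flags as the point of Section~\ref{NS}---is precisely an explicit, self-contained construction of the trapping neighborhood using the intersection form of Proposition~\ref{int}.

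Concretely, the paper sets
\[
U_+ \;=\; \{[\mu]\in\PCurr :\ \langle T_-,\mu\rangle < \langle T_+,\mu\rangle\},
\]
which is open by continuity of $\langle\cdot,\cdot\rangle$, contains $[\mu_+]$ because $\langle T_-,\mu_+\rangle=0<\langle T_+,\mu_+\rangle$ (Proposition~\ref{prop:zero}), and satisfies $\phi(\overline{U}_+)\subseteq U_+$ by the one-line computation
\[
\langle T_-,\phi\mu\rangle=\tfrac{1}{\lambda_-}\langle T_-,\mu\rangle,\qquad
\langle T_+,\phi\mu\rangle=\lambda_+\langle T_+,\mu\rangle,
\]
with $\lambda_\pm>1$ (Lemma~\ref{lem:invariant}). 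That $\phi^n(U_+)$ eventually enters any prescribed neighborhood of $[\mu_+]$ is then Lemma~\ref{lem:open}, again using only pointwise dynamics plus compactness. The mirror construction $V_+=\{[T]\in\CVNbar:\langle T,\mu_-\rangle<\langle T,\mu_+\rangle\}$ handles the tree side. You should also note that you do not actually need the stronger hypothesis $\overline{V}\phi\subseteq V$ for your cover argument; $V\phi\subseteq V$ suffices, and this is exactly what Lemma~\ref{lem:invariant} gives. So the gap in your proposal is not the compactness step (which is fine) but the missing construction of the trapping sets: the ``height function'' mechanism of Definitions~\ref{defn:nbhds}--\ref{defn:height} is what makes the argument genuinely independent of the uniform statements in \cite{LL,Martin}, and without something in that role your proof collapses back to citation.
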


Proposition~\ref{uniform} immediately implies, via the standard
ping-pong argument, the following:

\begin{cor}\label{cor:pingpong}
Let $\phi,\psi\in \Out(F_N)$ be hyperbolic iwips such that
$[T_\pm(\phi)]$, $[T_\pm(\psi)] \in \CVNbar$ are four distinct points or
that $[\mu_\pm(\phi)], [\mu_\pm(\psi)] \in \PCurr$ are
four distinct points. Then there exists $M\ge 1$ such that for every
$m,n\ge M$ the subgroup $\langle \phi^m, \psi^n\rangle\subseteq \Out(F_N)$
is free of rank two with free basis $\phi^m, \psi^n$.
%
\qed
\end{cor}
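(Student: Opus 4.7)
The plan is to execute the classical ping-pong lemma, using Proposition~\ref{uniform} to produce the required table sets. I treat the two hypotheses in parallel, writing out the $\CVNbar$ case in detail; the $\PCurr$ case is symmetric modulo a left/right swap.

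Assume first that $[T_+(\phi)], [T_-(\phi)], [T_+(\psi)], [T_-(\psi)]$ are four distinct points of $\CVNbar$. Since $\CVNbar$ is Hausdorff, I choose pairwise disjoint open neighborhoods $U_+, U_-$ of $[T_+(\phi)], [T_-(\phi)]$ and $V_+, V_-$ of $[T_+(\psi)], [T_-(\psi)]$, and further shrink them so that their union is a proper subset of $\CVNbar$ (possible since $\CVNbar$ is an infinite compactum). Fix any basepoint $[T_0] \in \CVNbar \setminus (U_+ \cup U_- \cup V_+ \cup V_-)$. Apply Proposition~\ref{uniform} twice, to $\phi$ (with neighborhood pair $(U_+, U_-)$) and to $\psi$ (with $(V_+, V_-)$), and let $M \ge 1$ be large enough that for every $m \ge M$ all four inclusions
\[
(\CVNbar \smsm U_-)\phi^m \subseteq U_+, \quad (\CVNbar \smsm U_+)\phi^{-m} \subseteq U_-,
\]
\[
(\CVNbar \smsm V_-)\psi^m \subseteq V_+, \quad (\CVNbar \smsm V_+)\psi^{-m} \subseteq V_-
\]
hold simultaneously.

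Now fix $m, n \ge M$ and take an arbitrary nontrivial reduced word $w = a_1 a_2 \cdots a_k$ in the free group on two symbols $x, y$, substituting $x \mapsto \phi^m$ and $y \mapsto \psi^n$. Each letter is of the form $\phi^{jm}$ or $\psi^{\ell n}$ with nonzero exponent, and consecutive letters involve different generators. Using the right action, I trace the point $[T_0] \cdot a_1 \cdot a_2 \cdots a_k$ step by step: the hypothesis $[T_0] \notin U_- \cup U_+ \cup V_- \cup V_+$ guarantees that $[T_0] \cdot a_1$ lies in the attracting neighborhood of the generator appearing in $a_1$ (with sign dictated by the sign of its exponent). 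At each subsequent step, the point has just landed in one of $U_\pm, V_\pm$, which, by the pairwise disjointness, is disjoint from the repelling neighborhood corresponding to the generator of $a_{i+1}$; hence the next application sends it into the attracting neighborhood of that generator. By induction $[T_0] \cdot w$ lies in $U_+ \cup U_- \cup V_+ \cup V_-$, so $[T_0] \cdot w \ne [T_0]$, forcing $w \ne 1$ in $\Out(F_N)$. Hence $\phi^m, \psi^n$ freely generate a rank-two free subgroup.

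In the second case, when the four currents $[\mu_\pm(\phi)], [\mu_\pm(\psi)]$ are distinct in $\PCurr$, I run the identical argument on $\PCurr$ using the left action of $\Out(F_N)$ and the second half of Proposition~\ref{uniform}. The only bookkeeping change is that letters in a word now act by composition on the left, so in evaluating $w \cdot [\mu]$ one processes the letters from right to left; the disjointness of the four chosen neighborhoods and the analogous four inclusions $\phi^m(\PCurr \smsm V_-') \subseteq V_+'$, etc., yield the same conclusion by the same induction. There is no serious obstacle here: the statement is pure ping-pong, and the only real content is Proposition~\ref{uniform}, which is already in hand. The one point that requires a momentary pause is confirming the existence of a suitable basepoint outside all four neighborhoods, but this is immediate from the fact that $\CVNbar$ (resp.\ $\PCurr$) has far more than four points and the neighborhoods can be taken arbitrarily small.
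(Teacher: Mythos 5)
Your proof is correct and is exactly the ``standard ping-pong argument'' that the paper invokes without spelling it out: you use Proposition~\ref{uniform} to produce the four table sets and then run Klein's ping-pong on a basepoint chosen outside their union. The only small bookkeeping points — that $\phi^{-n}$ satisfies the analogous inclusions (apply Proposition~\ref{uniform} to $\phi^{-1}$, whose attracting/repelling trees are the repelling/attracting trees of $\phi$), that blocks $\phi^{jm}$ with $j\ne 0$ have $|j|m\ge M$, and that a basepoint outside the four disjoint open neighborhoods exists (automatic since $\CVNbar$ and $\PCurr$ are connected) — are all handled or trivially handleable, so there is no gap.
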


We show in Section~\ref{NS} below (specifically, see
Proposition~\ref{prop:sum}) that Proposition~\ref{uniform}
can actually be formally derived from pointwise convergence to $[T_+]$
and $[\mu_+]$ in Proposition~\ref{prop:LL} and
Proposition~\ref{prop:rm}.
This will give an alternative
proof of Proposition~\ref{uniform}.

In \cite{KL3} we gave a characterization of the situation where
$\langle T,\mu\rangle=0$, in terms of the \emph{dual algebraic
  lamination} $L^2(T)$ of the $\mathbb R$-tree $T$ and the {\em
  support} $\Supp(\mu)$ of the current $\mu$ (see \cite{KL3} for a
precise definition of these terms):

\begin{thm}\label{thm:zero}\cite{KL3}
Let $T\in \cvnbar$ and $\mu\in \Curr(F_N)$. Then $\langle T,\mu\rangle=0$ if and only if $\Supp(\mu)\subseteq L^2(T)$.
\end{thm}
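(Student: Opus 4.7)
The plan is to establish each direction separately, combining density of rational currents in $\Curr(F_N)$, continuity of the intersection form from Proposition~\ref{int}, and the standard description of $L^2(T)$ as the closed, $F_N$-invariant, flip-invariant subset of $\partial^2 F_N$ arising as accumulation points of endpoint-pairs $(g_k^{-\infty}, g_k^{+\infty})$ of non-trivial cyclically reduced $g_k \in F_N$ with $||g_k||_T / |g_k|_A \to 0$, for a fixed free basis $A$ of $F_N$.

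For the direction $\Supp(\mu) \subseteq L^2(T) \Rightarrow \langle T,\mu\rangle = 0$, I would approximate $\mu$ in the weak-$\ast$ topology by positive combinations of counting currents $\mu_n = \sum_i c_i^{(n)} \eta_{g_i^{(n)}}$ whose axes concentrate on $\Supp(\mu)$; such approximants exist because rational currents are dense in $\Curr(F_N)$ and cylinder weights away from $\Supp(\mu)$ can be made arbitrarily small. Since the support lies in $L^2(T)$, I can arrange the $g_i^{(n)}$ so that $||g_i^{(n)}||_T / |g_i^{(n)}|_A$ is uniformly small; tracking total mass forces
\[
\langle T, \mu_n \rangle \;=\; \sum_i c_i^{(n)} \, ||g_i^{(n)}||_T \;\longrightarrow\; 0,
\]
and continuity of the intersection form yields $\langle T, \mu \rangle = 0$.

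For the reverse direction I would argue by contrapositive. Suppose $(x_0, y_0) \in \Supp(\mu) \setminus L^2(T)$. Closedness of $L^2(T)$ provides a two-sided cylinder $Cyl_A(v_0)$ containing $(x_0, y_0)$ and disjoint from $L^2(T)$, with $\mu(Cyl_A(v_0)) > 0$ by definition of support. The key step is to extract a constant $\delta > 0$ such that every non-trivial cyclically reduced $g$ whose cyclic word reads $v_0$ satisfies $||g||_T \geq \delta |g|_A$. Decomposing rational approximants of $\mu$ according to whether the cylinder $Cyl_A(v_0)$ is hit then yields
\[
\langle T, \mu \rangle \;\geq\; \delta \cdot \mu(Cyl_A(v_0)) \;>\; 0,
\]
a contradiction.

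The main obstacle is precisely the uniform lower bound just invoked: the defining property of $L^2(T)$ gives such bounds only pointwise along sequences, whereas the argument needs a constant $\delta$ valid for \emph{all} cyclically reduced $g$ with an occurrence of $v_0$. Promoting pointwise to uniform requires either a compactness argument on the compact set of endpoint pairs in $\overline{Cyl_A(v_0)}$ combined with the lower semicontinuity of $g \mapsto ||g||_T / |g|_A$ in the geometric topology on $\partial^2 F_N$, or — more robustly — exploiting the explicit construction of $\langle\cdot,\cdot\rangle$ in \cite{KL2} as the integral of a lower semicontinuous ``length density'' against the current, which already encodes the needed uniformity. Once this technical point is in hand, both implications close by standard density and continuity of the intersection form.
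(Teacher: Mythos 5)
The paper does not prove Theorem~\ref{thm:zero}: it is quoted from \cite{KL3} and used as a black box, so there is no internal proof to compare against and your sketch must be evaluated on its own.

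The contrapositive direction has a genuine gap, and it is precisely the step you flag but then try to wave away. The uniform lower bound you invoke --- a $\delta>0$ with $||g||_T \geq \delta\,|g|_A$ for \emph{every} cyclically reduced $g$ whose cyclic word contains $v_0$ --- is not merely hard to establish, it is false whenever $T$ has a non-trivial dual lamination. Take $T = T_+(\phi)$ for a hyperbolic iwip $\phi$, pick a leaf $(x,y) \in L^2(T)$, and let $w_n$ be the length-$n$ subword of the bi-infinite geodesic from $x$ to $y$ in $T_A$ based at a fixed point, arranged so that $g_n := v_0 w_n$ is cyclically reduced and still exhibits $v_0$. The defining feature of a leaf of $L^2(T)$ is that its two ends are identified under the canonical map to the metric completion of $T$; combined with bounded backtracking for an equivariant map $f : T_A \to T$, this forces the $T$-diameter of $f$ applied to any subsegment of $[x,y]$ to stay uniformly bounded. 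Hence $||g_n||_T \leq \mathrm{diam}_T\, f([1, g_n])$ is bounded independently of $n$ while $|g_n|_A \to \infty$, so $||g_n||_T / |g_n|_A \to 0$ even though every $g_n$ contains $v_0$. Neither a compactness argument over $\overline{Cyl_A(v_0)}$ nor lower semicontinuity can rescue an inequality that fails outright.

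What the argument really needs is a bound proportional to the \emph{number of occurrences} of $v_0$, roughly $||g||_T \geq \delta\,\langle v_0, \eta_g\rangle_A - C$; this is consistent with the counterexample (the frequency of $v_0$ in $g_n$ tends to zero) and, after passing to a rational approximating sequence, does yield $\langle T,\mu\rangle \geq \delta\,\mu(Cyl_A(v_0)) > 0$. But that refined inequality is precisely the hard technical content of \cite{KL3}: it relies on the map to the metric completion and bounded-backtracking estimates in an essential way, not just on density of rational currents and continuity of the intersection form. The forward direction of your sketch has a related circularity: by continuity, the claim that one can ``arrange the $g_i^{(n)}$ so that $||g_i^{(n)}||_T / |g_i^{(n)}|_A$ is uniformly small'' is essentially the sought conclusion $\langle T,\mu\rangle = 0$, and the definition of $L^2(T)$ only supplies such sequences leaf by leaf along special approximations, not simultaneously for a rational current converging weak-$\ast$ to $\mu$. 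As written, the sketch converts the theorem into two assertions, one false and one equivalent to the theorem.
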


As a consequence, we proved~\cite{KL3}:

\begin{prop}\label{prop:zero}\cite{KL3}
Let $\phi\in \Out(F_N)$ be a hyperbolic iwip, and let $[T_+],[T_-]\in
\CVNbar$ and $[\mu_+], [\mu_-]\in \PCurr$ be as in
Proposition~\ref{prop:LL} and Proposition~\ref{prop:rm} above.
Then the following hold:
\begin{enumerate}
\item For $T\in \overline{cv}(F_N)$ we have $\langle T,\mu_+\rangle=0$ if and only if $[T]=[T_-]$ and we have $\langle T,\mu_-\rangle=0$ if and only if $[T]=[T_+]$.
\item For $\mu\in \Curr(F_N)$, $\mu\ne 0$ we have $\langle T_+,\mu\rangle=0$ if and only if $[\mu]=[\mu_-]$ and we have $\langle T_-,\mu\rangle=0$ if and only if $[\mu]=[\mu_+]$.
\item
In particular, we have $\langle T_+, \mu_+\rangle>0$ and $\langle T_-,\mu_-\rangle>0$.
\end{enumerate}
\end{prop}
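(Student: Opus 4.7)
The plan is to combine bilinearity and equivariance of the intersection form (Proposition~\ref{int}) with the eigenvalue equations from Propositions~\ref{prop:LL} and~\ref{prop:rm}, and then to propagate these identities using continuity of $\langle\,,\,\rangle$ and the North--South dynamics. As a first, purely formal step, note that $\phi^{-1}\mu_-=\lambda_-\mu_-$ rewrites as $\phi\mu_-=\lambda_-^{-1}\mu_-$, so equivariance yields
\[
\lambda_+\langle T_+,\mu_-\rangle=\langle T_+\phi,\mu_-\rangle=\langle T_+,\phi\mu_-\rangle=\lambda_-^{-1}\langle T_+,\mu_-\rangle.
\]
Since $\lambda_+>1>\lambda_-^{-1}$, this forces $\langle T_+,\mu_-\rangle=0$; symmetrically $\langle T_-,\mu_+\rangle=0$. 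Using property (2) of Proposition~\ref{int} this already supplies the ``if'' directions of both (1) and (2).

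The heart of the argument is the nondegeneracy $\langle T_+,\mu_+\rangle>0$. Pick any $T_0\in\cvn$ in the open unprojectivized Outer space; because $T_0$ is free and discrete one has $\langle T_0,\mu_+\rangle>0$, for any nonzero current assigns positive mass to some cylinder and $T_0$ weighs every such cylinder positively. By Proposition~\ref{prop:LL} there exist $c_n>0$ with $c_nT_0\phi^n\to T_+$ in $\cvnbar$, and continuity of the intersection form together with equivariance gives
\[
c_n\lambda_+^n\langle T_0,\mu_+\rangle=\langle c_nT_0\phi^n,\mu_+\rangle\longrightarrow\langle T_+,\mu_+\rangle,
\]
so $c_n\lambda_+^n$ converges. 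Now choose $g_0\in F_N$ with $\|g_0\|_{T_+}>0$: continuity of length functions gives $c_n\|g_0\|_{T_0\phi^n}\to\|g_0\|_{T_+}>0$, while the Perron--Frobenius lower bound $\|\Phi^n(g_0)\|_{T_0}\ge C\lambda_+^n$, valid for any train-track representative of the iwip $\phi$, forces $c_n\lambda_+^n$ to stay bounded away from $0$. Hence the limit is strictly positive, and $\langle T_-,\mu_-\rangle>0$ follows by applying the same argument to $\phi^{-1}$.

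The remaining ``only if'' direction of (2) is obtained dynamically: suppose $\mu\ne 0$ satisfies $\langle T_+,\mu\rangle=0$ and $[\mu]\ne[\mu_-]$. Then by Proposition~\ref{prop:rm} we have $\phi^n[\mu]\to[\mu_+]$, so after normalizing in $\Curr(F_N)$ with a continuous positive-homogeneous norm and passing to a subsequence, $c_n\phi^n\mu\to\alpha\mu_+$ in $\Curr(F_N)$ for some $\alpha>0$. However $\langle T_+,c_n\phi^n\mu\rangle=c_n\lambda_+^n\langle T_+,\mu\rangle=0$ for every $n$, so by continuity $\alpha\langle T_+,\mu_+\rangle=0$, contradicting the nondegeneracy just established. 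The three remaining uniqueness assertions in (1) and (2) follow by entirely symmetric arguments, swapping $\phi\leftrightarrow\phi^{-1}$ and $+\leftrightarrow -$ and using Proposition~\ref{prop:LL} in place of Proposition~\ref{prop:rm} where appropriate.

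The main obstacle I anticipate is the step $c_n\lambda_+^n\not\to 0$ in the nondegeneracy proof, which really relies on the Perron--Frobenius theory for a train-track representative of the iwip $\phi$ rather than on the soft North--South dynamics alone. A cleaner, if less self-contained, alternative would be to invoke Theorem~\ref{thm:zero} directly: $\langle T_+,\mu_+\rangle=0$ would force $\Supp(\mu_+)\subseteq L^2(T_+)$, but for a hyperbolic iwip $\Supp(\mu_+)$ is the attracting lamination whereas $L^2(T_+)$ is the repelling one, and these are known to be distinct.
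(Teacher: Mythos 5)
The paper does not reprove this proposition; it is imported from \cite{KL3}, where the argument proceeds through the lamination-support criterion (Theorem~\ref{thm:zero}) together with the identification of $L^2(T_+)$ with the repelling lamination of $\phi$ --- exactly the ``cleaner alternative'' you mention at the end. Your main argument is a genuinely different, more hands-on route: the eigenvalue clash gives $\langle T_+,\mu_-\rangle=\langle T_-,\mu_+\rangle=0$ for free, and the ``only if'' directions are then extracted from North--South dynamics plus continuity of the intersection form, once the nondegeneracy $\langle T_+,\mu_+\rangle>0$ is established. This avoids the dual-lamination machinery entirely and is worth recording; the normalization steps (choosing $c_n$ so that $c_nT_0\phi^n\to T_+$ in $\cvnbar$, resp.\ so that $c_n\phi^n\mu\to\alpha\mu_+$ in $\Curr(\FN)$) are standard once one fixes a reference element $g_0$ with $\|g_0\|_{T_+}>0$ and uses that the condition $\|g_0\|_T>0$ is open and scale-invariant.

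There is, however, a concrete error in the nondegeneracy step: your Perron--Frobenius inequality runs in the wrong direction. With $c_n\|\Phi^n(g_0)\|_{T_0}\to\|g_0\|_{T_+}>0$, the lower bound $\|\Phi^n(g_0)\|_{T_0}\ge C\lambda_+^n$ yields
\[
c_n\lambda_+^n\le\tfrac1C\,c_n\|\Phi^n(g_0)\|_{T_0}\longrightarrow\tfrac1C\|g_0\|_{T_+},
\]
which is an \emph{upper} bound on $c_n\lambda_+^n$ and is useless here. What you actually need is the \emph{upper} bound $\|\Phi^n(g_0)\|_{T_0}\le C\lambda_+^n\|g_0\|_{T_0}$, which is the elementary train-track estimate (the entries of the $n$-th power of the primitive transition matrix are $O(\lambda_+^n)$, the unreduced image of a loop has length $O(\lambda_+^n|w|)$, and tightening only shortens). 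From that one gets
\[
c_n\lambda_+^n\ge\tfrac{1}{C\|g_0\|_{T_0}}\,c_n\|\Phi^n(g_0)\|_{T_0}\longrightarrow\tfrac{\|g_0\|_{T_+}}{C\|g_0\|_{T_0}}>0,
\]
and the nondegeneracy, hence the rest of your argument, goes through. Note also that the lower bound you quoted, while true for a hyperbolic iwip, is not an immediate consequence of Perron--Frobenius alone (it requires bounded cancellation and the absence of periodic conjugacy classes), so it is fortunate that your argument does not actually need it once the direction is corrected.
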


Note also that, as a direct comparison of the definitions shows, if $\phi\in
\Out(F_N)$ is a hyperbolic iwip, then $\Supp(\mu_+(\phi))$ is exactly what
was termed the ``stable lamination'' $\Lambda_\phi^+$ of $\phi$ in \cite{BFH97}.

A result of \cite{BFH97}, which is reproved in \cite{KL6} via
different methods, states:

\begin{prop}\label{prop:vc}\cite{BFH97,KL6}
Let $N\ge 3$ and let $\phi\in \Out(F_N)$ be an iwip. Then
$\Stab_{Out(F_N)}([T_+(\phi)])$ is virtually cyclic and contains
$\langle \phi\rangle$ as a subgroup of finite index.
\end{prop}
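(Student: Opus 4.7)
Set $T_\pm := T_\pm(\phi)$, $\mu_\pm := \mu_\pm(\phi)$, and $H := \Stab_{\Out(F_N)}([T_+])$; Proposition~\ref{prop:LL} gives $\langle \phi\rangle\subseteq H$. The plan is to build a stretch-factor homomorphism $c\colon H\to(\R_{>0},\cdot)$ and to prove (a) that its kernel $K := \ker c$ is finite, and (b) that $c\big|_{C_H(\phi)}$ has image the infinite cyclic group $\langle \lambda_+\rangle$; together these give $[H:\langle\phi\rangle]<\infty$. For $\psi\in H$ the identity $[T_+]\psi=[T_+]$ gives a unique $c(\psi)>0$ with $T_+\psi=c(\psi)T_+$, making $c$ a homomorphism with $c(\phi)=\lambda_+$ by Proposition~\ref{prop:LL}(3). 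For $\psi\in H$ the conjugate iwip $\phi_\psi := \psi\phi\psi^{-1}$ has the same Perron-Frobenius eigenvalue $\lambda_+$ as $\phi$, and a short cocycle computation gives $T_+\phi_\psi = \lambda_+ T_+$, so $\phi_\psi\phi^{-1}\in K$. Thus the conjugation action of $H$ on the conjugacy class of $\phi$ is contained in the coset $K\phi$, and once $K$ is known to be finite we get $[H:C_H(\phi)]\le |K|$, reducing the problem to showing $[C_H(\phi):\langle\phi\rangle]<\infty$.

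\emph{The main obstacle} is the finiteness of $K$: any $\psi\in K$ must induce an $F_N$-equivariant isometry of the very small tree $T_+$ exactly. For the hyperbolic iwip $\phi$, $T_+$ has trivial arc stabilizers and dense orbits, and its dual lamination satisfies $\supp(\mu_+)=L^2(T_+)$ by Theorem~\ref{thm:zero}. Any such self-symmetry must then preserve the lamination $\supp(\mu_+)$ and the Perron-Frobenius data coming from a train track representative of $\phi$; the group of such rigid symmetries is finite. It is in this step that the assumption $N\ge 3$ enters the argument, to exclude the degenerate extra symmetries available for $F_2$. Any honest proof of Proposition~\ref{prop:vc} ultimately lives or dies by this rigidity statement.

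\emph{Finish inside the centralizer.} For $\psi\in C_H(\phi)$ commutativity forces $\psi$ to permute the two $\phi$-fixed points $\{[T_+],[T_-]\}$ on $\CVNbar$; since $\psi\in H$ already fixes $[T_+]$, it also fixes $[T_-]$, giving a second cocycle $c_-\colon C_H(\phi)\to\R_{>0}$ with $c_-(\phi)=\lambda_-^{-1}$. Applying Proposition~\ref{int}(3) to $\langle T_\pm\psi,\mu_\pm\rangle=\langle T_\pm,\psi\mu_\pm\rangle$ together with $\langle T_\pm,\mu_\pm\rangle>0$ and $\langle T_\pm,\mu_\mp\rangle=0$ from Proposition~\ref{prop:zero} shows that $\psi$ also fixes each of $[\mu_+]$ and $[\mu_-]$ (the swap alternative is excluded by positivity), and that the corresponding current scalars equal $c(\psi)$ and $c_-(\psi)$. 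A final invocation of the rigidity of $T_+$ --- intuitively, $c(\psi)$ gives a scalar self-similarity of $T_+$ commuting with the $\lambda_+$-self-similarity of $\phi$, and must therefore lie in $\langle\lambda_+\rangle$ --- shows $c(C_H(\phi))=\langle\lambda_+\rangle$. Combined with $|K|<\infty$, this gives $[C_H(\phi):\langle\phi\rangle]\le |K|$ and hence $[H:\langle\phi\rangle]<\infty$, as required.
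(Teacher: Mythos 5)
The paper does not prove Proposition~\ref{prop:vc}; it is quoted from \cite{BFH97} and \cite{KL6}, so there is no in-paper argument to match against. Your cocycle framework --- defining $c\colon H\to\R_{>0}$ by $T_+\psi=c(\psi)T_+$, noting $c(\phi)=\lambda_+$, and reducing to finiteness of $\ker c$ together with an analysis of $C_H(\phi)$ --- is the right shape and is indeed close in spirit to the approach of \cite{KL6}. The cocycle computations you do carry out (that $T_+(\psi\phi\psi^{-1})=\lambda_+T_+$, that $\psi\in C_H(\phi)$ must also fix $[T_-]$, and that $\psi$ then fixes $[\mu_\pm]$ via Propositions~\ref{int}(3) and \ref{prop:zero}) are correct.

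However, the proposal has a genuine gap exactly where you flag it, and one more besides. (1) Finiteness of $K=\ker c$ is the entire mathematical content of the result: a $\psi\in K$ gives an isometry $f\colon T_+\to T_+$ with $f(\Psi(g)x)=g\,f(x)$, and showing that only finitely many $\psi\in\Out(F_N)$ admit such an $f$ is the delicate rigidity theorem that \cite{KL6} is written to prove. Calling this "the group of rigid symmetries is finite" is naming the target, not proving it. (2) The second "invocation of rigidity" at the end is also unsupported: if $K$ is finite then $C_H(\phi)/K$ embeds in $\R_{>0}$ and is therefore torsion-free abelian, but that alone does not force the image of $c$ to be the discrete group $\langle\lambda_+\rangle$ rather than a dense rank-one subgroup; one needs an additional discreteness input (e.g. proper discontinuity of the $\langle\phi\rangle$-action on $\CVNbar\smallsetminus\{[T_\pm]\}$ as in Proposition~\ref{prop:sum}(5), or a spectral argument) that you do not supply. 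There is also a factual slip: you write $\Supp(\mu_+)=L^2(T_+)$, but Proposition~\ref{prop:zero} gives $\langle T_+,\mu_+\rangle>0$, so by Theorem~\ref{thm:zero} we have $\Supp(\mu_+)\not\subseteq L^2(T_+)$; the correct containment is $\Supp(\mu_-)\subseteq L^2(T_+)$. Finally, the proposal uses Propositions~\ref{prop:rm} and \ref{prop:zero}, which in this paper are stated only for \emph{hyperbolic} iwips, whereas Proposition~\ref{prop:vc} as stated is for all iwips, so even a completed version of your argument would cover only the atoroidal case.
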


As a consequence, we derive:

\begin{prop}\label{prop:stab}
Let $\phi\in \Out(F_N)$ be a hyperbolic iwip. Then
\begin{gather*}
\Stab_{Out(F_N)}([T_+(\phi)])=\Stab_{Out(F_N)}(\Supp(\mu_-(\phi)))
=\Stab_{Out(F_N)}([\mu_-(\phi)]) \, ,
\end{gather*}
and this stabilizer is virtually cyclic.
\end{prop}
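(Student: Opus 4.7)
The plan is to prove the double equality by establishing the cyclic chain of inclusions
\[\Stab_{Out(F_N)}([T_+])\ \subseteq\ \Stab_{Out(F_N)}([\mu_-])\ \subseteq\ \Stab_{Out(F_N)}(\Supp(\mu_-))\ \subseteq\ \Stab_{Out(F_N)}([T_+]),\]
and then to read off the virtual cyclicity of the common group directly from Proposition~\ref{prop:vc}. The engine throughout will be the interplay between the equivariance property $\langle T\psi,\mu\rangle=\langle T,\psi\mu\rangle$ of the intersection form (Proposition~\ref{int}(3)) and the ``vanishing $\Leftrightarrow$ support in the dual lamination'' dictionary supplied by Theorem~\ref{thm:zero} and Proposition~\ref{prop:zero}.

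For the first inclusion, given $\psi\in\Stab([T_+])$ with $T_+\psi=cT_+$, I would compute $\langle T_+,\psi\mu_-\rangle=\langle T_+\psi,\mu_-\rangle=c\langle T_+,\mu_-\rangle=0$, using Proposition~\ref{prop:zero}(1) to kill the right-hand side. Since $\psi\mu_-\neq 0$, Proposition~\ref{prop:zero}(2) immediately forces $[\psi\mu_-]=[\mu_-]$. The second inclusion is essentially formal: if $[\psi\mu_-]=[\mu_-]$ then the two currents have identical supports, and since the $\Out(F_N)$-action on $\Curr(F_N)$ is pushforward along the induced homeomorphism of $\partial^2 F_N$, one obtains $\psi(\Supp(\mu_-))=\Supp(\psi\mu_-)=\Supp(\mu_-)$.

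The most substantive step is the third inclusion, which closes the loop. Given $\psi\in\Stab(\Supp(\mu_-))$, I would first observe that $\Supp(\psi\mu_-)=\psi(\Supp(\mu_-))=\Supp(\mu_-)$. From $\langle T_+,\mu_-\rangle=0$ and Theorem~\ref{thm:zero}, the inclusion $\Supp(\mu_-)\subseteq L^2(T_+)$ holds, hence also $\Supp(\psi\mu_-)\subseteq L^2(T_+)$, and the reverse direction of Theorem~\ref{thm:zero} then yields $\langle T_+,\psi\mu_-\rangle=0$. Shuttling $\psi$ across the pairing via Proposition~\ref{int}(3) gives $\langle T_+\psi,\mu_-\rangle=0$, and finally the ``only if'' half of Proposition~\ref{prop:zero}(1) pins down $[T_+\psi]=[T_+]$, so $\psi\in\Stab([T_+])$.

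The virtual cyclicity of the common stabilizer is then immediate from Proposition~\ref{prop:vc}. I do not anticipate a serious obstacle in executing this plan: the real content is already packaged into Theorem~\ref{thm:zero} and Proposition~\ref{prop:zero}, and what remains is simply the bookkeeping needed to shuttle $\psi$ back and forth across the intersection form and between a current and its support, taking care at each step that the nonzero currents one produces fall under the hypothesis of Proposition~\ref{prop:zero}(2).
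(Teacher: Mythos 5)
Your proof is correct and follows essentially the same route as the paper's: shuttling across the intersection form via Proposition~\ref{int}(3) and using the dictionary of Theorem~\ref{thm:zero} and Proposition~\ref{prop:zero}, with virtual cyclicity read off from Proposition~\ref{prop:vc}. The only cosmetic difference is that you close the equality via a single cyclic chain of three inclusions, whereas the paper establishes the two pairwise equalities $\Stab([T_+])=\Stab([\mu_-])$ and $\Stab(\Supp(\mu_-))=\Stab([\mu_-])$ separately.
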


\begin{proof}
Proposition~\ref{prop:zero} implies that
$\Stab_{Out(F_N)}([T_+])=\Stab_{Out(F_N)}([\mu_-])$. Indeed,
suppose $\phi\in \Stab_{Out(F_N)}([T_+])$, so that $T_+\phi=c T_+$ for
some $c>0$. Then
\[
\langle T_+, \phi\mu_-\rangle=\langle T_+\phi, \mu_-\rangle=\langle
cT_+,\mu_-\rangle=c\langle T_+,\mu_-\rangle=0
\]
Therefore $\phi[\mu_-]=[\mu_-]$ by part 2 of
Proposition~\ref{prop:zero} and hence $\Stab_{Out(F_N)}([T_+])\subseteq
\Stab_{Out(F_N)}([\mu_-])$. A similar argument using part (1) of
Proposition~\ref{prop:zero} shows that $\Stab_{Out(F_N)}([\mu_-])\subseteq
\Stab_{Out(F_N)}([T_+])$, and hence
$\Stab_{Out(F_N)}([T_+])=\Stab_{Out(F_N)}([\mu_-])$.

We claim that
$\Stab_{Out(F_N)}(\Supp(\mu_-))=\Stab_{Out(F_N)}([\mu_-])$. Indeed, the
inclusion $\Stab_{Out(F_N)}([\mu_-])\subseteq \Stab_{Out(F_N)}(\Supp(\mu_-))$
is obvious.

Suppose now that $\phi\in \Stab_{Out(F_N)}(\Supp(\mu_-))$. Then
$\Supp(\phi\mu_-)=\Supp(\mu_-)$.

From Proposition~\ref{prop:zero} we know that $\langle
T_+,\mu_-\rangle=0$.
Thus we derive from Theorem \ref{thm:zero} that
$\Supp(\phi\mu_-)=\Supp(\mu_-)$ is
contained in the dual lamination
$L^2(T_+)$ of $T_+$,
so that the converse implication of
Theorem~\ref{thm:zero} implies that $\langle
T_+,\phi\mu_-\rangle=0$. Therefore, by part (2) of Proposition~\ref{prop:zero}, we have $\phi[\mu_-]=[\mu_-]$. Thus
$\Stab_{Out(F_N)}(\Supp(\mu_-))\subseteq \Stab_{Out(F_N)}([\mu_-])$ and hence
$\Stab_{Out(F_N)}(\Supp(\mu_-))=\Stab_{Out(F_N)}([\mu_-])$, as claimed.
\end{proof}

In \cite{BFH97} it is first proved that for an iwip $\phi$ the stabilizer
$\Stab_{Out(F_N)}(\Lambda_\phi^+)$ is virtually cyclic and then that
$\Stab_{Out(F_N)}(\Lambda_\phi^+)=\Stab_{Out(F_N)}([T_+(\phi)])$.
Proposition~\ref{prop:stab} above recovers these results, for a hyperbolic iwip $\phi$, as a consequence of Proposition~\ref{prop:vc}
about virtual cyclicity of $\Stab_{Out(F_N)}([T_+(\phi)])$, for which \cite{KL6} provided an alternative proof to the argument given in \cite{BFH97}.

\begin{prop}\label{prop:dl}
Let $G\subseteq \Out(F_N)$ be a subgroup and such that there exists a hyperbolic iwip $\phi\in G$. Let $[T_+(\phi)],[T_-(\phi)]\in \CVNbar$, $[\mu_+(\phi)],[\mu_-(\phi)]\in \PCurr$ be the attracting and repelling fixed points of $g$ in $\CVNbar$ and $\PCurr$ accordingly.
Then exactly one of the following occurs:
\begin{enumerate}
\item The group $G$ is virtually cyclic and preserves the sets $\{[T_+(\phi)]$, $[T_-(\phi)]\} \subseteq \CVNbar$, $\{[\mu_+(\phi)],[\mu_-(\phi)]\}\subseteq \PCurr$.
\item The group $G$ contains a hyperbolic iwip $\psi=g\phi g^{-1}$ for
  some $g\in G$  such that
  $\{[T_+(\phi)],[T_-(\phi)]\}\cap\{[T_+(\psi)],[T_-(\psi)]\}=\emptyset$ and $\{[\mu_+(\phi)], [\mu_-(\phi)]\}\cap \{[\mu_+(\psi)],[\mu_-(\psi)]\}=\emptyset$. Moreover, in this case there exists $M\ge 1$ such that the subgroup $\langle \phi^M, \psi^M\rangle\subseteq G$ is free of rank two.
\end{enumerate}
\end{prop}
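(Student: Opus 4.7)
My strategy is to dichotomize on whether every element of $G$ setwise preserves the unordered pair $P_T := \{[T_+(\phi)], [T_-(\phi)]\} \subseteq \CVNbar$, and then to transfer the conclusion to the pair $P_\mu := \{[\mu_+(\phi)], [\mu_-(\phi)]\} \subseteq \PCurr$ via Proposition~\ref{prop:stab}.

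The central step is an ``all-or-nothing'' lemma: for any $g\in G$, either $g$ preserves $P_T$, or the set $g(P_T)$ is disjoint from $P_T$. To prove it, suppose first that $g[T_+(\phi)] = [T_+(\phi)]$. Then $g\in \Stab_{Out(F_N)}([T_+(\phi)])$, which by Proposition~\ref{prop:vc} is virtually cyclic containing $\langle \phi \rangle$ with finite index, so $g^k = \phi^m$ for some $k\ge 1$ and $m\ne 0$. Then $g[T_-(\phi)]$ is a fixed point of $g^k = \phi^m$ in $\CVNbar$, so by Proposition~\ref{prop:LL} it lies in $P_T$; since $g$ is a bijection and $g[T_+(\phi)] = [T_+(\phi)]$ already accounts for one of the two points, we must have $g[T_-(\phi)] = [T_-(\phi)]$. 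An analogous argument in the ``swap'' case shows that $g[T_+(\phi)] = [T_-(\phi)]$ forces $g[T_-(\phi)] = [T_+(\phi)]$. Applying the same reasoning to $g^{-1}$ yields the symmetric statement for $[T_-(\phi)]$. Thus if $g\notin \Stab(P_T)$, both of $g[T_+(\phi)]$ and $g[T_-(\phi)]$ lie outside $P_T$, so $g(P_T)\cap P_T = \emptyset$.

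The same argument, replacing Proposition~\ref{prop:LL} by Proposition~\ref{prop:rm}, proves the analogous dichotomy for $P_\mu$. Moreover, combining Proposition~\ref{prop:stab} (which identifies $\Stab([T_+(\phi)])$ with $\Stab([\mu_-(\phi)])$, and $\Stab([T_-(\phi)])$ with $\Stab([\mu_+(\phi)])$) with the all-or-nothing lemma, one checks directly that $\Stab(P_T) = \Stab(P_\mu)$; hence an element $g\in G$ preserves $P_T$ if and only if it preserves $P_\mu$.

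With these facts in hand, the dichotomy of the proposition is immediate. In Case (1), $G\subseteq \Stab(P_T)$. Since $\Stab(P_T)$ contains $\Stab([T_+(\phi)])$ as a subgroup of index at most two, Proposition~\ref{prop:vc} gives that $\Stab(P_T)$ is virtually cyclic; hence so is $G$, and $G$ preserves both $P_T$ and $P_\mu$ as claimed. In Case (2), some $g\in G$ lies outside $\Stab(P_T)$. Set $\psi := g\phi g^{-1}$, which is again a hyperbolic iwip with $[T_\pm(\psi)] = g[T_\pm(\phi)]$ and $[\mu_\pm(\psi)] = g[\mu_\pm(\phi)]$. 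The all-or-nothing lemma (applied separately to $P_T$ and $P_\mu$) gives the required disjointness of the fixed-point pairs of $\phi$ and $\psi$, and Corollary~\ref{cor:pingpong} then produces $M\ge 1$ for which $\langle \phi^M, \psi^M\rangle$ is free of rank two. The two cases are mutually exclusive by construction. The main obstacle is the all-or-nothing lemma, which rules out the a priori possibility that some $g\in G$ fixes one point of $P_T$ while moving the other to a new location; the essential inputs here are the virtual cyclicity of $\Stab([T_+(\phi)])$ from Proposition~\ref{prop:vc} and the uniqueness of projective fixed points of nontrivial powers of $\phi$ from Propositions~\ref{prop:LL} and \ref{prop:rm}.
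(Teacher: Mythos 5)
Your overall strategy mirrors the paper's: dichotomize on whether $G$ preserves the unordered pair $P_T=\{[T_+(\phi)],[T_-(\phi)]\}$, and when it does not, use the virtual cyclicity of pole stabilizers (Proposition~\ref{prop:vc}) together with the fixed-point uniqueness of Propositions~\ref{prop:LL} and~\ref{prop:rm} to produce a conjugate of $\phi$ with a disjoint fixed-point pair. The paper does not isolate your ``all-or-nothing'' lemma as a separate statement, but the argument it runs inside Case~(2) is exactly that lemma.

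There are two places where the details as you have written them do not quite close. First, from $g\in\Stab_{\Out(F_N)}([T_+(\phi)])$ and Proposition~\ref{prop:vc} you conclude $g^k=\phi^m$ with $k\ge 1$ and $m\ne 0$. Virtual cyclicity only gives $g^k\in\langle\phi\rangle$; if $g$ has finite order this forces $m=0$, and then ``$g[T_-(\phi)]$ is a fixed point of $\phi^m$'' is vacuous. Second, the ``swap'' subcase, $g[T_+(\phi)]=[T_-(\phi)]$, is not in fact analogous to the ``fix'' subcase: there $g$ stabilizes neither pole, so no reasoning of the form $g^k\in\langle\phi\rangle$ is available, and the claim $g[T_-(\phi)]=[T_+(\phi)]$ needs a genuinely different argument. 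The paper avoids both pitfalls by reasoning with the conjugate $g^{-1}\phi g$ (equivalently $g\phi g^{-1}$ in your left-action notation) rather than with $g$ itself: being a hyperbolic iwip, it has infinite order, so if it fixes a pole of $\phi$ then some nonzero powers satisfy $(g^{-1}\phi g)^k=\phi^l$, forcing its pole pair to coincide with $\{[T_+(\phi)],[T_-(\phi)]\}$. Re-routing your all-or-nothing lemma through this conjugate repairs both gaps at once, and the same maneuver is what actually substantiates the ``one checks directly'' step you invoke to show $\Stab(P_T)=\Stab(P_\mu)$.
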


\begin{proof}
Recall that by Proposition~\ref{prop:stab} we have
$\Stab_{Out(F_N)}[T_+(\phi)]=$
\\
$\Stab_{Out(F_N)}[\mu_-(\phi)]$ and
$\Stab_{Out(F_N)}[T_-(\phi)]=\Stab_{Out(F_N)}[\mu_+(\phi)]$ and both of
these are virtually cyclic and contain $\langle\phi\rangle$ as a
subgroup of finite index.

If $G$ preserves the set  $\{[T_+(\phi)],[T_-(\phi)]\}$, then $G$ has
a subgroup of index at most 2 that fixes each of $[T_\pm (\phi)]$  and
hence $G$ is virtually cyclic. Thus we may assume that $G$ does not
preserve $\{[T_+(\phi)],[T_-(\phi)]\}$. So there exists $g\in G$ such
that $[T_{+}(\phi)]g\not \in \{[T_+(\phi)],[T_-(\phi)]\}$ or that
$[T_{-}(\phi)]g\not\in \{[T_+(\phi)],[T_-(\phi)]\}$. We assume the
former as the other case is symmetric. Thus $[T_{+}(\phi)]g\ne
[T_\pm(\phi)]$. Note that $\psi=g^{-1}\phi g\in G$ is also an
atoriodal iwip and that $[T_{+}(\psi)]= [T_{+}(\phi)]g$. We claim that
$[T_{-}(\phi)]\ne [T_\pm(\phi)]g$. Indeed, otherwise we have
$[T_-(g^{-1}\phi g)]=[T_\pm (\phi)]$ and hence $g^{-1}\phi g\in
\Stab_{Out(F_N)}[T_+(\phi)]$ or $g^{-1}\phi g\in
\Stab_{Out(F_N)}[T_-(\phi)]$. In either case (since both stabilizers
contain $\langle\phi\rangle$ as subgroup of finite index)
$g^{-1}\phi^kg=\phi^l$ for some $k\ne 0,l\ne 0$ and therefore
$g^{-1}\phi^k g$ has the same fixed points in $\CVNbar$ as does
$\phi^l$, namely, $[T_\pm (\phi)]$. This contradicts the fact that
$g^{-1}\phi g$ fixes the point $[T_+(\phi)]g\ne [T_\pm(\phi)]$. Thus
$[T_\pm(\phi)], [T_\pm(\psi)]$ are four distinct points. Therefore, by
Corollary~\ref{cor:pingpong}, sufficiently high powers $\phi^M, \psi^M$ freely generate a free subgroup of rank two in $G$, as required. Note that in this case we also have that $[\mu_\pm(\phi)], [\mu_\pm(\psi)]$ are four distinct points by Proposition~\ref{prop:zero}.
\end{proof}

\begin{cor}\label{cor:pair}
Let $\phi,\psi\in \Out(F_N)$ be hyperbolic iwips. Then the following
conditions are equivalent:
\begin{enumerate}
\item The subgroup $\langle \phi,\psi\rangle\subseteq \Out(F_N)$ is not
  virtually cyclic.
\item There exist $m,n\ge 1$ such that $\langle \phi^m, \psi^n\rangle$
  is free of rank two.
\item We have $\langle \phi\rangle \cap \langle \psi\rangle=\{1\}$.
\item The points $[T_\pm(\phi)]$, $[T_\pm(\psi)]$ are four distinct
  elements in $\CVNbar$.
\item The points $[\mu_\pm(\phi)], [\mu_\pm(\psi)]$ are four distinct
  elements in $\PCurr$.
\end{enumerate}
\end{cor}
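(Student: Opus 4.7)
The plan is to establish the equivalence of (1)--(5) by short arguments combining Corollary~\ref{cor:pingpong} with the dictionary between fixed trees and fixed currents given by the intersection form. Two implications are essentially immediate: $(2) \Rightarrow (1)$ because a free group of rank two is not virtually cyclic, and $(4) \Rightarrow (2)$ is precisely the content of Corollary~\ref{cor:pingpong}.

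For $(1) \Rightarrow (4)$ I would argue by contrapositive. Since $[T_+(\phi)] \neq [T_-(\phi)]$ and $[T_+(\psi)] \neq [T_-(\psi)]$ (each pair consisting of the two fixed points of a north--south dynamics on $\CVNbar$), any failure of $(4)$ is necessarily a coincidence between a $\phi$-point and a $\psi$-point. Replacing $\phi$ or $\psi$ by its inverse --- an operation that leaves $\langle \phi, \psi\rangle$ unchanged but interchanges the roles of $[T_+]$ and $[T_-]$ for the chosen element --- reduces all four sign cases uniformly to $[T_+(\phi)] = [T_+(\psi)]$. Then both $\phi$ and $\psi$ belong to $\Stab_{\Out(F_N)}([T_+(\phi)])$, which is virtually cyclic by Proposition~\ref{prop:stab}; consequently $\langle \phi, \psi\rangle$ is virtually cyclic, contradicting $(1)$. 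This closes the loop and establishes $(1) \Leftrightarrow (2) \Leftrightarrow (4)$.

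The equivalence $(4) \Leftrightarrow (5)$ is a formal consequence of Proposition~\ref{prop:zero}. Using the same invertibility reduction, any failure of $(4)$ becomes $[T_+(\phi)] = [T_+(\psi)]$; the two trees then differ only by a positive scalar, so they have the same zero set under the intersection pairing with $\Curr(F_N)$. By part~(2) of Proposition~\ref{prop:zero} that zero set is the single projective current $[\mu_-(\phi)] = [\mu_-(\psi)]$, violating $(5)$. The reverse implication is symmetric, using part~(1) of the same proposition.

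Finally, I would incorporate $(3)$. For $\neg(3) \Rightarrow \neg(4)$, suppose $\phi^a = \psi^b$ is nontrivial with $a, b \neq 0$. As a nonzero power of each iwip this common element is itself an iwip, so Proposition~\ref{prop:LL} applied to $\phi^a$ and to $\psi^b$ identifies its fixed set in $\CVNbar$ simultaneously with $\{[T_+(\phi)], [T_-(\phi)]\}$ and with $\{[T_+(\psi)], [T_-(\psi)]\}$, forcing these two-element sets to coincide. Conversely, $\neg(4) \Rightarrow \neg(1)$ by the argument above, and $\neg(1) \Rightarrow \neg(3)$ by the standard observation that in a virtually cyclic group containing $\phi$ and $\psi$, if $\langle c\rangle$ is a cyclic subgroup of finite index $k$ then $\phi^k = c^r$ and $\psi^k = c^s$ for some nonzero integers $r, s$ (nonzero because $\phi, \psi$ have infinite order), whence $\phi^{ks} = c^{rs} = \psi^{kr}$ is a nontrivial common power. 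The only real delicacy in the proof is the case analysis of sign coincidences among the four fixed points, which is neutralized uniformly by the invertibility reduction used in each step.
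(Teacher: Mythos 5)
Your proof is essentially the same as the paper's: both rest on Proposition~\ref{prop:stab} (virtual cyclicity of stabilizers of the fixed trees), Corollary~\ref{cor:pingpong} (ping-pong from four distinct poles), and Proposition~\ref{prop:zero} (the tree/current zero-pairing dictionary for $(4)\Leftrightarrow(5)$). The decomposition into implications differs slightly --- the paper routes through $(3)\Rightarrow(2)$ and $(1)\Rightarrow(3)$, whereas you close a loop $(1)\Rightarrow(4)\Rightarrow(2)\Rightarrow(1)$ and then attach $(3)$ --- but the content is the same. Two small remarks. First, the ``invertibility reduction'' to the case $[T_+(\phi)]=[T_+(\psi)]$ is correct but unnecessary: if \emph{any} one of the four coincidences holds, then $\phi$ and $\psi$ already fix a common point of $\CVNbar$ (since each fixes both of its own poles), so one can pass directly to the common stabilizer without normalizing signs, which is what the paper does. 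Second, in your argument for $\neg(1)\Rightarrow\neg(3)$, the assertion that if $\langle c\rangle$ has index $k$ in $\langle\phi,\psi\rangle$ then $\phi^k\in\langle c\rangle$ is false in general (e.g.\ a reflection in $S_3$ cubed is not in a chosen index-$3$ subgroup); what is true is that some \emph{nonzero} power of each element lies in the normal core of $\langle c\rangle$, which still has finite index, and this salvages the conclusion. Alternatively, and more in the spirit of the rest of your argument, you could replace this step by $\neg(4)\Rightarrow\neg(3)$: if the poles collide, $\phi$ and $\psi$ lie in a common stabilizer which by Proposition~\ref{prop:vc} contains $\langle\phi\rangle$ with finite index, so some nonzero power of $\psi$ lies in $\langle\phi\rangle$, contradicting $(3)$; this avoids the elementary-group-theory detour entirely.
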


\begin{proof}
It is obvious that (2) implies (1) and that (2) implies (3).

Suppose that (3) holds. We claim that $[T_\pm (\phi)]$, $[T_\pm
(\psi)]$ are four distinct points in $\CVNbar$. Indeed, suppose that
one of $[T_\pm (\phi)]$ is equal to one of $[T_\pm
(\psi)]$. This means that both $\phi$ and $\psi$ have a common fixed
point in $\CVNbar$ which is a pole of a hyperbolic iwip. The
$\Out(F_N)$-stabilizer of that point is virtually infinite cyclic by
Proposition~\ref{prop:stab}, which implies that $\phi$ and $\psi$ have
some equal nonzero powers, contradicting the assumption $\langle
\phi\rangle \cap \langle \psi\rangle=\{1\}$. Thus $[T_\pm (\phi)]$, $[T_\pm
(\psi)]$ are four distinct points in $\CVNbar$. Therefore by
Corollary~\ref{cor:pingpong} below, sufficiently high powers $\phi^M, \psi^M$
freely generate a free subgroup of rank two in $\Out(F_N)$, so that
(2) holds. Thus (3) implies (2) and, therefore (2) is equivalent to (3).

Suppose now that (1) holds. Suppose that (3) fails and there exist
nonzero $n,m$ such that $\phi^n=\psi^m$. Since $\phi$ has the same
fixed points in $\CVNbar$ as $\phi^n$ and since $\psi$ has the same
fixed points in $\CVNbar$ as $\psi^n$, it follows that $\phi$ and
$\psi$ have a common fixed point in $\CVNbar$. Therefore $\langle
\phi,\psi\rangle$ is contained in the $\Out(F_N)$-stabilizer of that
point, which, by Proposition~\ref{prop:stab}, is virtually
cyclic. This implies that $\langle
\phi,\psi\rangle$ is virtually cyclic, contrary to our assumption
(1). Thus (1) implies (3), which shows that (1), (2) and (3) are
equivalent.

The fact that (4) and (5) are equivalent follows from
Proposition~\ref{prop:zero}. The proof that (3) implies (2) above also
shows that (3) implies (4). The fact that (4) implies (2) follows from
Corollary~\ref{cor:pingpong}. This completes the proof.
\end{proof}

\begin{cor}\label{cor:4}
Let $\phi\in \Out(F_N)$ be a hyperbolic iwip and let $[T_\pm]$ be the attracting and repelling fixed points of $\phi$ in $\CVNbar$.
Then $\Stab_{Out(F_N)}([T_+])=\Stab_{Out(F_N)}([T_-])$.
\end{cor}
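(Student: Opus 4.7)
The plan is to show that every element of $H := \Stab_{\Out(F_N)}([T_+])$ actually permutes the two-point set $\{[T_+],[T_-]\}$ of fixed points of $\phi$ in $\CVNbar$, and then to use bijectivity of the action together with the assumption $[T_+]g = [T_+]$ to force $[T_-]g = [T_-]$. The reverse inclusion will follow by the symmetric argument applied to the hyperbolic iwip $\phi^{-1}$, whose attracting fixed point is $[T_-]$: Proposition~\ref{prop:stab} applied to $\phi^{-1}$ gives that $K := \Stab_{\Out(F_N)}([T_-])$ is also virtually cyclic and contains $\langle\phi\rangle$ as a subgroup of finite index, which is all that is needed to rerun the argument with the roles of $[T_+]$ and $[T_-]$ interchanged.

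For the main step I would fix $g \in H$ and consider the conjugate $g^{-1}\phi g$, which again lies in $H$ since $\phi, g \in H$. Because $\langle \phi\rangle$ has finite index in the virtually cyclic group $H$ (Proposition~\ref{prop:stab}), some positive power satisfies $(g^{-1}\phi g)^k = \phi^m$, that is, $g^{-1}\phi^k g = \phi^m$ for suitable $k \ge 1$ and $m \in \mathbb Z$; infinite order of $\phi$ in $\Out(F_N)$ forces $m \neq 0$. Both $\phi^k$ and $\phi^m$ are then nonzero powers of a hyperbolic iwip, so Proposition~\ref{prop:LL} tells me that each of them has fixed-point set exactly $\{[T_+],[T_-]\}$ in $\CVNbar$. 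A direct unwinding of the right action of $\Out(F_N)$ on $\CVNbar$ shows that the fixed points of the conjugate $g^{-1}\phi^k g$ are precisely $\{[T_+]g,[T_-]g\}$, so comparing fixed-point sets in the identity $g^{-1}\phi^k g = \phi^m$ gives $\{[T_+]g,[T_-]g\} = \{[T_+],[T_-]\}$. Since $g$ fixes $[T_+]$ and acts bijectively on $\CVNbar$, this forces $[T_-]g = [T_-]$, and therefore $H \subseteq K$.

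I do not expect a genuine obstacle here. The only point that requires care is the bookkeeping for the right action of $\Out(F_N)$ on $\CVNbar$ when tracking how conjugation permutes fixed-point sets, but this reduces to a one-line check from the definitions. Everything else is formal: virtual cyclicity with $\langle \phi\rangle$ of finite index (Proposition~\ref{prop:stab}) guarantees that each element of $H$ has some positive power in $\langle\phi\rangle$, and Proposition~\ref{prop:LL} pins down the fixed-point set of every nonzero power of $\phi$ as the two-point set $\{[T_+],[T_-]\}$.
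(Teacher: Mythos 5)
Your argument is correct, and it takes a genuinely different route from the paper's. The paper proves the corollary by contradiction: assuming $g \in \Stab_{\Out(F_N)}([T_+])$ does not fix $[T_-]$, it forms $G=\langle g,\phi\rangle$, observes that $G$ then fails to preserve $\{[T_+],[T_-]\}$, invokes Proposition~\ref{prop:dl} (together with Corollary~\ref{cor:3}) to conclude that $G$ contains a free subgroup of rank two, and contradicts the virtual cyclicity of $\Stab_{\Out(F_N)}([T_+])$. You instead argue directly: since $\langle\phi\rangle$ has finite index in the virtually cyclic stabilizer $H$, the usual pigeonhole argument on cosets puts a positive power of $g^{-1}\phi g$ inside $\langle\phi\rangle$, giving $g^{-1}\phi^k g=\phi^m$ with $k\ge 1$ and $m\ne 0$; you then compare the fixed-point sets $\{[T_+]g,[T_-]g\}$ and $\{[T_+],[T_-]\}$ of the two sides (both equal two-element sets by North-South dynamics) and use bijectivity to force $[T_-]g=[T_-]$; the reverse inclusion follows by the same argument applied to $\phi^{-1}$. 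Your route avoids appealing to the ping-pong dichotomy of Proposition~\ref{prop:dl} and is thus more self-contained and elementary, essentially inlining the conjugation mechanism that Proposition~\ref{prop:dl} itself relies on; the paper's route is shorter given Proposition~\ref{prop:dl} is already in hand. One small point worth making explicit in your write-up is that $m\ne 0$: this follows because $\phi$ has infinite order, which you do mention; and the fact that the fixed-point set of $\phi^m$ for $m\ne 0$ is exactly $\{[T_+],[T_-]\}$ should be attributed to the North-South dynamics of Proposition~\ref{prop:LL} rather than just to the uniqueness clause (a point any fixed by $\phi^m$ with $[T]\ne[T_-]$ would, under iteration of $\phi^m$, converge to $[T_+]$, forcing $[T]=[T_+]$). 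With those remarks your argument is complete.
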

\begin{proof}
Suppose there exists $g\in \Stab_{Out(F_N)}([T_+])$ such that $g\not\in \Stab_{Out(F_N)}([T_-])$. Thus $[T_-]g\ne [T_-]$. Since $g$ fixes $T_+$, and since $g$ is a homeomorphism of $\CVNbar$, it follows that $[T_-]g\ne [T_+]$ and hence $\{[T_-],[T_+]\}g\ne \{[T_-],[T_+]\}$.
Put $G=\langle g,\psi\rangle$. Then $G$ does not leave the set $\{[T_-],[T_+]\}$ invariant and hence $G$ contains a free subgroup of rank two by Proposition~\ref{prop:dl} and by Corollary~\ref{cor:3}. On the other hand $G\subseteq \Stab_{Out(F_N)}([T_+])$ and hence $G$ is virtually cyclic, yielding a contradiction.

Thus $\Stab_{Out(F_N)}([T_+])\subseteq \Stab_{Out(F_N)}([T_-])$ and hence, by symmetry, we also have $\Stab_{Out(F_N)}([T_-])\subseteq \Stab_{Out(F_N)}([T_+])$.

Therefore $\Stab_{Out(F_N)}([T_+])=\Stab_{Out(F_N)}([T_-])$, as required.
\end{proof}
Note that, in Corollary~\ref{cor:4}, if $[\mu_\pm]$ are the fixed points of $\phi$ in $\PCurr$, then Corollary~\ref{cor:4} and Proposition~\ref{prop:stab} imply that we in fact have
\[
\Stab_{Out(F_N)}([T_+])=\Stab_{Out(F_N)}([\mu_-])=\Stab_{Out(F_N)}([T_-])=\Stab_{Out(F_N)}([\mu_+]).
\]

\section{North-South Dynamics,  standard neighborhoods and height functions}\label{NS}

\begin{conv}\label{conv:phi}
For the remainder of this section, unless specified otherwise, let $\phi\in \Out(F_N)$, where $N\ge 3$, be a hyperbolic iwip and let $[\mu_+], [\mu_-]\in \PCurr$, $[T_+],[T_-]\in \CVNbar$ and $\lambda_+>1$, $\lambda_->1$ be as in Proposition~\ref{prop:rm} and Proposition~\ref{prop:LL}.

Throughout this section we fix the (arbitrarily chosen) non-projectivized representatives $T_+$ of $[T_+]$,  $T_-$ of $[T_-]$,  $\mu_+$ of $[\mu_+]$ and $\mu_-$ of $[\mu_-]$. 

\end{conv}

\begin{defn}[Standard Neighborhoods]\label{defn:nbhds}
Let
\[
U_+=\{[\mu]\in \PCurr: \langle T_-,\mu\rangle < \langle T_+,\mu\rangle\}
\]
and
\[
U_-=\{[\mu]\in \PCurr: \langle T_-,\mu\rangle > \langle T_+,\mu\rangle\}.
\]
\end{defn}

Note that by Proposition~\ref{prop:zero} for any $[\mu]\ne [\mu_\pm]$ we have $\langle T_+,\mu\rangle>0$ and $\langle T_-,\mu\rangle>0$. Therefore the following function is well-defined:

\begin{defn}[Height function]\label{defn:height}
Define
\[
f: \PCurr-\{[\mu_+],[\mu_-]\}\to\mathbb R
\]
as
\[
f([\mu]):=\log \frac{\langle T_+,\mu\rangle}{\langle T_-,\mu\rangle}.
\]
\end{defn}

It is clear that $f: \PCurr-\{[\mu_+],[\mu_-]\}\to\mathbb R$ is continuous and, moreover, if we put $f([\mu_+])=\infty$ and $f([\mu_-])=-\infty$, this gives a continuous extension of $f$ to $\overline f:\PCurr\to\mathbb R\cup\{\pm \infty\}$. We call $\overline f$ \emph{the extended height function}.
Note that $U_+=(\overline f)^{-1}((0,\infty])$ and $U_-=(\overline f)^{-1}([-\infty,0))$.

\begin{lem}\label{lem:shift}
For any $[\mu]\ne [\mu_\pm]$ we have
\[
f(\phi[\mu])=f([\mu])+\log(\lambda_+\lambda_-).
\]
\end{lem}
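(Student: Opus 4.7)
The plan is to apply the defining properties of the intersection form from Proposition~\ref{int}, combined with the eigenrelations for $T_+$ and $T_-$ under $\phi$ from Proposition~\ref{prop:LL}. First I would note that $f$ is well-defined on $\mathbb P\Curr(F_N)$, because by property (1) of the intersection form the ratio $\langle T_+,\mu\rangle / \langle T_-,\mu\rangle$ is unchanged when $\mu$ is replaced by $c\mu$ for $c>0$; also, Proposition~\ref{prop:rm} (combined with the fact that $\phi$ acts as a homeomorphism of $\PCurr$ fixing $\{[\mu_+],[\mu_-]\}$) guarantees that $\phi[\mu]\ne [\mu_\pm]$ whenever $[\mu]\ne [\mu_\pm]$, so $f(\phi[\mu])$ really is defined.

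For the main computation, I would use property~(3) of Proposition~\ref{int} to shift the $\phi$ from the current to the tree: for any $\mu$ representing $[\mu]$,
\[
\langle T_+,\phi\mu\rangle = \langle T_+\phi,\mu\rangle, \qquad \langle T_-,\phi\mu\rangle = \langle T_-\phi,\mu\rangle.
\]
The first eigenrelation $T_+\phi=\lambda_+T_+$ is immediately available. For $T_-$, the eigenrelation given in Proposition~\ref{prop:LL} is $T_-\phi^{-1}=\lambda_-T_-$; applying $\phi$ to both sides and using the action axioms yields $T_- = \lambda_-\, T_-\phi$, so $T_-\phi = \lambda_-^{-1}T_-$. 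Using property~(2) of the intersection form to pull out these positive scalars, I obtain
\[
\langle T_+,\phi\mu\rangle = \lambda_+\langle T_+,\mu\rangle, \qquad \langle T_-,\phi\mu\rangle = \lambda_-^{-1}\langle T_-,\mu\rangle.
\]

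Finally I would take the ratio and apply the logarithm: since $[\mu]\ne [\mu_\pm]$, Proposition~\ref{prop:zero} ensures that both $\langle T_+,\mu\rangle$ and $\langle T_-,\mu\rangle$ are strictly positive, so the division is legitimate and
\[
f(\phi[\mu]) \;=\; \log\frac{\lambda_+\langle T_+,\mu\rangle}{\lambda_-^{-1}\langle T_-,\mu\rangle} \;=\; \log(\lambda_+\lambda_-) + \log\frac{\langle T_+,\mu\rangle}{\langle T_-,\mu\rangle} \;=\; f([\mu]) + \log(\lambda_+\lambda_-),
\]
as claimed. There is essentially no obstacle here: the lemma reduces to a one-line symbol push once the eigenrelation $T_-\phi=\lambda_-^{-1}T_-$ is extracted from the form stated in Proposition~\ref{prop:LL}. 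The only point requiring minor care is making sure the denominators in the height function are nonzero, which is precisely the content of Proposition~\ref{prop:zero}.
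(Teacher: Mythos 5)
Your proof is correct and follows essentially the same computation as the paper: move $\phi$ from the current to the tree via property (3) of Proposition~\ref{int}, substitute the eigenrelations $T_+\phi=\lambda_+T_+$ and $T_-\phi=\lambda_-^{-1}T_-$, pull out the scalars, and take the logarithm of the ratio. The extra remarks about well-definedness of $f(\phi[\mu])$ and positivity of the denominators are sound but left implicit in the paper.
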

\begin{proof}
Let $[\mu]\ne [\mu_\pm]$. Then
\begin{gather*}
f(\phi[\mu])=\log \frac{\langle T_+,\phi\mu\rangle}{\langle T_-,\phi\mu\rangle}=\log \frac{\langle T_+\phi,\mu\rangle}{\langle T_-\phi,\mu\rangle}=\\
=\log \frac{\langle\lambda_+ T_+,\mu\rangle}{\langle\lambda_-^{-1} T_-,\mu\rangle}=\log \big(\lambda_+\lambda_-\frac{\langle T_+,\mu\rangle}{\langle T_-,\mu\rangle}\big)=f([\mu])+\log(\lambda_+\lambda_-).
\end{gather*}
\end{proof}

The continuity of the intersection form and Proposition~\ref{prop:zero} imply that $U_+,U_-$ are disjoint open subsets of $\PCurr$ and that $[\mu_+]\in U_+$ and $[\mu_-]\in U_-$.

\begin{lem}\label{lem:invariant}
We have $\phi(U_+)\subseteq U_+$ and $\phi^{-1}(U_-)\subseteq U_-$.
\end{lem}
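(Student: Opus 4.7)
The plan is to deduce this lemma essentially for free from Lemma~\ref{lem:shift} together with the observation (noted just before the lemma) that, in terms of the extended height function $\overline f$, we have $U_+=\overline f^{-1}((0,\infty])$ and $U_-=\overline f^{-1}([-\infty,0))$. The key numerical input is that both Perron--Frobenius eigenvalues $\lambda_+$ and $\lambda_-$ are strictly greater than $1$, so $c:=\log(\lambda_+\lambda_-)>0$, i.e.\ $\phi$ acts on the range of $f$ by a strictly positive translation.

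First I would treat $\phi(U_+)\subseteq U_+$. Pick $[\mu]\in U_+$. If $[\mu]=[\mu_+]$, then $\phi[\mu_+]=[\mu_+]\in U_+$ by Proposition~\ref{prop:rm}(1). Otherwise $[\mu]\ne[\mu_\pm]$, so $\langle T_+,\mu\rangle,\langle T_-,\mu\rangle>0$ by Proposition~\ref{prop:zero} and $f([\mu])>0$ since $[\mu]\in U_+$. By Lemma~\ref{lem:shift},
\[
f(\phi[\mu])=f([\mu])+\log(\lambda_+\lambda_-)>0,
\]
hence $\phi[\mu]\in U_+$.

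Next I would treat $\phi^{-1}(U_-)\subseteq U_-$ by the symmetric argument. If $[\mu]=[\mu_-]$, then $\phi^{-1}[\mu_-]=[\mu_-]\in U_-$. Otherwise $[\mu]\ne[\mu_\pm]$ and $f([\mu])<0$. Applying Lemma~\ref{lem:shift} to the point $\phi^{-1}[\mu]$ (which is still in $\PCurr\smsm\{[\mu_\pm]\}$, since $[\mu_\pm]$ are fixed by $\phi$), we obtain
\[
f([\mu])=f(\phi\cdot\phi^{-1}[\mu])=f(\phi^{-1}[\mu])+\log(\lambda_+\lambda_-),
\]
so $f(\phi^{-1}[\mu])=f([\mu])-\log(\lambda_+\lambda_-)<0$, and hence $\phi^{-1}[\mu]\in U_-$.

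There is no real obstacle: the only subtlety is remembering that Lemma~\ref{lem:shift} is stated for $[\mu]\ne[\mu_\pm]$, which forces us to dispose of the fixed points by hand using Proposition~\ref{prop:rm}(1); once this is done, positivity of $\log(\lambda_+\lambda_-)$ does all the work.
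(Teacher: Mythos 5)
Your argument is correct, but it routes through the height function and Lemma~\ref{lem:shift}, whereas the paper's own proof of Lemma~\ref{lem:invariant} is a direct computation with the defining inequalities of $U_\pm$: for $[\mu]\in U_+$ one computes $\langle T_-,\phi\mu\rangle=\lambda_-^{-1}\langle T_-,\mu\rangle\le\langle T_-,\mu\rangle<\langle T_+,\mu\rangle\le\lambda_+\langle T_+,\mu\rangle=\langle T_+,\phi\mu\rangle$, and symmetrically for $U_-$. The underlying mechanism is identical (both exploit $T_+\phi=\lambda_+T_+$, $T_-\phi=\lambda_-^{-1}T_-$, and $\lambda_\pm>1$), so the difference is one of packaging rather than substance. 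Your route is arguably tidier in that it reuses Lemma~\ref{lem:shift} instead of redoing the computation, but it costs you a case split at $[\mu]=[\mu_\pm]$ because the height function $f$ is only defined away from the fixed currents; you handle this correctly by appealing to Proposition~\ref{prop:rm}(1) and Proposition~\ref{prop:zero}, but the paper's direct chain of inequalities applies uniformly to all of $U_+$ (it gives $\langle T_-,\phi\mu_+\rangle=0<\langle T_+,\phi\mu_+\rangle$ automatically) and so avoids the case distinction entirely. Both proofs are valid; the paper's is marginally more self-contained, yours leans more on the machinery already built.
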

\begin{proof}
Let $[\mu]\in U_+$ so that $\langle T_-,\mu\rangle < \langle T_+,\mu\rangle$.
We have $\langle T_-,\phi\mu\rangle=\langle T_-\phi,\mu\rangle=\langle \frac{1}{\lambda_-}T_-,\mu\rangle=\frac{1}{\lambda_-}\langle T_-,\mu\rangle$.
Similarly, $\langle T_+,\phi\mu\rangle=\langle T_+\phi,\mu\rangle=\lambda_+\langle T_+,\mu\rangle$.
Hence
\[
\langle T_-,\phi\mu\rangle=\frac{1}{\lambda_-}\langle T_-,\mu\rangle\le  \langle T_-,\mu\rangle < \langle T_+,\mu\rangle\le \lambda_+\langle T_+,\mu\rangle=\langle T_+,\phi\mu\rangle
\]
so that $[\phi\mu]\in U_+$ by definition of $U_+$. The proof that $\phi^{-1}(U_-)\subseteq U_-$ is symmetric.
\end{proof}

Note that Lemma~\ref{lem:invariant} implies that $\phi(\overline{U}_+)\subseteq \overline{U}_+$ and $\phi^{-1}(\overline{U}_-)\subseteq \overline{U}_-$.

\begin{lem}\label{lem:open}
We have:
\begin{enumerate}
\item $\cap_{n=1}^{\infty} \phi^n(\overline{U}_+)=\{[\mu_+]\}$ and $\cap_{n=1}^{\infty} \phi^{-n}(\overline{U}_-)=\{[\mu_-]\}$.
\item For any neighborhood $V$ of $[\mu_+]$ there exists $n\ge 1$ such that $\phi^n(U_+)\subseteq V$ and for any neighborhood $V'$ of $[\mu_-]$ there exists $n\ge 1$ such that $\phi^{-n}(U_-)\subseteq V'$.
\end{enumerate}
\end{lem}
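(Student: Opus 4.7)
The plan is to establish a small separation lemma first, then derive (1) directly from pointwise North--South dynamics, and finally extract (2) from (1) using invariance together with compactness of $\PCurr$.

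The crucial preliminary observation I would prove is that $[\mu_-]\notin \overline{U}_+$ (and symmetrically $[\mu_+]\notin \overline{U}_-$). This comes almost for free: by Proposition~\ref{prop:zero} we have $\langle T_-,\mu_-\rangle>0=\langle T_+,\mu_-\rangle$, so $[\mu_-]\in U_-$; since $U_-$ is open and $U_+\cap U_-=\emptyset$, every point of $U_-$ has an open neighborhood disjoint from $U_+$, hence disjoint from $\overline{U}_+$. In particular $U_-\cap \overline{U}_+=\emptyset$.

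For part (1), the inclusion $\{[\mu_+]\}\subseteq \bigcap_{n\ge 1}\phi^n(\overline{U}_+)$ is immediate, because $\phi$ fixes $[\mu_+]$ and $[\mu_+]\in U_+\subseteq\overline{U}_+$. For the reverse inclusion, suppose $[\nu]\in \bigcap_{n\ge 1}\phi^n(\overline{U}_+)$, so that $\phi^{-n}[\nu]\in \overline{U}_+$ for every $n\ge 1$. If $[\nu]\ne [\mu_+]$, then Proposition~\ref{prop:rm} gives $\phi^{-n}[\nu]\to [\mu_-]$, and since $\overline{U}_+$ is closed we would conclude $[\mu_-]\in \overline{U}_+$, contradicting the preliminary observation. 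Hence $[\nu]=[\mu_+]$. The identity $\bigcap_{n\ge 1}\phi^{-n}(\overline{U}_-)=\{[\mu_-]\}$ is proved by the symmetric argument applied to $\phi^{-1}$.

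For part (2), set $C_n:=\phi^n(\overline{U}_+)$ for $n\ge 1$. Since $\PCurr$ is compact and $\phi^n$ is a homeomorphism of $\PCurr$, each $C_n$ is compact. Lemma~\ref{lem:invariant} gives $\phi(\overline{U}_+)\subseteq \overline{U}_+$, hence $C_{n+1}\subseteq C_n$. By part (1), $\bigcap_{n\ge 1}C_n=\{[\mu_+]\}$. Given any open neighborhood $V$ of $[\mu_+]$, the compact sets $C_n\setminus V$ form a decreasing sequence with empty intersection, so by the finite intersection property there exists $N$ with $C_N\setminus V=\emptyset$, i.e.\ $\phi^N(U_+)\subseteq \phi^N(\overline{U}_+)\subseteq V$. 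The statement for $[\mu_-]$ is obtained by applying the same reasoning to $\phi^{-1}$ in place of $\phi$ and $\overline{U}_-$ in place of $\overline{U}_+$.

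I do not foresee a serious obstacle: the one point that needs care is the separation $[\mu_-]\notin\overline{U}_+$, which drives the contradiction in (1); everything else is a standard nested compact sets argument, and no new dynamical input beyond Proposition~\ref{prop:rm} and Lemma~\ref{lem:invariant} is required.
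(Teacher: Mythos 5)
Your proof is correct and takes essentially the same approach as the paper's: part (1) uses the pointwise North--South dynamics from Proposition~\ref{prop:rm} together with disjointness of $\overline{U}_+$ and $U_-$ (you derive $[\mu_-]\notin\overline{U}_+$ by a slightly more topological phrasing of the same fact, and pass to the limit where the paper stops at a finite $n_0$), and part (2) uses nestedness from Lemma~\ref{lem:invariant} plus compactness (you via the finite intersection property, the paper via sequential compactness, which are interchangeable here).
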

\begin{proof}

(1) Since $[\mu_+]$ is fixed by $\phi$, it follows that $[\mu_+]\in \phi^n(U_+)$ for every $n\ge 1$ so that $[\mu_+]\in  \cap_{n=1}^{\infty} \phi^n{U_+}$. Suppose there exists $[\mu]\in \cap_{n=1}^{\infty} \phi^n\overline{U}_+$ such that $[\mu]\ne [\mu_+]$. Let $[\mu_n]=\phi^{-n}[\mu]$ for $n\ge 1$. Thus $[\mu_n]\in \overline{U}_+$ for every $n\ge 1$. On the other hand $[\mu]\ne [\mu_+]$ implies that $\lim_{n\to\infty} \phi^{-n}[\mu]=[\mu_-]$. Since $U_-$ is open neighborhood of $[\mu_-]$, there exists $n_0\ge 1$ such that $[\mu_{n_0}]=\phi^{-{n_0}}[\mu]\in U_-$. Thus $\mu_{n_0}\in \overline{U}_+\cap U_-$ which contradicts the fact that $\overline{U}_+\subseteq\{ [\nu]\in \PCurr: \langle T_-,\nu\rangle \le \langle T_+,\nu\rangle\}$ and $U_-$ are disjoint.
This establishes part (1) of the lemma.

(2) Let $V$ be an open neighborhood of $[\mu_+]$ and suppose that there is no $n\ge 1$ such that $\phi^n(U_+)\subseteq V$. Then there exists a sequence $[\mu_n]\in \phi^n(U_+)-V$. After passing to a subsequence, we have $[\mu_{n_i}]\to [\mu]$ as $i\to\infty$. Since $\PCurr-V$ is closed, we have $[\mu]\not\in V$. Since $[\mu_{n_i}]\in \phi^{n_i}(\overline{U}_+)$ and since the closed sets $\phi^{n}(\overline{U}_+)$ are nested, it follows that $[\mu]\in \cap_{n\ge 1} \phi^{n}(\overline{U}_+)$. Therefore by part (1) we have $[\mu]=[\mu_+]$, which contradicts the fact that $[\mu]\not\in V$.
\end{proof}

\begin{lem}\label{lem:0}
For any neighborhoods $U$ of $[\mu_+]$ and $V$ of $[\mu_-]$ there exists $M\ge 1$ such that for every $n\ge M$ we have $\phi^n(\PCurr-V)\subseteq U$ and $\phi^{-n}(\PCurr-U)\subseteq V$.
\end{lem}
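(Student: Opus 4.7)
The plan is to use the extended height function $\overline{f}:\PCurr\to [-\infty,\infty]$ from Definition~\ref{defn:height} together with Lemma~\ref{lem:shift}, reducing the statement to two compactness-based threshold estimates. Observe first that it suffices to prove only the inclusion $\phi^n(\PCurr-V)\subseteq U$ for all sufficiently large $n$: the companion inclusion $\phi^{-n}(\PCurr-U)\subseteq V$ follows by a symmetric argument applied to the hyperbolic iwip $\phi^{-1}$, whose attracting and repelling fixed currents in $\PCurr$ are $[\mu_-]$ and $[\mu_+]$ respectively, and whose associated height function is simply $-\overline{f}$ (with the same shift constant $\log(\lambda_+\lambda_-)$ per application, since $\lambda_+(\phi^{-1})\lambda_-(\phi^{-1})=\lambda_-\lambda_+$).

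Next I would extract two finite ``height thresholds'' from compactness of $\PCurr$. Since $\overline{f}^{-1}(\{-\infty\})=\{[\mu_-]\}\subseteq V$ and $\PCurr-V$ is a closed, hence compact, subset of $\PCurr$, continuity of $\overline{f}$ forces the restriction $\overline{f}|_{\PCurr-V}$ to attain its infimum at some finite value
\[ c_1\;:=\;\min_{[\mu]\in\PCurr-V}\overline{f}([\mu])\;\in\;\R. \]
Likewise, the nested family of closed sets $\overline{f}^{-1}([c,\infty])$ (for $c\in\R$) has total intersection equal to $\overline{f}^{-1}(\{\infty\})=\{[\mu_+]\}\subseteq U$, so by compactness of $\PCurr-U$ together with the finite intersection property, there exists a finite $c_0\in\R$ with $\overline{f}^{-1}([c_0,\infty])\subseteq U$.

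Finally I would combine these bounds with Lemma~\ref{lem:shift}. That lemma gives the identity $\overline{f}(\phi^n[\mu])=\overline{f}([\mu])+n\log(\lambda_+\lambda_-)$ for $[\mu]\ne[\mu_\pm]$, and it extends trivially to $[\mu_\pm]$ (both of which are $\phi$-fixed) using the conventions $\pm\infty+c=\pm\infty$. Choosing any integer $M\ge 1$ satisfying $M\log(\lambda_+\lambda_-)\ge c_0-c_1$, for every $n\ge M$ and every $[\mu]\in\PCurr-V$ one obtains
\[ \overline{f}(\phi^n[\mu])\;=\;\overline{f}([\mu])+n\log(\lambda_+\lambda_-)\;\ge\;c_1+n\log(\lambda_+\lambda_-)\;\ge\;c_0, \]
so $\phi^n[\mu]\in\overline{f}^{-1}([c_0,\infty])\subseteq U$, as required.

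The main (and essentially only nontrivial) step is producing the thresholds $c_0$ and $c_1$: this uses the continuity of the intersection form from Proposition~\ref{int} together with part~(2) of Proposition~\ref{prop:zero}, which guarantees that $[\mu_\pm]$ are the only currents at which $\overline{f}$ takes the values $\pm\infty$ and thus that $\overline{f}$ is continuous at these two points when viewed as a map into $[-\infty,\infty]$. Once $c_0$ and $c_1$ are in hand, the rigid linear drift of $\overline{f}$ along $\phi$-orbits given by Lemma~\ref{lem:shift} makes the uniform convergence immediate and completely elementary.
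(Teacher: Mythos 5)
Your proof is correct, and it takes a genuinely different route from the paper's. The paper's argument first shrinks $U$ and $V$ (via Lemma~\ref{lem:open}) until they are disjoint and $\phi$-invariant in the appropriate sense, then covers the compact set $K=\PCurr-(U\cup V)$ by the open sets $\phi^{-n}(U)$ (using the \emph{pointwise} North-South convergence from Proposition~\ref{prop:rm}), extracts a finite subcover, and uses the nesting $\phi(U)\subseteq U$ to convert it into a single bound $K\subseteq\phi^{-p}(U)$. You instead derive the uniform statement directly from the additive cocycle identity of Lemma~\ref{lem:shift} applied to the extended height function $\overline f$: compactness of $\PCurr-V$ and $\PCurr-U$ yield finite height thresholds $c_1$ and $c_0$, and the linear drift $\overline f(\phi^n[\mu])=\overline f([\mu])+n\log(\lambda_+\lambda_-)$ immediately forces $\phi^n(\PCurr-V)$ above the threshold $c_0$ once $n\log(\lambda_+\lambda_-)\ge c_0-c_1$. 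What this buys: an explicit quantitative bound on $M$, no need for Lemma~\ref{lem:open} or for the pointwise convergence of Proposition~\ref{prop:rm}(2) --- you only use the eigenvalue relations and the continuity of $\overline f$ (the latter coming from Proposition~\ref{prop:zero} and continuity of the intersection form), and it makes visible that the height function alone encodes the uniform dynamics. The paper, by contrast, reserves the height function for the cocompactness argument in Corollary~\ref{cor:pd} and keeps Lemma~\ref{lem:0} purely topological. One small clarification worth adding if you polish this: the iterated shift formula $\overline f(\phi^n[\mu])=\overline f([\mu])+n\log(\lambda_+\lambda_-)$ requires noting that $\phi^k[\mu]\ne[\mu_\pm]$ for all $k$ whenever $[\mu]\ne[\mu_\pm]$, which holds because $[\mu_\pm]$ are fixed points; you gesture at this implicitly but it deserves a sentence.
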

\begin{proof}
By applying Lemma~\ref{lem:open} and making $U,V$ smaller if necessary, we may assume that $U$ and $V$ are disjoint open neighborhoods of $[\mu_+]$ and $[\mu_-]$ accordingly such that $\phi(U)\subseteq U$ and $\phi^{-1}(V)\subseteq V$. Let $K=\PCurr-(U\cup V)$. Thus $K$ is a compact subset of $\PCurr$. Since $[\mu_-]\not\in K$, part (2) of Proposition~\ref{prop:rm} implies that
\[
K\subseteq \cup_{n=1}^\infty \phi^{-n}(U).
\]
Since the sets $\phi^{-n}(U)$ are open and $K$ is compact, there exists $p\ge 1$ such that $K\subseteq \cup_{n=1}^p \phi^{-n}(U_+)$. The assumption that $\phi(U)\subseteq U$ implies that $\cup_{n=1}^p \phi^{-n}(U)=\phi^{-p}(U)$. Thus $K\subseteq \phi^{-p}(U)$, so that $\phi^{p}(K)\subseteq U_+$. Since  $\phi^n(U)\subseteq U$ for every $n\ge 1$, it follows that $\phi^n(K)\subseteq U$ for every $n\ge p+1$ and, obviously $\phi^n(U)\subseteq U$ for every $n\ge p+1$. Hence $\phi^n(\PCurr-V)\subseteq U$ for every $n\ge p+1$, as required. The argument for $\phi^{-n}$ is symmetric.
\end{proof}

\begin{cor}\label{cor:pd}
The group $\langle \phi\rangle$ acts properly discontinuously and co-compactly on $\PCurr-\{[\mu_+],[\mu_-]\}$.
\end{cor}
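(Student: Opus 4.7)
The plan is to use the height function $f$ as a kind of ``Busemann function'' for the action of $\langle \phi\rangle$ on $\PCurr - \{[\mu_+],[\mu_-]\}$. By Lemma~\ref{lem:shift}, $\phi$ translates $f$ by the positive constant $c := \log(\lambda_+\lambda_-)$, so the action of $\langle \phi\rangle$ looks like a translation action of $\Z$ on $\R$ via $f$. Both proper discontinuity and cocompactness should follow directly from this picture, with the only serious point being to verify that the slab $\overline{f}^{-1}([0,c])$ is a \emph{compact} fundamental domain in $\PCurr-\{[\mu_+],[\mu_-]\}$.

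First, for proper discontinuity, I would take an arbitrary compact subset $K\subseteq \PCurr-\{[\mu_+],[\mu_-]\}$. Since $f$ is continuous on $\PCurr-\{[\mu_+],[\mu_-]\}$, the image $f(K)$ is contained in some bounded interval $[a,b]\subseteq \R$. If $\phi^n(K)\cap K\neq \emptyset$ for some $n\in \Z$, then some $[\mu]\in K$ satisfies $\phi^n[\mu]\in K$, and by Lemma~\ref{lem:shift}
\[
f(\phi^n[\mu]) \;=\; f([\mu]) + nc \, ,
\]
so both $f([\mu])$ and $f([\mu])+nc$ lie in $[a,b]$, forcing $|n|\le (b-a)/c$. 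Thus only finitely many $n\in \Z$ satisfy $\phi^n(K)\cap K\neq \emptyset$, which is exactly proper discontinuity.

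Next, for cocompactness, set $K_0 := \overline{f}^{-1}([0,c])$, where $\overline{f}\colon \PCurr\to \R\cup\{\pm\infty\}$ is the continuous extension described after Definition~\ref{defn:height}. Since $\overline{f}$ is continuous and $[0,c]\subseteq \R\cup\{\pm\infty\}$ is closed, $K_0$ is closed in the compact space $\PCurr$, hence compact; and since $\overline{f}([\mu_+])=+\infty$ and $\overline{f}([\mu_-])=-\infty$, the set $K_0$ is contained in $\PCurr-\{[\mu_+],[\mu_-]\}$. For any $[\mu]\in \PCurr-\{[\mu_+],[\mu_-]\}$ the value $t:=f([\mu])$ is a finite real number, so we can choose $n\in \Z$ with $t+nc\in [0,c]$; then by Lemma~\ref{lem:shift} we have $f(\phi^n[\mu])=t+nc\in [0,c]$, i.e.\ $\phi^n[\mu]\in K_0$. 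Hence $\langle \phi\rangle\cdot K_0 = \PCurr-\{[\mu_+],[\mu_-]\}$, which together with the previous paragraph yields the corollary.

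The only mildly subtle step is the first one in the cocompactness argument, namely the compactness of $K_0$; everything else is a formal consequence of Lemma~\ref{lem:shift} and the continuity of $\overline{f}$. I do not expect any real obstacle here since the continuous extension $\overline{f}$ has already been set up precisely so that one can use closedness of preimages of bounded intervals in $\R\cup\{\pm\infty\}$.
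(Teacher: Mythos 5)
Your proof is correct, and the cocompactness half is essentially identical to the paper's: both take $K_0=\overline{f}^{-1}([0,\log(\lambda_+\lambda_-)])$, argue compactness via continuity of the extended height function, and use Lemma~\ref{lem:shift} to show $\langle\phi\rangle K_0$ exhausts $\PCurr-\{[\mu_+],[\mu_-]\}$. Where you genuinely diverge is in the proper discontinuity half. The paper deduces it from the uniform North-South dynamics (Lemma~\ref{lem:0}): it picks neighborhoods $U_\pm$ of $[\mu_\pm]$ disjoint from $K$ and observes that for $|n|\ge M$ one has $\phi^n(K)\subseteq U_+\cup U_-$, which misses $K$. You instead deduce proper discontinuity purely from the additive shift property $f(\phi^n[\mu])=f([\mu])+nc$ with $c=\log(\lambda_+\lambda_-)>0$, noting that $f(K)$ is bounded and the shift forces $|n|\le (b-a)/c$. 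Your route is cleaner and more self-contained: both halves of the corollary come out of Lemma~\ref{lem:shift} and the continuity of $\overline{f}$ alone, without invoking the harder dynamical input of Lemma~\ref{lem:0}. One small point worth making explicit (it is implicit in your write-up) is that Lemma~\ref{lem:shift} is stated for a single application of $\phi$; the formula $f(\phi^n[\mu])=f([\mu])+nc$ for all $n\in\Z$ follows by a short induction and by applying the lemma to $\phi^{-1}[\mu]$ for the negative direction, using that $\phi^{\pm 1}$ preserves $\PCurr-\{[\mu_+],[\mu_-]\}$.
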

\begin{proof}
Let $K\subseteq \PCurr-\{[\mu_+],[\mu_-]\}$ be a compact subset. Choose open neighborhoods $U_+$ of $[\mu_+]$ and $U_-$ of $[\mu_-]$ in $\PCurr$ so that the sets $U_+,U_-,K$ are pairwise disjoint. Let $M\ge 1$ be as in Lemma~\ref{lem:0}. Therefore for any $n\in \mathbb Z$ with $|n|\ge M$ we have $K\cap \phi^{n}K=\emptyset$ since $\phi^{n}(K)\subseteq U_+\cup U_-$. This shows that $\langle \phi\rangle$ acts properly discontinuously on $\PCurr-\{[\mu_+],[\mu_-]\}$.

To see that the action of $\langle \phi\rangle$ on $\PCurr-\{[\mu_+],[\mu_-]\}$ is co-compact, put
\[
K:=\{[\mu]\in \PCurr-\{[\mu_+],[\mu_-]\} \mid 0\le f(\mu)\le \log(\lambda_+\lambda_-)\}
\]
where $f$ is the height function as in Definition~\ref{defn:height}. The set $K$ is compact since the extended height function $\overline f:\PCurr\to\mathbb R\cup\{\pm\infty\}$ is continuous and $K=(\overline{f})^{-1}([0, \log(\lambda_+\lambda_-)])$. Lemma~\ref{lem:shift} implies that $\langle \phi\rangle K= \PCurr-\{[\mu_+],[\mu_-]\}$. Thus the action of $\langle \phi\rangle$ on $\PCurr-\{[\mu_+],[\mu_-]\}$ is co-compact, as required.
\end{proof}

We summarize the preceding results in the following:
\begin{prop}\label{prop:summary}
Let $\phi$, $T_+$, $T_-$, $\mu_+$, $\mu_-$ be as in Convention~\ref{conv:phi}. Let $U_+,U_-$ be as in Definition~\ref{defn:nbhds}.
Then the following hold:

\begin{enumerate}
\item $U_+$ is an open neighborhood of $[\mu_+]$ and $U_-$ is an open neighborhood of $[\mu_-]$ in $\PCurr$ such that $\phi(U_+)\subseteq U_+$ and $\phi^{-1}(U_-)\subseteq U_-$.
\item $\cap_{n\ge 1} \phi^n(\overline{U}_+) =\{[\mu_+]\}$ and $\cap_{n\ge 1} \phi^{-n}(\overline{V}_-) =\{[\mu_-]\}$.
\item For any neighborhood $U$ of $[\mu_+]$ there exists $n\ge 1$ such that $\phi^n(U_+)\subseteq U$ and for any neighborhood $U'$ of $[\mu_-]$ there is $n\ge 1$ such that $\phi^{-n}(U_-)\subseteq U'$.
\item For any neighborhoods $U$ of $[\mu_+]$ and $U'$ of $[\mu_-]$ in $\PCurr$ there exists $M\ge 1$ such that for every $n\ge M$ we have $\phi^n(\PCurr-U')\subseteq U$ and $\phi^{-n}(\PCurr-U)\subseteq U'$.
\item The action of $\langle \phi\rangle$ on $\PCurr-\{[\mu_+],[\mu_-]\}$ is properly discontinuous and cocompact.
\end{enumerate}
\end{prop}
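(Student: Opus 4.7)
The statement is a consolidation of results already established earlier in the section, so my plan is not to prove anything new but to verify that each of the five items has been (or immediately follows from) previously stated facts, and to cite them in the correct order.

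The plan is to begin by noting that item (1) has two ingredients. The openness of $U_\pm$ in $\PCurr$ follows from continuity of the intersection form (Proposition~\ref{int}), since $U_\pm$ are the preimages of open half-lines under the continuous map $[\mu]\mapsto \log\langle T_+,\mu\rangle - \log\langle T_-,\mu\rangle$, wherever this is defined; that $[\mu_+]\in U_+$ and $[\mu_-]\in U_-$ is exactly the observation made immediately after Definition~\ref{defn:nbhds}, based on parts (1)--(2) of Proposition~\ref{prop:zero} (one intersection number is zero at $[\mu_\mp]$ while the other is positive). The invariance statements $\phi(U_+)\subseteq U_+$ and $\phi^{-1}(U_-)\subseteq U_-$ are precisely the content of Lemma~\ref{lem:invariant}.

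Next I would dispatch items (2) and (3) by direct reference: they are parts (1) and (2) of Lemma~\ref{lem:open}, respectively. Then item (4) is exactly Lemma~\ref{lem:0}. Finally, item (5) is the statement of Corollary~\ref{cor:pd}, whose proof used the compactness of the level set $\overline{f}^{-1}([0,\log(\lambda_+\lambda_-)])$ together with the shift formula of Lemma~\ref{lem:shift} to produce a compact fundamental domain, and used part (4) above to get proper discontinuity.

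There is no real obstacle here: all of the dynamical input (attraction, invariance, shrinking of neighborhoods, height function, compact fundamental domain) has been isolated in the preceding lemmas precisely so that Proposition~\ref{prop:summary} can be assembled cleanly. The only care required is to make sure the statement of item (4) uses the same version of ``uniformly attracting'' as Lemma~\ref{lem:0} (it does, verbatim), and that no implicit use of Proposition~\ref{uniform} is made, since the whole point of this section is to derive that kind of uniform North--South dynamics from Propositions~\ref{prop:rm} and~\ref{prop:LL} without circular reasoning. With that caveat observed, the proof is just a one-line reference to each of Lemma~\ref{lem:invariant}, Lemma~\ref{lem:open}, Lemma~\ref{lem:0}, and Corollary~\ref{cor:pd}.
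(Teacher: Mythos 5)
Your proposal is correct and matches the paper's intent exactly: Proposition~\ref{prop:summary} is stated without proof, introduced by the phrase ``We summarize the preceding results,'' and the five items correspond precisely to the remark following Definition~\ref{defn:nbhds} (openness and $[\mu_\pm]\in U_\pm$), Lemma~\ref{lem:invariant}, Lemma~\ref{lem:open} parts (1) and (2), Lemma~\ref{lem:0}, and Corollary~\ref{cor:pd}, in the order you cite. Your caution about avoiding circular appeal to Proposition~\ref{uniform} is well placed and consistent with the paper's stated goal for this section.
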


A symmetric argument gives analogous statements for neighborhoods of $[T_+]$ and $[T_-]$ in $\CVNbar$.
(The only difference is that in the proofs the result of Reiner Martin about the North-South dynamics for the action of $\phi$ on $\PCurr$ has to be replaced by the corresponding result of Levitt-Lustig about the North-South dynamics for the action of $\phi$ on $\CVNbar$).

We summarize them in the following:

\begin{prop}\label{prop:sum}
Let $\phi$, $T_+$, $T_-$, $\mu_+$, $\mu_-$ be as in Convention~\ref{conv:phi}. Define
\[
V_+=\{[T]\in \CVNbar: \langle T, \mu_-\rangle < \langle T, \mu_+\rangle \},
\]
\[
V_-=\{[T]\in \CVNbar: \langle T, \mu_-\rangle > \langle T, \mu_+\rangle \}
\]

Then the following hold:

\begin{enumerate}
\item $V_+$ is an open neighborhood of $[T_+]$ and $V_-$ is an open neighborhood of $[T_-]$ in $\CVNbar$ such that $V_+\phi\subseteq V_+$ and $V_-\phi^{-1}\subseteq V_-$.
\item $\cap_{n\ge 1} \overline{V}_+\phi^n =\{[T_+]\}$ and $\cap_{n\ge 1} \overline{V}_-\phi^{-n} =\{[T_-]\}$.
\item For any neighborhood $V$ of $[T_+]$ there exists $n\ge 1$ such that $V_+\phi^n\subseteq V$ and for any neighborhood $V'$ of $[T_-]$ there is $n\ge 1$ such that $V_-\phi^{-n}\subseteq V'$.
\item For any neighborhoods $V$ of $[T_+]$ and $V'$ of $[T_-]$ in $\CVNbar$ there exists $M\ge 1$ such that for every $n\ge M$ we have $(\CVNbar-V')\phi^n\subseteq V$ and $(\CVNbar-V)\phi^{-n}\subseteq V'$.
\item The action of $\langle \phi\rangle$ on $\CVNbar-\{[T_+],[T_-]\}$ is properly discontinuous and cocompact.
\end{enumerate}

\end{prop}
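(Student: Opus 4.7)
The plan is to mirror the proof of Proposition~\ref{prop:summary} verbatim, swapping the roles of currents and trees and replacing Reiner Martin's North-South dynamics (Proposition~\ref{prop:rm}) by Levitt--Lustig's (Proposition~\ref{prop:LL}). The crucial point that makes the translation formal is that property (3) of the intersection form in Proposition~\ref{int} is symmetric in trees and currents once one accounts for the fact that $\Out(F_N)$ acts on $\CVNbar$ on the right and on $\Curr(F_N)$ on the left. First I would define the analog of the height function,
\[
g \colon \CVNbar \setminus \{[T_+], [T_-]\} \to \mathbb{R}, \qquad g([T]) := \log \frac{\langle T, \mu_+\rangle}{\langle T, \mu_-\rangle},
\]
which is well-defined by Proposition~\ref{prop:zero} (both numerator and denominator are strictly positive off of $\{[T_\pm]\}$) and continuous by continuity of the intersection form. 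It extends to a continuous $\bar g \colon \CVNbar \to \mathbb{R} \cup \{\pm\infty\}$ with $\bar g([T_+]) = +\infty$, $\bar g([T_-]) = -\infty$, and one observes $V_+ = \bar g^{-1}((0, +\infty])$, $V_- = \bar g^{-1}([-\infty, 0))$, so both are open disjoint neighborhoods of $[T_+]$ and $[T_-]$ respectively.

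Next I would derive the analog of Lemma~\ref{lem:shift}. Using $\langle T\phi, \mu\rangle = \langle T, \phi\mu\rangle$ together with $\phi \mu_+ = \lambda_+\mu_+$ and $\phi\mu_- = \lambda_-^{-1}\mu_-$, the same computation shows $g([T]\phi) = g([T]) + \log(\lambda_+\lambda_-)$ for all $[T] \neq [T_\pm]$. Invariance (part~1) then follows as in Lemma~\ref{lem:invariant}: for $[T] \in V_+$,
\[
\langle T\phi, \mu_-\rangle = \lambda_-^{-1}\langle T, \mu_-\rangle \leq \langle T, \mu_-\rangle < \langle T, \mu_+\rangle \leq \lambda_+\langle T, \mu_+\rangle = \langle T\phi, \mu_+\rangle,
\]
so $[T]\phi \in V_+$; the claim for $V_-$ is symmetric.

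Parts~2 and~3 are obtained by repeating the proof of Lemma~\ref{lem:open}, substituting Proposition~\ref{prop:LL} for Proposition~\ref{prop:rm}: $[T_+]$ lies in every $\overline V_+\phi^n$, while any putative other point $[T] \in \cap_n \overline V_+\phi^n$ would have $[T]\phi^{-n} \in \overline V_+$ for all $n$, yet $[T]\phi^{-n} \to [T_-] \in V_-$ eventually, contradicting disjointness of $\overline V_+$ and $V_-$; the absorption statement follows from part~2 by a compactness/nested-sets argument identical to Lemma~\ref{lem:open}(2). Part~4 is proved exactly as Lemma~\ref{lem:0}: after shrinking $V, V'$ via part~3 to be invariant, the compact remainder $K = \CVNbar \setminus (V \cup V')$ avoids $[T_-]$ so Proposition~\ref{prop:LL} gives $K \subseteq \cup_n V\phi^{-n}$, and compactness plus nesting yields $K \subseteq V\phi^{-p}$ for some $p$, hence $K\phi^n \subseteq V$ for all $n \geq p$. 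Finally, part~5 follows the proof of Corollary~\ref{cor:pd}: proper discontinuity is immediate from part~4, and the shift formula identifies $\bar g^{-1}([0, \log(\lambda_+\lambda_-)])$ as a compact $\langle\phi\rangle$-fundamental domain in $\CVNbar \setminus \{[T_\pm]\}$. The only potential pitfall is keeping the sides of the action straight, since the tree action is on the right and the current action is on the left, but this is entirely handled by property (3) of Proposition~\ref{int}; no new idea is needed beyond what already appeared in the proof of Proposition~\ref{prop:summary}.
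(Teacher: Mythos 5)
Your proposal is correct and takes precisely the approach the paper intends: the paper itself proves Proposition~\ref{prop:sum} only by remarking that ``a symmetric argument gives analogous statements,'' with Levitt--Lustig's Proposition~\ref{prop:LL} substituted for Reiner Martin's Proposition~\ref{prop:rm}. You have carried out exactly that translation, including the dual height function $g([T]) = \log\bigl(\langle T,\mu_+\rangle / \langle T,\mu_-\rangle\bigr)$, the correct shift formula using $\phi\mu_+=\lambda_+\mu_+$ and $\phi\mu_-=\lambda_-^{-1}\mu_-$, and the observation that property~(3) of Proposition~\ref{int} mediates between the right action on trees and the left action on currents, so nothing further is needed.
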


\begin{rem}\label{rem:uniform}
Note that Proposition~\ref{prop:sum} implies Proposition~\ref{uniform}, and
gives a new proof of the latter, using only the pointwise nature of
the ``North-South'' dynamics for hyperbolic iwips. This in turn also yields
another proof of Corollary~\ref{cor:pingpong}.
\end{rem}

\begin{lem}\label{lem:1}
Let $\phi$, $\mu_+$, $\mu_-$ be as in Convention~\ref{conv:phi}, and let 
$A$ be a free basis of $F_N$. Then there exist an open set $U\subseteq\PCurr$ containing $[\mu_+]$ and an integer $M_0\ge 1$ with the following property:

For every $[\mu]\in U$ and every $n\ge M_0$ we have $\langle T_A, \phi^{n}\mu\rangle\ge 2\langle T_A,\mu\rangle$.

\end{lem}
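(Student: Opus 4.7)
The plan is to build the neighborhood $U$ by combining continuity of the intersection form with the contracting action of $\phi$ near $[\mu_+]$. The key observation is that the ratio
$$g([\mu]) := \frac{\langle T_A, \phi\mu\rangle}{\langle T_A, \mu\rangle}$$
is a well-defined continuous function on the open subset of $\PCurr$ where $\langle T_A, \cdot\rangle > 0$, and $g([\mu_+]) = \lambda_+ > 1$ because $\phi\mu_+ = \lambda_+ \mu_+$ by Proposition~\ref{prop:rm}(3). To ensure that $g$ is defined in a neighborhood of $[\mu_+]$ we use that $T_A \in \cvn$ represents a free and discrete action, so by Theorem~\ref{thm:zero} and the fact that no nonzero current has support in the dual lamination $L^2(T_A)$, we get $\langle T_A, \mu\rangle > 0$ for every $\mu \in \Curr(\FN) \setminus \{0\}$; in particular $g$ is defined and continuous on all of $\PCurr$.

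Next, I would fix some $\alpha \in (1, \lambda_+)$ and use continuity of $g$ at $[\mu_+]$ to obtain an open neighborhood $W$ of $[\mu_+]$ on which $g \ge \alpha$. Applying Proposition~\ref{prop:summary}(3) to $W$, I pick $k \ge 1$ with $\phi^k(U_+) \subseteq W$, and I set $U := \phi^k(U_+)$. Then $U$ is an open neighborhood of $[\mu_+]$ contained in $W$, and applying $\phi^k$ to the inclusion $\phi(U_+) \subseteq U_+$ from Lemma~\ref{lem:invariant} yields $\phi(U) \subseteq U$. Hence for any $[\mu] \in U$ and any $j \ge 0$ we have $\phi^j[\mu] \in U \subseteq W$, so $g(\phi^j[\mu]) \ge \alpha$.

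The desired bound now follows from a telescoping product. For any representative $\mu$ of $[\mu] \in U$ and any $n \ge 1$,
$$\langle T_A, \phi^n\mu\rangle = \prod_{j=0}^{n-1} \frac{\langle T_A, \phi^{j+1}\mu\rangle}{\langle T_A, \phi^j\mu\rangle} \cdot \langle T_A, \mu\rangle = \prod_{j=0}^{n-1} g(\phi^j[\mu]) \cdot \langle T_A, \mu\rangle \ge \alpha^n \langle T_A, \mu\rangle.$$
Choosing $M_0 \ge 1$ with $\alpha^{M_0} \ge 2$ then gives $\langle T_A, \phi^n \mu\rangle \ge 2\langle T_A, \mu\rangle$ for every $n \ge M_0$ and every $[\mu] \in U$, as required.

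The only subtle point in the argument is the positivity statement $\langle T_A, \mu\rangle > 0$ for $\mu \ne 0$, which makes $g$ globally well-defined and continuous on $\PCurr$; once this is in hand, the rest of the proof is a routine combination of continuity of the intersection form and the contraction properties of $\phi$ near $[\mu_+]$ already established earlier in this section.
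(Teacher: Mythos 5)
Your proof is correct, and it takes a genuinely different route from the paper's. The paper establishes the lemma by working directly with a train-track representative $f\colon\Gamma\to\Gamma$ of $\phi$: it invokes bounded cancellation to get constants $L_1$, $M_1$, $\lambda_1$ controlling the growth of legal subsegments of length $\ge L_1$ under iteration of $f$, defines $U$ as the set of projective currents whose \emph{frequencies} of legal paths (in the chart given by $\Gamma$) are close to those of $\mu_+$ and whose frequencies of illegal turns are small, and then estimates the growth of $\langle T_\Gamma,\phi^n\mu\rangle$ on rational currents in $U$ before passing to $T_A$ via bi-Lipschitz comparison of translation length functions. Your argument sidesteps all of that: you observe that the ratio $g([\mu]) = \langle T_A,\phi\mu\rangle/\langle T_A,\mu\rangle$ is a well-defined continuous function on $\PCurr$ (using that $\langle T_A,\mu\rangle>0$ for all $\mu\ne 0$, which holds since $T_A$ is a tree in $\cvn$ so $L^2(T_A)=\emptyset$, and which the paper itself uses elsewhere, e.g.\ at the end of the proof of Proposition~\ref{crucial-lemma}), that $g([\mu_+])=\lambda_+>1$, and you then push the standard neighborhood $U_+$ forward by $\phi^k$ to get a $\phi$-forward-invariant open neighborhood of $[\mu_+]$ contained in $\{g\ge\alpha\}$, on which a telescoping product gives exponential growth. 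Your route is shorter and relies only on the intersection form and the dynamics of the standard neighborhoods already set up in Section~4, avoiding the train-track/bounded-cancellation machinery entirely; the paper's route is more quantitative about \emph{why} the growth is exponential (it identifies the mechanism at the level of legal segments), which is information your softer continuity argument does not provide, but both establish exactly the stated lemma.
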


\begin{proof}

Let $f:\Gamma\to \Gamma$ be a train-track representative of $\phi$. There is an integer $L_1\ge 1$, an integer $M_1\ge 1$ and a constant $\lambda_1> 1$ such that the following holds. If $\gamma=\gamma_1\gamma_2\gamma_3$ is a reduced concatenation of three reduced paths in $\Gamma$ where $\gamma_2$ is legal of simplicial length $\ge L_1$ then for any $n\ge M_1$ in the cancellation between the tightened forms of $f^n\gamma_1$, $f^n\gamma_2$, $f^n\gamma_3$, a segment of $f^n\gamma_2$ of simplicial length at least $\lambda_1|\gamma_2|$ survives.

Let $U$ be the neighborhood of $[\mu_+]$ defined by the condition that for $[\mu]\in U$ the frequencies with respect to the simplicial metric on $\Gamma$ of all legal paths of simplicial length $L_1$ in $[\mu]$ are almost the same as they are for the corresponding frequencies in $[\mu_+]$ and the frequencies of illegal turns are sufficiently close to zero.  Here for a reduced path $v$ in $\Gamma$ and a nonzero current $\mu$ by a \emph{frequency} of $v$ in $\mu$ we mean the ratio $\frac{\langle v,\mu\rangle_\Gamma}{\langle T_\Gamma, \mu\rangle}$. It is easy to see that this frequency depends only on $T_\Gamma$ and the projective class $[\mu]$ of $\mu$. See \cite{Ka2} for more details.  Then for any rational current $[\eta_w]\in U$, where $w$ is a reduced cyclic path in $\Gamma$, when we write $w$ as a concatenation of maximal legal segments, the legal segments of length $\ge L_1$ each will constitute at least $1/2$ of the simplicial length of $w$. Then for any $k\ge 1$ there is $M\ge 1$ independent of $w$ such that $|[f^nw]|\ge 2^k|w|$ for every $w$ as above and every $n\ge M$. Therefore for every $[\mu]\in U$ we have $\langle T_\Gamma, \phi^n\mu\rangle \ge 2^k \langle T_\Gamma,\mu\rangle$ for every $n\ge M$. Here $T_\Gamma$ is the universal cover of $\Gamma$ with the simplicial metric. Since the translation length functions on $\FN$ corresponding to the trees $T_A$ and on $T_\Gamma$ are bi-Lipschitz equivalent, it follows that, by choosing a sufficiently large $k$, there exists $M_0\ge 1$ such that for every $n\ge M_0$ and every $[\mu]\in U$ we have $\langle T_A, \phi^{n}\mu\rangle\ge 2\langle T_A,\mu\rangle$, as required.
\end{proof}

\begin{cor}\label{cor:2}
Let $\phi$, $\mu_+$, $\mu_-$ be as in Convention~\ref{conv:phi}, and let  
$A$ be a free basis of $F_N$.

Then for any neighborhood $V$ of $[\mu_-]$ in $\PCurr$
there exists an integer $M_1=M_1(V,\phi)\ge 1$ such that for every
$[\mu]\in \PCurr-V$, for every
$[T]\in \CVNbar-U$  and any $n\ge M_1$ we have $\langle
T_A, \phi^n\mu\rangle\ge 2\langle T_A,\mu\rangle$.
\end{cor}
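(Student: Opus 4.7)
The plan is to combine the pointwise ``doubling'' estimate of Lemma~\ref{lem:1}, which holds on some open neighborhood $U$ of $[\mu_+]$, with the uniform North-South dynamics from Lemma~\ref{lem:0} (equivalently, Proposition~\ref{prop:summary}(4)), which forces all of $\PCurr-V$ into $U$ after sufficiently many iterations of $\phi$. The doubling can then be iterated along the forward $\phi$-orbit of $[\mu]$ inside $U$, and a single compactness-based estimate absorbs the remaining finite transient.

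First, I apply Lemma~\ref{lem:1} to produce an open neighborhood $U\ni [\mu_+]$ and an integer $M_0\ge 1$ with the stated doubling property. Then I apply Lemma~\ref{lem:0} to the pair $(U,V)$ to obtain $M_2\ge 1$ such that $\phi^n(\PCurr-V)\subseteq U$ for every $n\ge M_2$. Now fix $[\mu]\in \PCurr-V$ and, for $n\ge M_2+M_0$, set $k=\lfloor (n-M_2)/M_0\rfloor$ and $r=n-kM_0\in [M_2, M_2+M_0)$. For each $j\in\{0,1,\dots,k-1\}$ one has $n-(j+1)M_0\ge M_2$, hence $[\phi^{n-(j+1)M_0}\mu]\in U$, so Lemma~\ref{lem:1} applied to this current with exponent $M_0$ gives
\[
\langle T_A,\phi^{n-jM_0}\mu\rangle\ge 2\,\langle T_A,\phi^{n-(j+1)M_0}\mu\rangle.
\]
Telescoping over $j=0,\dots,k-1$ yields $\langle T_A,\phi^n\mu\rangle\ge 2^k\langle T_A,\phi^r\mu\rangle$.

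To convert this into a bound in terms of $\langle T_A,\mu\rangle$, I use a compactness argument. For each fixed $r\in\{M_2,M_2+1,\dots,M_2+M_0-1\}$ the function
\[
[\nu]\mapsto \frac{\langle T_A,\phi^r\nu\rangle}{\langle T_A,\nu\rangle}=\frac{\langle T_A\phi^r,\nu\rangle}{\langle T_A,\nu\rangle}
\]
is projectively well-defined and continuous on $\PCurr$. It is also strictly positive: indeed $T_A,\,T_A\phi^r\in\cvn$ act freely and discretely, so their dual laminations are empty and Theorem~\ref{thm:zero} gives $\langle T_A,\nu\rangle>0$ and $\langle T_A\phi^r,\nu\rangle>0$ for every nonzero current $\nu$. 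Since $\PCurr$ is compact, this ratio attains a strictly positive minimum, and taking the minimum over the finitely many admissible $r$ produces a constant $c>0$ with $\langle T_A,\phi^r\mu\rangle\ge c\,\langle T_A,\mu\rangle$ uniformly. Combining,
\[
\langle T_A,\phi^n\mu\rangle\ge 2^k c\,\langle T_A,\mu\rangle,
\]
and choosing $M_1$ large enough that $k=\lfloor (n-M_2)/M_0\rfloor$ satisfies $2^k c\ge 2$ for every $n\ge M_1$ completes the argument.

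The only delicate point is the bookkeeping required to iterate Lemma~\ref{lem:1} while keeping the final estimate expressed in terms of the \emph{original} current $\mu$: the uniform North-South property of Lemma~\ref{lem:0} is exactly what guarantees that the entire tail of the orbit stays in $U$ so that the doubling can be iterated $k$ times, and the compactness estimate on $\PCurr$ (justified by Theorem~\ref{thm:zero} because $T_A\in\cvn$ acts freely) cleanly absorbs the bounded initial portion of length $r<M_2+M_0$.
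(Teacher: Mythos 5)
Your proof is correct and follows essentially the same route as the paper's: produce the neighborhood $U$ and $M_0$ from Lemma~\ref{lem:1}, push $\PCurr-V$ into $U$ via Lemma~\ref{lem:0}, iterate the doubling estimate along the tail of the $\phi$-orbit, and absorb the bounded transient with a compactness constant. The only (cosmetic) difference is that the paper first shrinks $U$ to be $\phi$-forward-invariant via Lemmas~\ref{lem:invariant} and~\ref{lem:open} before iterating, whereas you sidestep this by observing that Lemma~\ref{lem:0} already keeps the entire tail of the orbit inside $U$; your explicit continuity-plus-compactness justification of the constant $c$ (via Theorem~\ref{thm:zero} and the fact that $T_A\in\cvn$ acts freely) spells out a step the paper leaves implicit.
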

\begin{proof}

Let $U$ and $M_0$ be provided by Lemma~\ref{lem:1}. By making $V$ smaller we may assume that $V$ is disjoint from $U$.
By making $U$ smaller, via application of Lemma~\ref{lem:invariant} and Lemma~\ref{lem:open} we may assume that, in addition to the conditions guaranteed by Lemma~\ref{lem:1}, we have $\phi(U)\subseteq U$. Note that since $\phi(U)\subseteq U$, then for every $[\nu]\in U$ and every integer $k\ge 1$ we have $\langle T_A, \phi^{kM_0}\nu\rangle \ge 2^k \langle T_A, \nu\rangle$.

By Lemma~\ref{lem:0} there exists $M\ge 1$ such that $\phi^{n}(\PCurr-V)\subseteq U$ for every $n\ge M$.
Since $M$, $M_0$ are fixed, there exists $C>0$ such that for every $\mu\in \Curr(F_N)$ we have $\langle T_A,\phi^{M}\mu\rangle\ge C\langle T_A,\mu\rangle$. Let $k\ge 1$ be an arbitrary integer.
Let $[\mu]\in \PCurr-V$ be arbitrary. We have $\phi^{n}([\mu])\in U$ for every $n\ge M$.   We also have $\langle T_A,\phi^{M}\mu\rangle\ge C\langle T_A,\mu\rangle$ and $\langle T_A,\phi^{kM_0+M}\mu\rangle \ge 2^k\langle T_A, \phi^{M}\mu\rangle$ by Lemma~\ref{lem:1}. Therefore $\langle T_A,\phi^{kM_0+M}\mu\rangle \ge 2^kC\langle T_A,\mu\rangle$. The statement of the corollary now easily follows.
\end{proof}

\begin{cor}\label{cor:3}
Let $\phi,\psi\in \Out(F_N)$ be hyperbolic iwips such that $\langle\phi,\psi\rangle$ is not virtually cyclic. Then there exists $M\ge 1$ such that for any $n,m\ge M$ the subgroup $\langle \phi^n, \psi^m\rangle$ is free of rank two.
\end{cor}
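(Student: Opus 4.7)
The plan is to derive Corollary~\ref{cor:3} as an essentially immediate consequence of two earlier results, namely Corollary~\ref{cor:pair} and Corollary~\ref{cor:pingpong}, so the argument should fit in a very short paragraph. No new ideas are needed; the heart of the work is already packed into the North-South dynamics statements of Proposition~\ref{prop:sum} and the height-function machinery of Section~\ref{NS}, from which Corollary~\ref{cor:pingpong} is produced.

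First, I would use the hypothesis that $\langle \phi,\psi\rangle$ is not virtually cyclic together with the equivalence (1)$\Leftrightarrow$(4) in Corollary~\ref{cor:pair}. This tells us that the four points $[T_+(\phi)], [T_-(\phi)], [T_+(\psi)], [T_-(\psi)]$ in $\CVNbar$ are pairwise distinct. (Equivalently, by (4)$\Leftrightarrow$(5) in that corollary, the four projective currents $[\mu_\pm(\phi)], [\mu_\pm(\psi)]$ are pairwise distinct in $\PCurr$; either form of the conclusion is enough.)

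Next, I would feed this directly into Corollary~\ref{cor:pingpong}, which is a standard ping-pong statement: given two hyperbolic iwips whose four attracting/repelling fixed trees (or currents) are distinct, there is an integer $M\ge 1$ such that for all $m,n\ge M$ the elements $\phi^m$ and $\psi^n$ freely generate a rank-two free subgroup of $\Out(F_N)$. This is exactly the conclusion of Corollary~\ref{cor:3}, so nothing further is required.

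There is essentially no obstacle in this particular step: the substantive work has already been done, namely establishing the uniform ``North-South'' dynamics (Proposition~\ref{prop:sum} and the pointwise-to-uniform upgrade behind it) and the virtual-cyclicity/stabilizer facts (Proposition~\ref{prop:stab} and Proposition~\ref{prop:vc}) that power Corollary~\ref{cor:pair}. The only thing one must take care of is to cite the right equivalence from Corollary~\ref{cor:pair} to get disjoint fixed-point sets, after which Corollary~\ref{cor:pingpong} finishes the proof in one line.
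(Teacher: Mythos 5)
Your proposal matches the paper's proof exactly: deduce from Corollary~\ref{cor:pair} that $[T_\pm(\phi)]$, $[T_\pm(\psi)]$ are four distinct points of $\CVNbar$, and then conclude via Corollary~\ref{cor:pingpong}. No gaps or deviations.
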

\begin{proof}
Corollary~\ref{cor:pair} implies that $[T_{\pm}(\phi)]$ and
$[T_{\pm}(\psi)]$ are four distinct points of $\CVNbar$. Therefore the
statement follows from Corollary~\ref{cor:pingpong}.
\end{proof}

Note that by Proposition~\ref{prop:zero} for hyperbolic iwips $\phi,\psi$ the assumption that $[T_{\pm}(\phi)]$, $[T_{\pm}(\psi)]$ are four distinct points of $\CVNbar$ is equivalent to the condition that $[\mu_{\pm}(\phi)]$, $[\mu_{\pm}(\psi)]$ are four distinct points of $\PCurr$.

\begin{lem}\label{lem:3-4}
Let $\phi,\psi\in \Out(F_N)$ be hyperbolic iwips such that $\langle \phi,\psi\rangle\subseteq \Out(F_N)$ is not virtually cyclic. Let $A$ be a free basis of $F_N$. Then there exists $M\ge 1$ with the following property:

For any $\mu \in \Curr(F_N)$ and for any $n,m\ge M$, for at least three out of four elements $\alpha$ of $\{\phi^n, \psi^m,\phi^{-n}, \psi^{-m}\}$ we have
\[
2\langle T_A,\mu\rangle\le \langle T_A,\alpha\mu\rangle.
\]
\end{lem}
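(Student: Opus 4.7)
The plan is to combine the expansion estimate of Corollary~\ref{cor:2}, applied separately to each of $\phi$, $\phi^{-1}$, $\psi$, $\psi^{-1}$, with the fact (guaranteed by Corollary~\ref{cor:pair} under the hypothesis that $\langle \phi,\psi\rangle$ is not virtually cyclic) that the four points $[\mu_\pm(\phi)], [\mu_\pm(\psi)]$ are pairwise distinct in $\PCurr$. Each of $\phi, \phi^{-1}, \psi, \psi^{-1}$ is itself a hyperbolic iwip, and Corollary~\ref{cor:2} tells us that each of these elements stretches $\langle T_A,\cdot\rangle$ by a factor of at least $2$ on the complement of any chosen open neighborhood of its \emph{repelling} pole, provided one takes a sufficiently high power.

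First I would use the distinctness of the four poles to pick pairwise disjoint open neighborhoods $V_{\phi,-}, V_{\phi,+}, V_{\psi,-}, V_{\psi,+}\subseteq \PCurr$ of $[\mu_-(\phi)], [\mu_+(\phi)], [\mu_-(\psi)], [\mu_+(\psi)]$, respectively. Next I would apply Corollary~\ref{cor:2} four times: to $\phi$ with the neighborhood $V_{\phi,-}$ of its repelling pole $[\mu_-(\phi)]$, obtaining a threshold $M_1$; to $\phi^{-1}$ with $V_{\phi,+}$ (which is a neighborhood of the repelling pole of $\phi^{-1}$), obtaining $M_2$; to $\psi$ with $V_{\psi,-}$, obtaining $M_3$; and to $\psi^{-1}$ with $V_{\psi,+}$, obtaining $M_4$. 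I would then set $M:=\max\{M_1,M_2,M_3,M_4\}$.

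To conclude, let $\mu\in \Curr(F_N)$ and let $n,m\ge M$. The case $\mu=0$ is trivial, so assume $\mu\ne 0$. The inequality $2\langle T_A,\mu\rangle\le \langle T_A,\alpha\mu\rangle$ is invariant under replacing $\mu$ by a positive scalar multiple, so it may be tested on the projective class $[\mu]\in\PCurr$. Because the four neighborhoods $V_{\phi,\pm}, V_{\psi,\pm}$ are pairwise disjoint, $[\mu]$ lies in at most one of them. Hence for at least three of the four elements $\alpha\in\{\phi^n,\phi^{-n},\psi^m,\psi^{-m}\}$, the class $[\mu]$ avoids the "bad" neighborhood associated with $\alpha$, and the corresponding instance of Corollary~\ref{cor:2} yields $\langle T_A,\alpha\mu\rangle\ge 2\langle T_A,\mu\rangle$ for those $\alpha$.

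There is essentially no obstacle here: the statement is a clean pigeonhole packaging of material already developed earlier in the section. The only mild subtlety is bookkeeping of which pole is the repelling one for $\phi^{-1}$ and $\psi^{-1}$ (namely $[\mu_+(\phi)]$ and $[\mu_+(\psi)]$, respectively), so that one matches each element with the correct "bad" neighborhood when invoking Corollary~\ref{cor:2}.
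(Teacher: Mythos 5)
Your proposal is correct and follows essentially the same route as the paper: choose four pairwise disjoint open neighborhoods of the four distinct eigencurrents (the distinctness coming from the non-virtually-cyclic hypothesis via Corollary~\ref{cor:pair}), apply Corollary~\ref{cor:2} to each of $\phi$, $\phi^{-1}$, $\psi$, $\psi^{-1}$ with the neighborhood of its repelling pole, take the maximum of the four resulting thresholds, and conclude by the pigeonhole observation that $[\mu]$ can lie in at most one of the disjoint neighborhoods. Your bookkeeping of which pole is repelling for $\phi^{-1}$ and $\psi^{-1}$ matches the paper exactly.
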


\begin{proof}
Since $\langle \phi,\psi\rangle\subseteq \Out(F_N)$ is not virtually cyclic, the four eigencurrents $[\mu_+(\phi)]$, $[\mu_+(\psi)]$, $[\mu_-(\phi)]$, $[\mu_-(\psi)]$ are four distinct elements in $\PCurr$. Take four disjoint open neighborhoods $U_+(\phi)$, $U_+(\psi)$, $U_-(\phi)$, $U_-(\psi)$.

Let $M\ge 1$ be the maximum of the the constants $M_1(U_-(\phi),\phi)$, $M_1(U_-(\psi),\psi)$, $M_1(U_+(\phi),\phi^{-1})$, $M_1(U_+(\psi),\psi^{-1})$ provided by Corollary~\ref{cor:2}. Let $\mu\in \Curr(F_N),\mu\ne 0$ be arbitrary. Then there are at least three of the four sets  $U_+(\phi)$, $U_+(\psi)$, $U_-(\phi)$, $U_-(\psi)$ that $[\mu]$ does not belong to, and the statement of the lemma follows from Corollary~\ref{cor:2}
\end{proof}

Lemma~\ref{lem:3-4} immediately implies:

\begin{cor}\label{cor:3-4}
Let $\phi,\psi\in \Out(F_N)$ be hyperbolic iwips such that $\langle \phi,\psi\rangle\subseteq \Out(F_N)$ is not virtually cyclic. Let $A$ be a free basis of $F_N$. Then there exists $M\ge 1$ with the following property: For any $w\in F_N$ and any $n,m\ge M$, for at least three out of four elements $\alpha$ of $\{\phi^n, \psi^m,\phi^{-n}, \psi^{-m}\}$ we have
\[
2||w||_A\le ||\alpha(w)||_A. 
\]
\end{cor}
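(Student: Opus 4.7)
The plan is to deduce Corollary~\ref{cor:3-4} from Lemma~\ref{lem:3-4} by specializing the arbitrary current $\mu \in \Curr(F_N)$ to a counting current $\eta_w$ and then translating the resulting intersection-number inequality into a word-length inequality via part (4) of Proposition~\ref{int}.

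Let $M \ge 1$ be the constant provided by Lemma~\ref{lem:3-4}, and fix $n, m \ge M$. Let $w \in F_N$ be arbitrary. If $w = 1$ then $\|w\|_A = 0$ and $\|\alpha(w)\|_A = 0$ for every $\alpha$, so the desired inequality holds trivially. Suppose now $w \neq 1$, and consider the counting current $\mu := \eta_w \in \Curr(F_N)$. Applying Lemma~\ref{lem:3-4} to this $\mu$, for at least three of the four $\alpha \in \{\phi^n, \psi^m, \phi^{-n}, \psi^{-m}\}$ we have
\[
2 \langle T_A, \eta_w \rangle \;\le\; \langle T_A, \alpha\, \eta_w \rangle.
\]

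To translate this into a statement about word lengths, recall two basic properties. First, by part (4) of Proposition~\ref{int}, $\langle T_A, \eta_w\rangle = \|w\|_{T_A} = \|w\|_A$. Second, as reviewed in Section~2.2, counting currents transform equivariantly under $\Out(F_N)$: picking any lift $\widetilde\alpha \in \Aut(F_N)$ of $\alpha$ we have $\alpha\, \eta_w = \eta_{\widetilde\alpha(w)}$, and since $\eta_g$ only depends on the conjugacy class of $g$, this equality is independent of the lift. Hence
\[
\langle T_A, \alpha\, \eta_w \rangle \;=\; \langle T_A, \eta_{\widetilde\alpha(w)}\rangle \;=\; \|\widetilde\alpha(w)\|_{T_A} \;=\; \|\alpha(w)\|_A,
\]
where $\|\alpha(w)\|_A$ denotes the cyclically reduced length of any lift of the conjugacy class $\alpha([w])$, which is the natural interpretation of this expression at the outer level. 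Substituting these two identities into the inequality above yields $2\|w\|_A \le \|\alpha(w)\|_A$ for at least three of the four $\alpha$, which is precisely the required statement.

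There is no real obstacle here: the proof is essentially a one-line specialization of Lemma~\ref{lem:3-4} combined with the defining compatibilities of the intersection form with counting currents and the $\Out(F_N)$-action. The only minor point to verify is the trivial $w = 1$ case, and the (standard) observation that the outer-class ambiguity in $\alpha(w)$ is harmless because both $\eta_{\widetilde\alpha(w)}$ and $\|\widetilde\alpha(w)\|_A$ are invariant under conjugation of $\widetilde\alpha(w)$.
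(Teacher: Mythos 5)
Your proof is correct and is exactly the one-line deduction the paper has in mind when it says that Lemma~\ref{lem:3-4} ``immediately implies'' Corollary~\ref{cor:3-4}: specialize $\mu$ to $\eta_w$, use $\langle T_A,\eta_w\rangle=\|w\|_A$ and $\alpha\eta_w=\eta_{\alpha(w)}$ from Proposition~\ref{int}(4) and the $\Out(F_N)$-equivariance of counting currents, and note the trivial $w=1$ case. Nothing is missing.
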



Via a standard argument (c.f. the proof of Theorem~5.2 in \cite{BFH97}), Corollary~\ref{cor:3-4} and the Bestvina-Feighn Combination Theorem~\cite{BF92} imply:
\begin{thm}\label{thm:hyperb}
Let $N\ge 3$ and let $\phi,\psi\in \Out(F_N)$ be hyperbolic iwips such that $\langle \phi,\psi\rangle\subseteq \Out(F_N)$ is not virtually cyclic and let $\Phi,\Psi\in \Aut(F_N)$ be such that $\Phi$ represents $\phi$ and $\Psi$ represents $\psi$. There exists $M\ge 1$ such that for any $n,m\ge M$ the subgroup $\langle \phi^n, \psi^m\rangle\subseteq \Out(\FN)$ is free of rank two, every nontrivial element of this subgroup is hyperbolic and the group
\[
G_{n,m}=\langle F_N, t,s| t^{-1}wt=\Phi^n(w), s^{-1}ws=\Psi^m(w) \text{ for every } w\in F_N\rangle
\]
is word-hyperbolic.
\end{thm}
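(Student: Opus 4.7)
The plan is to apply the Bestvina-Feighn Combination Theorem~\cite{BF92} to the multiple HNN-extension structure of $G_{n,m}$ (base $F_N$, two stable letters $t,s$ conjugating by $\Phi^n$ and $\Psi^m$), using the length-doubling estimate of Corollary~\ref{cor:3-4} to verify the annuli flaring hypothesis. Freeness of $\langle \phi^n, \psi^m\rangle$ (and correspondingly of $\langle \Phi^n, \Psi^m\rangle \subseteq \Aut(F_N)$) for $n,m$ sufficiently large is already supplied by Corollary~\ref{cor:3}, so $G_{n,m}$ can be assumed to be a genuine multi-HNN extension.

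The heart of the argument is the flaring verification. Take $M$ as in Corollary~\ref{cor:3-4}; I claim that for any $n,m \geq M$, any reduced word $\alpha_1\alpha_2\cdots \alpha_{2k}$ in $S = \{\Phi^{\pm n}, \Psi^{\pm m}\}$, and any nontrivial $w \in F_N$, the sequence $w_0 := w$, $w_i := \alpha_i(w_{i-1})$ satisfies
\[
\max\bigl\{|w_0|_A,\, |w_{2k}|_A\bigr\} \;\geq\; 2^k\, |w_k|_A.
\]
The key tool is a \emph{persistence of doubling} lemma: if $|w_{i+1}|_A \geq 2|w_i|_A$, then $|w_{i+2}|_A \geq 2|w_{i+1}|_A$. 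Indeed, $\alpha_{i+1}^{-1}(w_{i+1}) = w_i$ has length $\leq |w_{i+1}|_A / 2$, so $\alpha_{i+1}^{-1}$ must be the (necessarily unique) ``bad'' generator in $S$ for $w_{i+1}$ in the sense of Corollary~\ref{cor:3-4}; reducedness then forces $\alpha_{i+2} \neq \alpha_{i+1}^{-1}$, so $\alpha_{i+2}$ is good and doubles $|w_{i+1}|_A$. Now at the midpoint $w_k$, the two generators $\alpha_{k+1}$ and $\alpha_k^{-1}$ are distinct by reducedness; at most one of them is bad for $w_k$, so at least one of them is good. This launches an exponential doubling cascade, either forward from $w_k$ to $w_{2k}$ or backward from $w_k$ to $w_0$, yielding the inequality above upon iteration.

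With annuli flaring established (in fact with exponential rate, so any $\lambda$ can be achieved by choosing $k$ large enough), the Bestvina-Feighn Combination Theorem delivers word-hyperbolicity of $G_{n,m}$. Hyperbolicity of every nontrivial $\theta \in \langle \phi^n, \psi^m\rangle$ then follows: any lift $\Theta \in \langle \Phi^n, \Psi^m\rangle$ has mapping torus $F_N \rtimes_\Theta \Z$ embedded as a subgroup of $G_{n,m}$, hence word-hyperbolic, and by Brinkmann's theorem~\cite{Br} this means $\theta$ is atoroidal, i.e., hyperbolic --- this is the substance of Remark~\ref{purely-hyperbolic}.

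The main technical obstacle is the translation between the purely combinatorial doubling-cascade argument and the precise form of the annuli (or ``hallways'') flaring condition in~\cite{BF92}: one must verify that sequences $(w_i)$ arising from reduced $F_2$-words in $S$ correspond exactly to hallways in the Bass-Serre tree of $G_{n,m}$, and that the exponential pointwise doubling obtained above upgrades to the quantitative flaring condition for thick hallways used in the Combination Theorem. Once this translation is in place, the persistence-of-doubling argument is short and elementary.
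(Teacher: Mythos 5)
Your proposal is essentially the paper's own approach: the paper simply states that Theorem~\ref{thm:hyperb} follows from Corollary~\ref{cor:3-4} and the Bestvina--Feighn Combination Theorem ``via a standard argument (c.f.\ the proof of Theorem~5.2 in \cite{BFH97})'', and your doubling-cascade verification of annuli flaring is precisely that standard argument, spelled out. Two small notes: the relevant quantity throughout is the translation length $||\cdot||_A$ (cyclic length) as in Corollary~\ref{cor:3-4}, not word length $|\cdot|_A$, which is in any case what the annuli flaring condition measures; and your persistence-of-doubling lemma is correct as stated, including the observation that the hypothesis $||w_{i+1}||_A \ge 2||w_i||_A$ already forces $\alpha_{i+1}^{-1}$ to be the unique ``bad'' generator for $w_{i+1}$ (otherwise $||w_i||_A \ge 2||w_{i+1}||_A \ge 4||w_i||_A$, a contradiction), so the reducedness constraint does deliver the next doubling step.
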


The ``3 out of 4" condition in Corollary~\ref{cor:3-4} was first introduced by Lee Mosher for surface homeomorphisms in~\cite{Mosher} where he used it to construct an example of a (closed surface)-by-(free of rank two) word-hyperbolic group. Similarly,  the  ``3 out of 4"  condition was used by Bestvina, Feign and Handel~\cite{BFH97} to construct a free-by-free word-hyperbolic group.  Our proof of the  ``3 out of 4" condition in Corollary~\ref{cor:3-4}  is different from both the approaches of Mosher and of Bestvina-Feighn-Handel:  our method is based on exploiting North-South dynamics of hyperbolic iwips acting on the space of projectivized currents rather than on the space of laminations.

\begin{rem}\label{purely-hyperbolic}
In Theorem~\ref{thm:hyperb} it is also easy to conclude that 
for every nontrivial element $\theta\in \langle
\Phi,\Psi\rangle\subseteq \Aut(F_N)$ the automorphism $\theta$ is
hyperbolic. This can be seen directly from the Annuli Flare Condition~\cite{BF92}
for the group $G_{n,m}$ above.  Alternatively,  suppose $\theta$ is not
hyperbolic. Then $\theta$ is not atoroidal, that is to say $\theta$
has a periodic conjugacy class. This yields a $\mathbb Z\times\mathbb
Z$-subgroup in $G_{ni,m}$ which contradicts the fact that
$G_{n,m}$ is word-hyperbolic.
\end{rem}

\begin{rem}
The same proof as that of Theorem~\ref{thm:hyperb} shows that the conclusion of this theorem holds if instead of two elements of $\Out(F_N)$ we use $k\ge 2$ hyperbolic iwip elements $\phi_1,\dots, \phi_k$ with the property that for every $1\le i<j\le k$ the subgroup $\langle \phi_i,\phi_j\rangle\subseteq \Out(F_N)$ is not virtually cyclic.
\end{rem}

\section{Ping-pong for Schottky type groups and its consequences}

\begin{defn}\label{ping-pong0}
Let $G$ be a group that acts on a non-empty set  $X$ (either on the
left or on the right).  Suppose that the group  $G$  is generated by
two specified elements $g$  and  $h$, and  that $X$ contains pairwise
disjoint {\em geographical} nonempty subsets  $N$ (= ``North''), $S$ (= ``South''), $E$ (= ``East'') an $W$ (= ``West''),
such that $g$ maps $X - S$ into $N$,  $g^{-1}$  maps  $X - N$  into
$S$, and similarly  $h$  with  $E$  and  $W$. We call this action of
$G$ on $X$ a {\em 2-generator ping-pong action} with respect to $g$ and $h$.
\end{defn}

For the rest of this section, unless specified otherwise we assume
that we are given a 2-generator ping-pong action of $G$ on $X$ with
respect to $g$ and $h$.

It follows from Felix Klein's classical argument that in the above
situation $G$ is free with a free basis $\{g, h\}$. Notice that we
purposefully did not specify whether  $G$  acts from the left or from
the right on  $X$.  For any reduced and cyclically reduced word  $w =
x_1 \ldots x_q$  in  $\{g,h\}^{\pm 1}$ we define the {\em final acting letter} to be the $x_i$ that acts last on $X$. Thus, if we have a left action, then the final acting letter of  $w$ is the first letter $x_1$, and in case of a right action it is the last letter $x_q$.

\smallskip

We define the  {\em forward limit region} of  $w$ (reduced and cyclically reduced) as the nested intersections of the images of  $Y$  under  $w^n$,  for  positive  $n$, where  $Y = N$  if  $w$  has the final acting letter  $g$, $Y = S$  if  $w$  has the final acting letter  $g^{-1}$, and similarly  $Y = E$  or  $Y = W$  if  $w$  has the final acting letter  $h$  or  $h^{-1}$.

\begin{rem}
\label{fixed-points}
It follows directly from the above set-up that for a cyclically
reduced $w$, any fixed point of $w$ in $X$ must be contained either in the forward limit region of $w$ or in that of $w^{-1}$ (one could call the latter the {\em backward limit region} of $w$).
\end{rem}

\begin{defn}
\label{ping-pong}
A 2-generator ping-pong action of $G$ on $X$ as above is called {\em open} if in addition
$X$ is a topological space, $G$ acts by homeomorphisms on $X$, and the geographical subsets $N, S, E$ and $W$ are open.
\end{defn}

Notice also that the restriction of any 2-generator ping-pong action
to a $G$-invariant subset $X' \subseteq X$ is also 2-generator ping-pong: One simply redefines the set North as $X' \cap N$, South as $X' \cap S$, etc. Of course, if the action on $X$ is open, then so is the action on $X'$.

\smallskip

Suppose now that $\phi,\psi\in \Out(\FN)$ are hyperbolic iwips such that
$\langle \phi, \psi\rangle$ is not virtually cyclic. We already know by the
results of Section~\ref{NS} that there is $M\ge 1$ such that for every
$n, m\ge M$ the actions of $G=\langle \phi^n,\psi^m\rangle$ on both,
$\CVNbar$ (from the right) and on $\Pr\Curr(\FN)$ (from the left) are
open 2-generator ping-pong actions with respect to $\phi^n,\psi^m$. Note
that in this case for the left action of $G$ on $\Pr\Curr(\FN)$, the
``north'' set $N$ contains $[\mu_+(\phi)]$, the ``south'' set $S$
contains $[\mu_-(\phi)]$, the ``east'' set $E$ contains $[\mu_+(\psi)]$ and
the ``west'' set $W$ contains $[\mu_-(\psi)]$. Similarly, for the right
action of $G$ on $\CVNbar$ the geographical sets $N$, $S$, $E$, $W$
contain $[T_+(\phi)]$, $[T_-(\phi)]$, $[T_+(\psi)]$, and $[T_-(\psi)]$
respectively.

The main goal of this section is to prove the following:

\begin{prop}
\label{crucial-lemma}
Let  $\phi, \psi \in\Out(\FN)$ be hyperbolic iwips, and assume that the actions of $G =
\langle \phi, \psi\rangle$ on both,  $\CVNbar$  (from the right)  and on
$\Pr\Curr(\FN)$  (from the left) are open 2-generator ping-pong
actions with respect to $\phi,\psi$.

Then there exist constants  $m_0, n_0 \geq 1$ with the following
property.  Suppose $m \geq m_0$, $n \geq n_0$ and $w$ is a cyclically
reduced word in  $\phi^{\pm 1}, \psi^{\pm 1}$  which starts in  $\phi^m$  and
ends in  $\phi^n$, and suppose $[\mu]\in \PCurr$ is such that
$[\mu]$ is contained in the forward limit region of $w$ and such
for some $\lambda \ne 1$ we have $w \mu = \lambda \mu$.

Then $\lambda>1$.
\end{prop}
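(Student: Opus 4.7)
The strategy is to bound $\langle T_A,w\mu\rangle$ from below by iteratively applying Corollary~\ref{cor:2} through the maximal-power decomposition of $w$, and to compare with the identity $\langle T_A,w\mu\rangle = \lambda\,\langle T_A,\mu\rangle$, which follows from $w\mu=\lambda\mu$ and property~(3) of the intersection form in Proposition~\ref{int}.

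First, fix a free basis $A$ of $\FN$ and let $T_A \in \cvn$ be the associated Cayley tree; since $T_A$ is free and simplicial one has $\langle T_A,\mu\rangle>0$ for $\mu\neq 0$. Select small pairwise disjoint open geographical neighborhoods $N,S,E,W\subseteq\PCurr$ of $[\mu_+(\phi)]$, $[\mu_-(\phi)]$, $[\mu_+(\psi)]$, $[\mu_-(\psi)]$ that witness the ping-pong action. Apply Corollary~\ref{cor:2} to each of the four hyperbolic iwips $\phi^{\pm 1},\psi^{\pm 1}$, taking $V$ to be the respective repelling neighborhood $S,N,W,E$, to produce doubling thresholds $M_1,\dots,M_4$; choose $m_0,n_0$ at least as large as $\max_i M_i$ and large enough that the ping-pong hypotheses hold for these four neighborhoods whenever $m\ge m_0$, $n\ge n_0$. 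Since the final acting letter of $w$ for the left action on $\PCurr$ is $\alpha_1=\phi^m$, the set $Y$ used to define the forward limit region of $w$ is $N$, and a direct application of ping-pong letter by letter gives $w(N)\subseteq N$; iterating, $[\mu]\in\bigcap_{k\ge 1}w^k(N)\subseteq N$.

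Decompose $w = \alpha_1\alpha_2\cdots\alpha_q$ into its maximal $\phi$- or $\psi$-powers. Since $w$ is cyclically reduced, consecutive factors $\alpha_i$ and $\alpha_{i+1}$ involve distinct base generators. Set $\nu_q:=\mu$ and $\nu_{i-1}:=\alpha_i\nu_i$ for $i=q,q-1,\dots,1$, so that $\nu_0=w\mu$. We claim by descending induction on $i$ that $[\nu_i]$ lies in one of the four regions $N,S,E,W$ and in particular outside the repelling neighborhood of the letter $\alpha_i$; consequently Corollary~\ref{cor:2} yields
\[
\langle T_A,\nu_{i-1}\rangle \;\ge\; 2\,\langle T_A,\nu_i\rangle.
\]
The base case $i=q$ follows from $[\mu]\in N$ being disjoint from $S$, the repelling neighborhood of $\alpha_q=\phi^n$. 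For the inductive step $i<q$, the inductive assumption together with ping-pong places $[\nu_i]=\alpha_{i+1}[\nu_{i+1}]$ in the attracting neighborhood of $\alpha_{i+1}$; since $\alpha_i$ and $\alpha_{i+1}$ involve different base generators, that attracting neighborhood is disjoint from the repelling neighborhood of $\alpha_i$.

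Chaining the $q$ doublings gives $\langle T_A,w\mu\rangle \ge 2^q\,\langle T_A,\mu\rangle$. Combining with $\langle T_A,w\mu\rangle=\lambda\,\langle T_A,\mu\rangle$ and $\langle T_A,\mu\rangle>0$ yields $\lambda\ge 2^q\ge 2>1$, as required. The principal delicacy of this plan is the simultaneous coordination of the ping-pong dynamics and the doubling estimates of Corollary~\ref{cor:2} on a single system of small neighborhoods; once this is in place, the alternation of $\phi$- and $\psi$-powers in a reduced word is exactly what guarantees that consecutive attracting and repelling neighborhoods are disjoint, which is what drives the inductive chain of doublings uninterrupted.
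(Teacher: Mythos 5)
There is a genuine gap in your proposal. Your inductive chain applies Corollary~\ref{cor:2} once for each block $\alpha_i$ in the maximal-power decomposition $w=\alpha_1\cdots\alpha_q$. But Corollary~\ref{cor:2} only produces a doubling threshold for powers $\phi^n$ or $\psi^n$ with $n\geq M_1$, and the hypotheses of the proposition constrain only the \emph{first} and \emph{last} blocks to be large powers ($\phi^m$ and $\phi^n$). The interior blocks $\alpha_2,\dots,\alpha_{q-1}$ are unconstrained and can have exponent $1$ --- for example $w=\phi^m\psi\phi^{-1}\psi\phi^n$. For such a block Corollary~\ref{cor:2} gives you nothing, and indeed there is no reason a single application of $\psi^{\pm 1}$ should not strictly \emph{decrease} $\langle T_A,\nu\rangle$. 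Since the interior word $w'$ can be arbitrarily long, the cumulative loss can be arbitrarily large, and it cannot be recovered by raising $m_0,n_0$: those only control the two end-blocks. So the claimed inequality $\langle T_A,w\mu\rangle\geq 2^q\langle T_A,\mu\rangle$ does not follow.

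The paper sidesteps exactly this difficulty by working on the tree side. It constructs (Lemma~\ref{several-old}) a set $V\subseteq cv^1_N$ and a basepoint $T_0\in V$ such that (i) $T_0w'\in V$ for \emph{every} reduced word $w'$ not ending in $\phi^{-1}$ --- this is pure ping-pong on $\CVNbar$ and uses no growth estimate for the middle of the word --- and (ii) $\inf_{T\in V}\langle T,\mu_+\rangle=c>0$, i.e.\ a uniform lower bound on the intersection with $\mu_+$ valid over the whole $G$-orbit piece. The expansion then comes entirely from the terminal $\phi^n$: $\langle T_0w'\phi^n,\mu_+\rangle\geq\lambda_+^nc$, which is pushed to nearby currents by Lemma~\ref{(8')halfbound}. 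The initial $\phi^m$ enters only through Lemma~\ref{forward-limit-region}, forcing $[\mu]$ into the small neighborhood $\Pr U$ where that estimate applies. In short: the paper replaces your block-by-block multiplicative gain (which is unavailable for small blocks) by a single uniform additive floor $c$ over the entire middle of the word, together with a single large expansion at the end. Your reduction to the scalar identity $\langle T_A,w\mu\rangle=\lambda\langle T_A,\mu\rangle$ and the use of a free simplicial tree to ensure $\langle T,\mu\rangle>0$ are both sound and match the paper's final step, but the core estimate needs to be replaced.
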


For the remainder of this section we suppose that the assumptions of
Proposition \ref{crucial-lemma} are satisfied.
In order to prove this proposition we need first some preliminary considerations.

\smallskip

Recall that $\CVN=\mathbb P \cvn$ is the projectivization of $\cvn$.
We consider the (right) action of $G$ on  $\CVNbar$. Denote by $cv^1_N
\subseteq\cvn$ the lift of  $\CVN$ to $\cvn$  where every tree has covolume 1. Note that $cv^1_N$ is invariant under the action of $\Out(\FN)$ on $\cvn$, and that the group $G = \langle \phi, \psi \rangle$ acts on $cv^1_N$ again as open 2-generator ping-pong group, since by the hypothesis of Proposition \ref{crucial-lemma} it does so on $\CVNbar$ and thus on $\CVN$.

\smallskip

\begin{conv}\label{conv}
Let $\phi\in \Out(F_N)$ be a hyperbolic iwip.
Let  $[\mu_+]=[\mu_+(\phi)]$  be an expanding fixed projectivized current of
$\phi$, i.e.  $\phi \mu_+ = \lambda_+ \mu_+$, with  $\lambda_+ > 1$.  Let
$[T_-]$ be a contracting fixed projectivized $\R$-tree of $\phi$, i.e.
$T_- \phi = \lambda_-\inv T_-$, with  $\lambda_- > 1$.  We recall  that,
by Proposition~\ref{prop:rm} and Proposition~\ref{prop:LL}, both
$\mu_+$ and $T_-$ are uniquely determined by $\phi$ up to scalar
multiplication, and that $\langle T_-, \mu_+ \rangle = 0$. Moreover, by
Proposition~\ref{prop:zero}, up to scalar multiples, $T_-$ is the unique
tree  $T \in \cvnbar$ satisfying $\langle T, \mu_+ \rangle = 0$.

Similarly, let $[\mu_-]=[\mu_-(\phi)]$ and $[T_+]=[T_+(\phi)]$ be the
contracting fixed projectivized current and expanding fixed
projectivized tree for $\phi$. Note that $T_+\phi=\lambda_+T_+$ and $\phi^{-1}\mu_-=\lambda_-\mu_-$.
\end{conv}

\smallskip

For any subset $V \subseteq\cvnbar$ we denote by $\Pr V \subseteq\CVNbar$ its canonical image under projectivization. Note that the closure $\overline V$ of $V$ in $\cvnbar$ projects to a subset $\Pr \overline V \subseteq\CVNbar$ which is contained in the closure $\overline{\Pr V}$ of $\Pr V$ in $\CVNbar$, but that in general the two are not equal:
\[\Pr \overline V \subseteq \overline{\Pr V}.\]

\begin{lem}\label{(8')halfbound}
Let  $V$  be a subset of $cv^1_N$  with the property that
$[T_-]\not\in\overline{\Pr V}$. Suppose also that
for some constant  $c > 0$ we have
\[\langle T, \mu_+ \rangle\,\, \ge c\]
for all  $T$  in  $V$.  Then for some sufficiently small
neighborhood  $U \subseteq\Curr(\FN)$  of  $\mu_+$ one has
\[\langle T, \mu \rangle\,\,\,  \geq \, \,  \frac{c}{2}\]
for any  $T$  in  $V$ and for any  $\mu \in U$.
\end{lem}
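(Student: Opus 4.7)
The plan is to argue by contradiction. Suppose no such neighborhood $U$ exists; then for any shrinking basis of open neighborhoods of $\mu_+$ in $\Curr(\FN)$ I can extract sequences $T_n \in V$ and $\mu_n \in \Curr(\FN)$ with $\mu_n \to \mu_+$ and $\langle T_n, \mu_n\rangle < c/2$, while by hypothesis $\langle T_n, \mu_+\rangle \geq c$ for every $n$.

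First, I would pass to a convergent subsequence at the projective level. The set $V$ itself lies in the non-compact space $cv^1_N$, so there is no immediate compactness argument available. However, the projectivization $\Pr V$ sits in the compact space $\CVNbar$, and its closure $\overline{\Pr V}$ is a compact subset not containing $[T_-]$. After passing to a subsequence, $[T_n] \to [T_\infty]$ in $\CVNbar$ with $[T_\infty] \in \overline{\Pr V}$, and hence $[T_\infty]\ne [T_-]$. Since by definition $\CVNbar = \cvnbar/{\sim}$ is a quotient under scalar multiplication, this projective convergence can be lifted: there exist scalars $\lambda_n > 0$ and a representative $T_\infty\in\cvnbar$ of $[T_\infty]$ such that $\lambda_n T_n \to T_\infty$ in the length function topology on $\cvnbar$.

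Next I would exploit the joint continuity and scaling properties of the intersection form provided by Proposition~\ref{int}. Combining $\lambda_n T_n \to T_\infty$ in $\cvnbar$, $\mu_n\to\mu_+$ in $\Curr(\FN)$, and parts (1),(2) of Proposition~\ref{int}, I obtain simultaneously
\[
\lambda_n \langle T_n, \mu_+\rangle \;=\; \langle \lambda_n T_n, \mu_+\rangle \;\longrightarrow\; \langle T_\infty, \mu_+\rangle
\]
and
\[
\lambda_n \langle T_n, \mu_n\rangle \;=\; \langle \lambda_n T_n, \mu_n\rangle \;\longrightarrow\; \langle T_\infty, \mu_+\rangle.
\]
By part (1) of Proposition~\ref{prop:zero}, the common limit $\langle T_\infty, \mu_+\rangle$ is strictly positive because $[T_\infty]\ne [T_-]$. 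Since $\langle T_n,\mu_+\rangle \ge c >0$, the two displayed limits allow me to divide and conclude that $\langle T_n, \mu_n\rangle / \langle T_n, \mu_+\rangle \to 1$; but the hypothesis $\langle T_n, \mu_n\rangle < c/2 \le \frac{1}{2}\langle T_n,\mu_+\rangle$ forces this ratio to stay below $1/2$, the desired contradiction.

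The only subtlety is the lifting of projective convergence in $\CVNbar$ to actual convergence in $\cvnbar$; this is a formal consequence of $\CVNbar$ being the quotient of $\cvnbar$ by the $\R_{>0}$-action. The role of the hypothesis $[T_-]\notin\overline{\Pr V}$ is precisely to guarantee, via Proposition~\ref{prop:zero}, that $\langle T_\infty,\mu_+\rangle$ is nonzero so that the scale factors $\lambda_n$ can be implicitly eliminated by taking ratios, bypassing any need to control the normalization of $T_n$ in absolute terms.
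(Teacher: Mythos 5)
Your proof is correct and follows essentially the same path as the paper's: compactness of $\CVNbar$ to extract a projective limit $[T_\infty]\in\overline{\Pr V}$, lifting to actual convergence $\lambda_n T_n\to T_\infty$ in $\cvnbar$, and joint continuity of the intersection form. The only difference is the final bookkeeping: the paper combines the two convergences into the chain $\langle T_\infty,\mu_+\rangle\le\tfrac12\langle T_\infty,\mu_+\rangle$, concluding $\langle T_\infty,\mu_+\rangle=0$ and hence $[T_\infty]=[T_-]$, contradicting $[T_-]\notin\overline{\Pr V}$; you instead use $[T_\infty]\ne[T_-]$ up front to get $\langle T_\infty,\mu_+\rangle>0$ and then contradict the ratio bound $\langle T_n,\mu_n\rangle/\langle T_n,\mu_+\rangle\le\tfrac12$. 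These are logically equivalent rearrangements of the same argument, so no substantive difference.
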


\begin{proof}
Suppose that the statement of the lemma is false.
Then there exist a sequence of currents  $\mu_i \in \Curr(\FN)$  converging to  $\mu_+$, and
a sequence of trees  $T_i \in V$, which all satisfy
\[\langle T_i, \mu_i \rangle \,\, < \,\, \frac{c}{2} \,\, \leq \,\, \frac{\langle T_i, \mu_+ \rangle}{2}.\]
By compactness of  $\CVNbar$  we can extract a subsequence
of the  $T_i$, which we still denote $T_i$,  which converges
projectively to some $\R$-tree  $T_\infty$  in $\cvnbar$. Let
$\lambda_i > 0$ be such that $\lim_{i\to\infty}\lambda_i T_i=T_\infty$
in $\cvnbar$.

By continuity and linearity of the intersection form we have:

\begin{gather*}\langle T_\infty, \mu_+ \rangle = \langle \lim \lambda_i T_i, \lim \mu_i \rangle = \lim \lambda_i \langle T_i,  \mu_i \rangle\\
\leq
\lim  \frac{\lambda_i \langle T_i,  \mu_+ \rangle}{2} =
\frac{\langle \lim \lambda_i T_i,  \mu_+ \rangle}{2} =
\frac{\langle T_\infty,  \mu_+ \rangle}{2}
\end{gather*}

Hence
$\langle T_\infty, \mu_+ \rangle = 0$ and therefore, by
Proposition~\ref{prop:zero}, $[T_\infty]=[T_-]$.
However, $[T_\infty]$  is contained in the closure of  $\Pr V$ in
$\CVNbar$, which contradicts our assumption that $[T_-]\not\in\overline{\Pr V}$.
\end{proof}

\begin{lem}
\label{several-old}
There exists a non-empty subset $V \subseteq cv^1_N$ with the following properties:

\begin{enumerate}
\item
$[T_-] \notin \overline{\Pr V}$

\item
$c := \inf \{ \langle T, \mu_+ \rangle \mid T \in V \}  > 0$

\item
$V \phi \subseteq V$, and
$\langle T \phi, \mu_+\rangle \,\, = \,\, \lambda_+  \langle T, \mu_+\rangle$ for any  $T \in V$.

\item
There exists a tree $T_0 \in V$ such that
for every reduced word  $w$  in  $\phi^{\pm 1}$   and  $\psi^{\pm 1}$  that does not end in  $\phi^{-1}$, the tree $T_0 w$  lies in  $V$.
\end{enumerate}
\end{lem}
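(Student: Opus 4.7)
The plan is to take $V$ to be the orbit of a single carefully chosen tree $T_0 \in cv_N^1$ under right-multiplication by reduced words in $\phi^{\pm 1}, \psi^{\pm 1}$ that do not end in $\phi^{-1}$. Denote by $N, S, E, W \subseteq \CVNbar$ the pairwise disjoint open ping-pong regions containing $[T_+(\phi)], [T_-(\phi)], [T_+(\psi)], [T_-(\psi)]$ respectively. First I would observe, by a connectedness argument, that $\CVN$ cannot be contained in $N \cup S \cup E \cup W$: otherwise, being connected, $\CVN$ would lie in one of the four regions, say $N$; then density of $\CVN$ in $\CVNbar$ would force $\overline N = \CVNbar$, contradicting the existence of the non-empty disjoint open sets $S, E, W$. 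Pick therefore $T_0 \in cv_N^1$ with $[T_0] \in \CVN$ outside all four regions, and set
\[
V := \{T_0 w : w \text{ is a (possibly empty) reduced word in } \phi^{\pm 1}, \psi^{\pm 1}, \text{ not ending in } \phi^{-1}\}.
\]
Let $\epsilon > 0$ denote the minimum edge length in the finite metric graph $T_0 / F_N$.

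Properties (3) and (4) follow immediately. If $w$ is reduced and does not end in $\phi^{-1}$, then $w\phi$ is reduced and ends in $\phi$, so $T_0 w \cdot \phi = T_0(w\phi) \in V$; and $\langle T\phi, \mu_+\rangle = \langle T, \phi\mu_+\rangle = \lambda_+ \langle T, \mu_+\rangle$ by property (3) of the intersection form combined with $\phi\mu_+ = \lambda_+\mu_+$. For property (1), I would apply ping-pong inductively: the choice of $T_0$ outside all four regions handles the first letter of $w$, and thereafter the pairwise disjointness of the regions together with the reducedness of $w = g_1 \cdots g_q$ ensures that $[T_0 g_1 \cdots g_i]$, which lies in the region for $g_i$, is outside the source region for $g_{i+1}$. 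Hence $[T_0 w]$ lies in the region associated to the final letter of $w$, and in particular $\Pr V \subseteq \{[T_0]\} \cup N \cup E \cup W$. Since $N, E, W$ are disjoint from the open set $S$, their closures are contained in the closed set $\CVNbar - S$, so $\overline{\Pr V} \subseteq \{[T_0]\} \cup (\CVNbar - S)$, which does not contain $[T_-] \in S$.

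The subtle step is property (2). Suppose for contradiction there is a sequence $T_n = T_0 w_n \in V$ with $\langle T_n, \mu_+\rangle \to 0$. By compactness of $\CVNbar$, pass to a subsequence with $[T_n] \to [T_\infty]$; by (1) we have $[T_\infty] \ne [T_-]$, so $\langle T_\infty, \mu_+\rangle > 0$ for any representative $T_\infty \in \cvnbar$ by Proposition~\ref{prop:zero}. Fix such a representative $T_\infty$ and pick $c_n > 0$ with $c_n T_n \to T_\infty$ in $\cvnbar$. The key observation is that $T_n = T_0 w_n$ has the same underlying metric tree as $T_0$ — only the $F_N$-action is modified — so for every $g \in F_N \smsm \{1\}$,
\[
\|g\|_{T_n} \,=\, \|\Phi_{w_n}(g)\|_{T_0} \,\ge\, \epsilon,
\]
where $\Phi_{w_n}$ is any lift of $w_n$ to $\Aut(F_N)$; the inequality holds because $\Phi_{w_n}(g)$ is nontrivial and any nontrivial cyclic loop in $T_0/F_N$ traverses at least one edge of length $\ge \epsilon$. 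Picking $g_0 \ne 1$ with $\|g_0\|_{T_\infty} > 0$ (which exists because $T_\infty$ is a minimal non-trivial $F_N$-action), the convergence $c_n \|g_0\|_{T_n} \to \|g_0\|_{T_\infty}$ combined with $\|g_0\|_{T_n} \ge \epsilon$ yields a uniform bound $c_n \le C$. Continuity of the intersection form gives $c_n \langle T_n, \mu_+\rangle \to \langle T_\infty, \mu_+\rangle > 0$, and hence $\langle T_n, \mu_+\rangle \ge \langle T_\infty, \mu_+\rangle / (2C)$ for large $n$, contradicting $\langle T_n, \mu_+\rangle \to 0$.

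The main obstacle is exactly this last step: the intersection form is an unprojectivized quantity, whereas ping-pong only yields projective information (locating $[T_\infty]$ away from $[T_-]$). Bridging this gap requires a uniform positive lower bound on translation lengths across $V$, in order to control the rescalings $c_n$ in the passage from projective to unprojectivized convergence; it is precisely the common underlying metric graph shared by all trees in $V$ — a feature of orbits under the right $\Out(F_N)$-action — that supplies such a lower bound.
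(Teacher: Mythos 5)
Your proposal is correct, and the argument for property~(2) is genuinely different from the paper's. The paper takes
$V = \{T \in T_0\,\Out(F_N) : [T] \notin S\}$ and, assuming a sequence $T_0\alpha_n$ with $\langle T_0\alpha_n,\mu_+\rangle \to 0$, invokes the proper discontinuity of the $\Out(F_N)$-action on $cv_N^1$ to place $[T_\infty]$ on the boundary $\partial\CVN$, and then cites a result from \cite{KL3} to conclude that the scaling factors $c_n$ tend to $0$; combining this with a uniform bound $\langle T_0\alpha_n,\mu_+\rangle \le 1$ yields $\langle T_\infty,\mu_+\rangle = 0$, contradicting $[T_\infty]\ne[T_-]$. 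You instead observe that every $T_0 w \in V$ has the same underlying metric space as $T_0$, so that $\|g\|_{T_0 w} = \|\Phi_w(g)\|_{T_0}$ is bounded below by the minimum edge length $\epsilon$ of $T_0/F_N$ for every $g\ne 1$; this gives a uniform \emph{upper} bound $c_n \le C$ on the rescaling factors via a single nontrivial $g_0$ with $\|g_0\|_{T_\infty}>0$, and then continuity of the intersection form, together with the hypothesis $\langle T_n,\mu_+\rangle\to 0$, directly forces $\langle T_\infty,\mu_+\rangle = 0$, the same contradiction. Your route is more self-contained (it replaces proper discontinuity and the cited \cite{KL3} fact by an elementary translation-length estimate), and it makes transparent exactly where the covolume-one normalization enters; the same estimate would also handle the paper's slightly larger choice of $V$, since that $V$ is likewise a subset of the $\Out(F_N)$-orbit of $T_0$. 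Two smaller differences are benign: you choose $V$ as the precise orbit appearing in property~(4) rather than the paper's $\{T\in T_0\Out(F_N):[T]\notin S\}$, and you supply a short connectedness argument for the existence of $T_0$ with $[T_0]$ outside $N\cup S\cup E\cup W$, which the paper asserts without comment.
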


\begin{proof}

Let $N,S,E,W\subseteq \CVNbar$ be the geographical subsets for the
ping-pong action of $G=\langle \phi,\psi\rangle$ on $\CVNbar$. Recall that
$N$ and $S$ are open neighborhoods of $[T_+]=[T_+(\phi)]$ and
$[T_-]=[T_-(\phi)]$ accordingly. Choose an arbitrary $T_0\in cv^1_N$ such
that $[T_0]\not\in N \cup S \cup E \cup W$.

Put
\[
V=\{T\in T_0 \Out(F_N): [T]\not\in S\}.
\]

We claim that these choices of $V$ and $T_0$ satisfy all the requirements
of the lemma. Indeed, conditions (1), (3) and (4) follow immediately
from the definitions of $V$ and $T_0$ and from the ping-pong
properties of the action of $G$ on $\CVNbar$. Suppose that condition
(2) fails. Then there exists a sequence $\alpha_n\in \Out(F_N)$ such
that $\lim_{n\to\infty} \langle T_0\alpha_n,\mu_+\rangle=0$. After
passing to a further subsequence, we may assume that
$\lim_{n\to\infty} [T_0\alpha_n]=[T_\infty]$ and that
$\lim_{n\to\infty} c_nT_0\alpha_n=T_\infty$ for some $T_\infty\in
\cvnbar$ and some $c_n>0$. Note that $[T_\infty]$ belongs to the
closure of $\mathbb PV$ and hence $[T_\infty]\ne [T_-]$. Note that
because $T_0\in cv^1_N$, we have $\langle
T_0\alpha_n,\mu_+\rangle>0$. After passing to a further subsequence we
may also assume that all $\alpha_n$ are distinct and that
\[
0<\langle T_0\alpha_n,\mu_+\rangle\le 1
\]
for all $n\ge 1$. Since $\alpha_n$ are distinct and the action of
$\Out(F_N)$ on $cv^1_N$ is properly discontinuous, it follows that
$[T_\infty]\in \partial \CVN=\CVNbar-\CVN$. This in turn implies that
$\lim_{n\to\infty} c_n=0$ (see \cite{KL3} for details).

By the linearity of the intersection form we have:
\[
\langle c_nT_0\alpha_n,\mu_+\rangle=c_n\langle
T_0\alpha_n,\mu_+\rangle\le c_n\to_{n\to\infty} 0
\]
Hence, by continuity, $\langle T_\infty, \mu_+\rangle=0$ which, by
Proposition~\ref{prop:zero}, implies that $[T_\infty]=[T_-]$. This
contradicts our earlier conclusion that $[T_\infty]\ne [T_-]$.
\end{proof}

\begin{lem}
\label{(8)old}
There exists a tree $T_0 \in cv^1_N$, a constant $n_0 \geq 0$ and a neighborhood $U \subseteq\Curr(\FN)$ of $\mu_+$, such that for any reduced word  $w$  in  $\phi^{\pm 1}$   and  $\psi^{\pm 1}$  that does not end in  $\phi^{-1}$, for any  $n \geq n_0$, and for any  $\mu \in U$, one has:
$$
\langle T_0, w \phi^n \mu \rangle  \,\, > \,\,   \langle T_0, \mu \rangle.
$$
\end{lem}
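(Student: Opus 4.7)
My plan is to combine Lemma~\ref{several-old}, which supplies a subset $V \subseteq cv^1_N$ and a base tree $T_0 \in V$, with a proportional refinement of Lemma~\ref{(8')halfbound}. By parts (3) and (4) of Lemma~\ref{several-old}, every tree of the form $T_0 w \phi^n$, where $w$ is a reduced word in $\phi^{\pm 1}, \psi^{\pm 1}$ not ending in $\phi^{-1}$ and $n \geq 0$, lies in $V$, and one has $\langle T_0 w\phi^n, \mu_+\rangle = \lambda_+^n \langle T_0 w, \mu_+\rangle \geq \lambda_+^n c$, where $c = \inf\{\langle T,\mu_+\rangle : T \in V\} > 0$ comes from Lemma~\ref{several-old}(2). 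By repeated application of the equivariance of the intersection form (Proposition~\ref{int}(3)) we have $\langle T_0, w\phi^n\mu\rangle = \langle T_0 w\phi^n, \mu\rangle$, so this is the quantity to bound below.

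The main technical step is the following proportional strengthening of Lemma~\ref{(8')halfbound}: there exists a neighborhood $U_1 \subseteq \Curr(\FN)$ of $\mu_+$ such that $\langle T, \mu\rangle \geq \tfrac{1}{2}\langle T, \mu_+\rangle$ for every $T \in V$ and every $\mu \in U_1$. The proof is essentially that of Lemma~\ref{(8')halfbound}: if it failed, one would extract sequences $T_i \in V$ and $\mu_i \to \mu_+$ with $\langle T_i, \mu_i\rangle < \tfrac{1}{2}\langle T_i, \mu_+\rangle$, rescale $T_i$ by $\lambda_i > 0$ so that $\lambda_i T_i \to T_\infty$ in $\cvnbar$ (via compactness of $\CVNbar$), and observe that continuity of the intersection form forces the ratio $\langle \lambda_i T_i, \mu_i\rangle / \langle \lambda_i T_i, \mu_+\rangle$ to tend to $1$, contradicting the assumed ratio bound unless $\langle T_\infty, \mu_+\rangle = 0$; but then $[T_\infty] = [T_-]$ by Proposition~\ref{prop:zero}, contradicting $[T_-] \notin \overline{\Pr V}$ from Lemma~\ref{several-old}(1).

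Combining these two inputs yields $\langle T_0 w\phi^n, \mu\rangle \geq \tfrac{1}{2}\lambda_+^n c$ for every admissible $w$, every $n \geq 0$, and every $\mu \in U_1$. Then I would shrink $U_1$ to a smaller neighborhood $U$ of $\mu_+$ on which $\langle T_0, \mu\rangle \leq 2\langle T_0, \mu_+\rangle$ (possible by continuity of $\mu \mapsto \langle T_0, \mu\rangle$ at $\mu_+$), and choose $n_0$ large enough that $\lambda_+^{n_0} c > 4\langle T_0, \mu_+\rangle$. For $n \geq n_0$ and $\mu \in U$ one then gets $\langle T_0, w\phi^n\mu\rangle \geq \tfrac{1}{2}\lambda_+^n c > 2\langle T_0, \mu_+\rangle \geq \langle T_0, \mu\rangle$, which is the desired inequality.

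The main obstacle is that the absolute lower bound $c/2$ stated in Lemma~\ref{(8')halfbound} is too weak on its own: there is no a priori reason for $\langle T_0, \mu\rangle$ to be smaller than $c/2$, so one needs an estimate that grows with $n$. The proportional refinement supplies exactly this, the growth coming from the eigencurrent identity $\langle T_0 w\phi^n, \mu_+\rangle = \lambda_+^n \langle T_0 w, \mu_+\rangle$ together with the uniform lower bound $\langle T_0 w, \mu_+\rangle \geq c$ furnished by Lemma~\ref{several-old}.
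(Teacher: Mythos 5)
Your proposal follows the paper's proof essentially verbatim: take $V$ and $T_0$ from Lemma~\ref{several-old}, use $T_0 w \phi^n \in V$ and the eigenvalue relation to get $\langle T_0 w\phi^n, \mu_+\rangle \ge \lambda_+^n c$, pass to $\langle T_0 w\phi^n, \mu\rangle$ via a halving bound for $\mu$ near $\mu_+$, and choose $n_0$ to beat $\langle T_0,\mu\rangle$. The one place where you improve on the paper's exposition is worth noting: Lemma~\ref{(8')halfbound} as \emph{stated} only gives $\langle T,\mu\rangle \ge c/2$ uniformly over $V$, which is too weak (applying it directly to $V\phi^n$ with constant $\lambda_+^n c$ would a priori yield a neighborhood of $\mu_+$ depending on $n$); your explicit ``proportional'' reformulation $\langle T,\mu\rangle \ge \tfrac12\langle T,\mu_+\rangle$ for $T\in V$, which the paper's proof of Lemma~\ref{(8')halfbound} in fact establishes, is exactly what the argument needs and makes the $n$-uniformity of the neighborhood transparent.
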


\begin{proof}
Consider a set $V \subseteq cv^1_N$ and a tree $T_0 \in V$ as in Lemma \ref{several-old}.  By definition we have $\lambda_+ > 1$, and by Lemma \ref{several-old}(2), the infimum $c$ of all $\langle T, \mu_+ \rangle $, for any $T \in V$, satisfies $c > 0$.  Thus we can pick $n_0 \geq 0$ so that
$\lambda_+^{n_0} c > 100\langle T_0, \mu_+ \rangle$. From the continuity of the intersection form we deduce
$$\frac{\lambda_+^n c}{2}
> \langle T_0, \mu \rangle$$ for all $n \geq n_0$ and for any $\mu$ in some sufficiently small neighborhood $U_0$ of $\mu_+$.

By part (4) of Lemma~\ref{several-old} the tree $T_0 w$ is contained
in $V$, and hence $T_0 w \phi^n$ is contained in $V \phi^n$. From parts (2)
and (3) of Lemma~\ref{several-old} we deduce that every $T \in V \phi^n$ satisfies:
$$\langle T, \mu_+ \rangle \geq \lambda_+^n c$$
Now Lemma \ref{(8')halfbound}, whose hypotheses are guaranteed by
Lemma \ref{several-old} (1) and (2), implies that
$$\langle T, \mu \rangle \geq \frac{\lambda_+^n c}{2}$$
for any $T \in V \phi^n$ and any $\mu$ in a sufficiently small neighborhood $U' \subseteq\Curr(\FN)$ of $\mu_+$.
It follows from our above choice of $n_0$ that
$$
\langle T_0, w \phi^n \mu \rangle  \,\, = \,\,
\langle T_0 w \phi^n, \mu \rangle  \,\, \geq \frac{\lambda_+^n c}{2}
> \langle T_0, \mu \rangle
$$
for any $\mu$ in $U := U' \cap U_0$.
\end{proof}

We now consider the (left) action of $G$ on $\PCurr$, which by the hypotheses of Proposition \ref{crucial-lemma} is again an open 2-generator ping-pong action.

\smallskip

By the North-South dynamics of iwips (see the results of
Section~\ref{NS}) for any sufficiently large $m \geq 1$  the power
$\phi^m$  maps any compact subset of $\PCurr$, that excludes the
repelling fixed point $[\mu_-]$ of $\phi$,  into  $\Pr U \subseteq\PCurr$.
Here $\Pr U \subseteq\PCurr$ means the image of a neighborhood  $U \subseteq\Curr(\FN)$ of the $\phi$-expanding eigencurrent $\mu_+$ as above.  As an easy consequence of the open 2-generator ping-pong assumption of the $G$-action on $\PCurr$ we thus obtain:

\begin{lem}
\label{forward-limit-region}
For every neighborhood $U \subseteq\Curr(\FN)$ of $\mu_+$ there exists
$m_1 \geq 1$ such that for all $m \geq m_1$ and for every  reduced
word  $w$  in  $\phi^{\pm 1}$  and  $\psi^{\pm 1}$  which does not start or end with  $\phi\inv$, the forward limit region of $\phi^m w$ in $\PCurr$ is contained in  $\Pr U$.
\end{lem}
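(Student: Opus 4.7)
The plan is to reduce the statement to a single application of the uniform North--South dynamics encoded in Proposition~\ref{prop:summary}(4). Since $m\ge 1$, the first letter of $\phi^m w$ is $\phi$, which for the left action on $\PCurr$ is the final acting letter, so by definition the forward limit region of $\phi^m w$ equals $\bigcap_{n \ge 1}(\phi^m w)^n(N)$ and in particular is contained in $(\phi^m w)(N) = \phi^m(w(N))$. Thus it suffices to exhibit an $m_1\ge 1$ such that $\phi^m(w(N)) \subseteq \Pr U$ for every admissible $w$ and every $m \ge m_1$.

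First I would carry out the routine ping-pong book-keeping to locate $w(N)$. If $w$ is empty then $w(N) = N$. Otherwise write $w = x_1 \cdots x_k$. Since $x_k \ne \phi^{-1}$, the region $N$ lies in the appropriate ``not-'' hemisphere for $x_k$, and an induction on the word length (using reducedness of $w$ at each step, so that after applying $x_i$ the image sits in the geographical region of $x_i$, which is disjoint from the one associated to $x_{i-1}^{-1}$) shows that $w(N)$ is contained in the geographical region of the first letter $x_1$. Because $x_1 \ne \phi^{-1}$ as well, this region is one of $N$, $E$, or $W$. In every case $w(N) \subseteq N \cup E \cup W$, which is disjoint from $S$.

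Next I would choose an open neighborhood $U' \subseteq \PCurr$ of $[\mu_+]$ with $U' \subseteq \Pr U$; such a $U'$ exists because $\mu_+$ is an interior point of $U$ and the canonical projection $\Curr(\FN) \smsm \{0\} \to \PCurr$ is an open map (being the quotient by the continuous $\mathbb R_{>0}$-action). Because the ping-pong action on $\PCurr$ is open by hypothesis, $S$ is an open neighborhood of $[\mu_-]$, so Proposition~\ref{prop:summary}(4) furnishes an integer $m_1 \ge 1$ with $\phi^m(\PCurr - S) \subseteq U'$ for every $m \ge m_1$. Combining the two steps, for $m \ge m_1$ and any admissible $w$ we get $(\phi^m w)(N) = \phi^m(w(N)) \subseteq \phi^m(\PCurr - S) \subseteq U' \subseteq \Pr U$, and hence the forward limit region is contained in $\Pr U$. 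No real obstacle arises: the substantive input has already been packaged in Proposition~\ref{prop:summary}(4), and the remaining work is a direct ping-pong argument plus a minor topological observation about the projection to $\PCurr$.
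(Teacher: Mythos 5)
Your proof is correct and follows essentially the same route as the paper, which proves this lemma implicitly in the paragraph immediately preceding its statement: the North--South dynamics of $\phi^m$ (packaged as Proposition~\ref{prop:summary}(4)) pushes everything outside the open neighborhood $S$ of $[\mu_-]$ into $\Pr U$, and the ping-pong book-keeping shows that $w(N)$ avoids $S$ for every admissible $w$. You have merely filled in the routine details (the induction locating $w(N)$, the cyclic-reducedness of $\phi^m w$, and the openness of the projection $\Curr(\FN)\smallsetminus\{0\}\to\PCurr$) that the authors leave as an "easy consequence."
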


We can now prove Proposition \ref{crucial-lemma}:

\begin{proof}[Proof of Proposition \ref{crucial-lemma}]
Let $U$, $T_0$, $n_0$ and $m_1$ be as given in Lemma \ref{(8)old} and
Lemma \ref{forward-limit-region}, so that both apply to the given word
$w = \phi^m w' \phi^n$, where $w'$ is a reduced word in $\phi^{\pm 1}$ and
$\psi^{\pm 1}$ which does not start or end with  $\phi\inv$, and where $n \geq n_0$ and $m \geq m_0 := \max(m_1, n_0)$ holds.

Recall that by assumptions of Proposition \ref{crucial-lemma}, we are
given a projective current $[\mu]$ in the forward limit region of $w$
such that $[\mu]$ is fixed by $w$, so that
$w \mu=\lambda \mu$ for some $\lambda>0$. We need to prove that $\lambda>1$.

Since $[\mu]$ contained in the forward limit region of $w$ in
$\PCurr$, it follows that $[\mu]$ is contained in the set $\Pr U$, by Lemma \ref{forward-limit-region}. Hence some scalar multiple $\mu'$ of $\mu$ is contained in $U$, and we deduce from Lemma~\ref{(8)old} that:
$$
\langle T_0, w \mu' \rangle=\langle T_0, \phi^m w' \phi^n \mu' \rangle  \,\, > \,\,   \langle T_0, \mu' \rangle
$$
But since $\mu$ is projectively fixed by $w = \phi^m w' \phi^n$, we have:
$$
\langle T_0, w \mu \rangle  \,\, = \,\,   \langle T_0, \lambda \mu \rangle  \,\, = \,\,   \lambda \langle T_0, \mu \rangle
$$
Hence $\lambda\langle T_0,\mu\rangle > \langle T_0,\mu\rangle$.
Note that $T_0$ is a tree with a free simplicial action of $\FN$, which implies $\langle T_0, \mu \rangle \neq 0$. Therefore
$$
\lambda > 1 \, ,
$$
which proves Proposition \ref{crucial-lemma}.
\end{proof}

An interesting feature of the above proof is that it needs the
ping-pong property of the action of $G$ on both spaces, $\CVNbar$ and
$\PCurr$. As one of these actions is a left action and the other one a
right action, the words $w$ considered must both, start and end in
large powers of $\phi$.  However, this is not a problem for the
application in the next section, since the property of an automorphism
to be an iwip is invariant under conjugation in $\Out(F_N)$.

Recall that $\phi$, $\mu_+=\mu_+(\phi)$ and $\mu_-=\mu_-(\phi)$ are as
in Convention~\ref{conv}.
We finish this section with a lemma that will be used in the next section:

\begin{lem}
\label{strictly-bigger}
There exists neighborhoods $U_+(\phi) \subseteq \Curr(\FN)$ of $\mu_+$ and $U_-(\phi) \subseteq \Curr(\FN)$ of $\mu_-$ such that
\[\langle T, \mu'_+ \rangle + \langle T, \mu'_- \rangle > 0\]
for any $T \in \cvnbar$ and any $\mu'_+ \in U_+, \mu'_- \in U_-$.
\end{lem}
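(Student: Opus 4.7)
The plan is to argue by contradiction, using compactness of $\CVNbar$ together with the characterization of zero intersections given by Proposition~\ref{prop:zero}. The core observation is that no single tree $T \in \cvnbar$ can satisfy both $\langle T, \mu_+ \rangle = 0$ and $\langle T, \mu_- \rangle = 0$ simultaneously, because the first equation forces $[T] = [T_-]$ while the second forces $[T] = [T_+]$, and these two points of $\CVNbar$ are distinct. So I expect the conclusion to follow from a routine compactness argument once the correct normalization of the trees is chosen.

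More precisely, suppose the conclusion fails. Then I can choose sequences $\mu_+^{(k)} \to \mu_+$ and $\mu_-^{(k)} \to \mu_-$ in $\Curr(\FN)$, together with trees $T_k \in \cvnbar$, such that
\[
\langle T_k, \mu_+^{(k)} \rangle + \langle T_k, \mu_-^{(k)} \rangle = 0.
\]
Since the intersection form is non-negative, this forces both terms to vanish for every $k$. Now I rescale each $T_k$ by a positive constant $c_k > 0$ so that the normalized trees $c_k T_k$ lie in a compact slice of $\cvnbar$ projecting homeomorphically onto $\CVNbar$ (for instance, fix a finite generating set $S$ of $\FN$ and impose $\max_{s \in S}\|s\|_{c_k T_k} = 1$). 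By compactness, after passing to a subsequence, $c_k T_k$ converges to some $T_\infty \in \cvnbar$. Since scaling does not affect vanishing, we still have $\langle c_k T_k, \mu_+^{(k)} \rangle = 0$ and $\langle c_k T_k, \mu_-^{(k)} \rangle = 0$ for every $k$.

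By the joint continuity of the intersection form (Proposition~\ref{int}), passing to the limit gives
\[
\langle T_\infty, \mu_+ \rangle = 0 \qquad \text{and} \qquad \langle T_\infty, \mu_- \rangle = 0.
\]
Applying Proposition~\ref{prop:zero}(1) to the first equality yields $[T_\infty] = [T_-]$, and applying the same result to the second yields $[T_\infty] = [T_+]$. Since $\phi$ is a hyperbolic iwip with $[T_+] \neq [T_-]$, this is a contradiction, completing the proof.

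I do not anticipate a genuine obstacle here: the only mild subtlety is ensuring that the limit tree $T_\infty$ is a bona fide element of $\cvnbar$ (and not, say, the zero object), which is guaranteed by the normalization through a compact cross-section of the projectivization map $\cvnbar \setminus \{0\} \to \CVNbar$. Everything else is a direct combination of continuity of the intersection form and the rigidity statement of Proposition~\ref{prop:zero}.
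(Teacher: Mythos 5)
Your proof is correct and follows essentially the same line the paper takes (the paper compresses it to one sentence: the positivity holds for $(\mu_+,\mu_-)$ by Proposition~\ref{prop:zero}, then invoke continuity of the intersection form, with the implicit compactness coming from $\CVNbar$ and homogeneity of $\langle\cdot,\cdot\rangle$ under scaling of $T$). One small caveat in your normalization: imposing $\max_{s\in S}\|s\|_{c_k T_k}=1$ over a free basis $S$ need not be possible, since a very small minimal action may have every element of $S$ elliptic (e.g.\ a two-vertex splitting of $F_N$ where both basis elements fix points); you should instead normalize over a slightly larger finite set (such as all words of length at most $2$ in $S$, or any finite set containing a hyperbolic element for every $T\in\cvnbar$), after which the compact cross-section argument goes through unchanged.
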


\begin{proof}
It suffices to recall that, by Proposition~\ref{prop:zero}, the above inequality is true for $\mu'_+ = \mu_+$ and $\mu'_- = \mu_-$, and to use the continuity of the intersection form.
\end{proof}


\section{Every Schottky group contains a rank-two free iwip subgroup}

We first prove a property that characterizes those hyperbolic
automorphisms of $\FN$ which are not iwips.

\begin{prop}
\label{poles1}
For every $\Phi \in \Aut(\FN)$ which is hyperbolic but not an iwip
there exist $k\ge1 $, a tree $T_0 \in \cvnbar$ and currents $\nu_+,\nu_- \in \Curr(\FN) \smallsetminus \{0\}$ with the following properties:
\begin{enumerate}
\item
$\Phi^k(\nu_+) = \rho_+ \nu_+$ and $\Phi^k(\nu_-) = \rho_-\inv \nu_- \, ,$ for some $\rho_+, \rho_- > 1$.

\item $\langle T_0, \nu_+ \rangle \, \,  = \, \,  \langle T_0, \nu_- \rangle \, \,  = 0$.
\end{enumerate}
\end{prop}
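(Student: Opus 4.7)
The plan is to reduce the problem to a proper free factor $H\leq\FN$ of rank at least two on which a power of $\Phi$ restricts to a hyperbolic iwip, produce eigencurrents on $H$ via Proposition~\ref{prop:rm}, push them forward to $\Curr(\FN)$ via the natural extension map associated to the inclusion $H\hookrightarrow\FN$, and take $T_0$ to be the Bass--Serre tree of a splitting $\FN=H\ast F'$, which kills anything supported on $H$ and its conjugates.

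Since $\phi\in\Out(\FN)$ is not iwip, some positive power $\phi^k$ preserves the conjugacy class of a proper free factor of $\FN$; choose such a free factor $H$ of minimal rank and fix a lift $\Psi\in\Aut(\FN)$ of $\phi^k$ with $\Psi(H)=H$. Atoroidality of $\Phi$ rules out $H$ being cyclic, for then $\Psi^2$ would fix a generator of $H$; hence the rank of $H$ is at least two. Minimality of the rank of $H$ forces $\Psi|_H$ to represent an iwip in $\Out(H)$: any proper free factor $H'<H$ whose $H$-conjugacy class is preserved by some power of $\Psi|_H$ would be a smaller-rank proper free factor of $\FN$ whose $\FN$-conjugacy class is preserved by a power of $\phi^k$, contradicting minimality. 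The same transfer principle, combined with atoroidality of $\Phi$, shows that $\Psi|_H$ is itself atoroidal as an element of $\Aut(H)$. Thus $\Psi|_H$ is a hyperbolic iwip automorphism of $H$.

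Proposition~\ref{prop:rm} applied inside $H$ produces nonzero $\nu_+^H,\nu_-^H\in\Curr(H)$ with $\Psi|_H(\nu_+^H)=\rho_+\nu_+^H$ and $\Psi|_H(\nu_-^H)=\rho_-^{-1}\nu_-^H$ for some $\rho_+,\rho_->1$. Since $H$ is a free factor, the $\FN$-equivariant embedding $\partial^2 H\hookrightarrow\partial^2\FN$ together with the standard summation over $\FN/H$-translates yields a natural continuous $\R_{\geq 0}$-linear injection $\iota\colon\Curr(H)\hookrightarrow\Curr(\FN)$ which, on rational currents, sends each $\eta_h^H$ to a positive multiple of $\eta_h^{\FN}$. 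Whenever $\Theta\in\Aut(\FN)$ satisfies $\Theta(H)=H$, one has $\Theta\circ\iota=\iota\circ(\Theta|_H)$ by construction. Setting $\nu_\pm:=\iota(\nu_\pm^H)$ and using that inner automorphisms act trivially on $\Curr(\FN)$ (so that $\Phi^k$ and $\Psi$ induce identical maps on $\Curr(\FN)$), we obtain
\[
\Phi^k(\nu_+)=\Psi(\iota(\nu_+^H))=\iota(\Psi|_H(\nu_+^H))=\rho_+\nu_+,
\]
and analogously $\Phi^k(\nu_-)=\rho_-^{-1}\nu_-$, with $\nu_\pm\neq 0$ by injectivity of $\iota$.

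Finally, pick a free factor complement so that $\FN=H\ast F'$ and let $T_0$ be the Bass--Serre tree of this splitting. Then $T_0$ is a simplicial $\FN$-tree with trivial edge stabilizers, hence very small, so $T_0\in\cvnbar$. Every nontrivial $h\in H$ is elliptic in $T_0$, so $\langle T_0,\eta_h^{\FN}\rangle=\Vert h\Vert_{T_0}=0$. Using density of rational currents in $\Curr(H)$, write $\nu_\pm^H$ as a limit of $\R_{\geq 0}$-linear combinations of counting currents on elements of $H$; applying $\iota$ and invoking continuity and linearity of the intersection form yields $\langle T_0,\nu_\pm\rangle=0$. The main technical subtlety is the construction and the stated equivariance of the extension map $\iota$ for the free factor $H$; once this piece of standard geodesic-currents machinery is in place, the proof assembles readily from Proposition~\ref{prop:rm} applied inside $H$ together with the elliptic-vertex-group property of Bass--Serre trees of free splittings.
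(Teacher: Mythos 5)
Your proof is correct and, in one key place, takes a genuinely different and simpler route than the paper's. Both proofs pick a minimal-rank proper free factor $H$ (called $U$ in the paper) whose conjugacy class is preserved by some $\Phi^k$, argue that $\Phi^k$ restricted to $H$ is a hyperbolic iwip, and push the $H$-eigencurrents forward to $\Curr(\FN)$ via the extension map $\iota_*$ of \cite{Ka2}. The divergence is in the choice of $T_0$. The paper takes $T_0$ to be the projectively $\Phi$-invariant ``stable tree'' built from a relative train track representative via \cite{GJLL}, and then has to do real work to show that $T_0$ has a nontrivial elliptic subgroup containing a suitable $U$: this requires case analysis on whether $T_0$ is simplicial, whether the train track has more than one stratum, and, in the single-stratum case, a blow-up argument from \cite{Lu3}. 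You instead take $T_0$ to be the Bass--Serre tree of a free splitting $\FN = H \ast F'$, which has trivial edge stabilizers, is therefore very small and lies in $\cvnbar$, and has $H$ elliptic by construction---so $\langle T_0, \nu_\pm\rangle = 0$ is immediate once one observes that the pushed-forward currents are supported on $\partial^2 H$ and its translates. Since the proposition nowhere requires $T_0$ to be $\Phi$-invariant (and indeed $\Phi$-invariance of $T_0$ is not used later in Theorem~\ref{iwips-only1}), your choice is legitimate and eliminates the train-track and blow-up machinery entirely. What the paper's construction buys is a $\Phi$-invariant $T_0$, which is a more canonical object, but it is not needed for this statement. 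Your minimal-rank argument forcing $\Psi|_H$ to be iwip, your argument that $H$ cannot be cyclic by atoroidality, and your verification of the eigenvalue equations via equivariance of $\iota$ are all sound and match the paper's reasoning essentially verbatim.
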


\begin{proof}
The $\R$-tree  $T_0$ is constructed as described in \cite{GJLL} from a relative train track representative (in the sense of \cite{BH92}) $f : \tau \to \tau$ of $\Phi$ using a (row) eigenvector of the transition matrix $M(f)$ which has non-zero coefficients only for the top stratum of $\tau$. The translation length with respect to $T_0$ satisfies
$$|| w ||_{T_0} = 0$$
for all conjugacy classes $[w] \subseteq \FN$ which are represented by a loop in $\tau$ that does not traverse any edge of the top stratum.  Note that $T_0$ is projectively $\Phi$-invariant, but that the stretching factor may well be equal to 1, in which case $T_0$ is simplicial.

Since $\Phi$ has a reducible power, we can choose a proper free factor $U$ of $\FN$ which is (up to
conjugation) fixed by some power $\Phi^k$ with $k \geq 1$, and which
does not contain properly any non-trivial free factor of $\FN$ that is
$\Phi^h$-invariant (up to conjugation) for any $h \geq 1$. After
composing $\Psi=\Phi^k$ with an inner automorphism if necessary, we
may assume that $\Psi(U)=U$. Note that $U$ is not cyclic since by assumption $\Phi$ is hyperbolic.

We claim that, moreover, $U$ can be
chosen in such a way that $U$ fixes a point of $T_0$. Indeed, since
every maximal elliptic subgroup for the above tree $T_0$ is (up to conjugation) a
$\Phi$-invariant free factor of $F_N$, if such a subgroup is non-trivial, it
must contain the $\Phi$-orbit of some free factor $U$ as above. It
thus remains to argue that there exist at least one elliptic subgroup of
$T_0$ which is non-trivial.  If $T_0$ is simplicial, this is obvious,
as otherwise the action of $\FN$ on $T_0$ would be free, and since
$T_0$ is $\Phi$-invariant, the automorphism $\Phi$ would be periodic
(up to conjugation) and hence not hyperbolic.  In the complementary
case, where $T_0$ is not simplicial, the top stratum of the above
train track map $f: \tau \to \tau$ must be exponentially growing.
Thus, if $\tau$ has more than one stratum, every lower stratum
contributes to the elliptic subgroups of $T_0$, and hence the bottom
stratum would define a non-trivial elliptic subgroup for $T_0$, as
required. Finally, if there is only one stratum in $\tau$, then either
$\Phi$ was an iwip, or else, by Proposition~5.1 of \cite{Lu3} (see also chapter~7 of~\cite{Lu2}),
one of the vertices of
$\tau$ can be blown-up to give a new $\Phi$-invariant train track with
a periodic top stratum. For this new train track $\tau'$ its stable
tree $T_0'$ will be simplicial (as explained in detail in the proof of Proposition~5.1 of \cite{Lu3}),
and the previous arguments would
apply.  Thus indeed $U$ and $T_0$ can be chosen so that $U$ fixes a
point in $T_0$.

By the choice of $U$, the restriction $\Psi|_U\in
Aut(U)$ is an iwip automophism of $U$. Moreover, $\Psi|_U$ has no
periodic conjugacy classes (since $\Phi$ has no periodic conjugacy
classes), so that $\Psi|_U\in \Aut(U)$ is also atoroidal.

Therefore there exist a projectively unique
$\Psi|_U$-invariant  expanding current $\mu^U_+$ and  a projectively unique
$\Psi|_U$-invariant  contracting current $\mu^U_-$ in $\Curr(U)$. More
precisely, $\Psi|_U(\mu^U_+)=\rho_+ \mu^U_+$ and
$(\Psi|_U)^{-1}(\mu^U_-)=\rho_- \mu^U_-$ for some $\rho_+, \rho_->1$.

Recall that, as shown in \cite{Ka2}, the inclusion $\iota: U\to F_N$
defines a continuous linear map $\iota_\ast: \Curr(U)\to \Curr(F_N)$
which extends the obvious map on conjugacy classes. Namely, for any
nontrivial $u\in U$ we have $\iota_\ast (\eta_u^U)=\eta_u^{F_N}$ where
$\eta_u^U\in \Curr(U)$ and $\eta_u^{F_N}\in \Curr(F_N)$ are the rational
currents defined by $u$ on $U$ and $F_N$ respectively. Moreover, the
map $\iota_*$ has a particularly simple form in a simplicial chart
corresponding to a free basis $A$ of $F_N$ of the form $A=B\sqcup C$
where $B$ is a free basis of $U$. Namely, if we use $A$ as a
simplicial chart on $F_N$ and $B$ as a simplicial chart on $U$ then
for every $\mu\in \Curr(U)$ and every nontrivial freely reduced word
$v\in F(A)$ we have:
\[
\langle v,\iota_*\mu\rangle_A=\begin{cases}\langle v,\mu\rangle_B,
  \quad \text { if } v\in F(B)=U\\
0, \quad\text{ otherwise}.\end{cases}
\]

Put $\nu_+=\iota_\ast(\mu^U_+)$ and $\nu_-=\iota_\ast(\mu^U_-)$.

By the main result of \cite{KL3}, since the supports of $\nu_+$ and
$\nu_-$ are carried by $U$ and since every element of $U$ has
translation length zero on $T_0$, it follows that $\langle T_0, \nu_+
\rangle = \langle T_0, \nu_- \rangle = 0$.

We claim that $\Psi\nu_+=\rho_+\nu_+$ and
$\Psi\nu_-=\rho_-\inv\nu_-$. Indeed, choose a nontrivial element
$a\in U$. We know that (up to rescaling $\nu_+$),
\[
\mu^U_+=\lim_{n\to\infty}
\frac{\Psi|_U^n(\eta_a)}{\rho_+^n}=\lim_{n\to\infty}
\frac{\eta^U_{\Psi^n|_U(a)}}{\rho_+^n}
\]
and therefore by linearity and continuity of $\iota_*$ and using the
fact that $\iota_\ast(\eta_u^U)=\eta_u^{F_N}$ for $u\in U, u\ne 1$, we have:
\[
\nu_+=\iota_\ast(\mu^U_+)=\lim_{n\to\infty} \frac{\eta^{F_N}_{\Psi^n(a)}}{\rho_+^n}.
\]
Using the fact that $\Psi|_U(\mu_+^U)=\rho_+\mu_+^U$, we conclude:
\begin{gather*}
\Psi\nu_+=\lim_{n\to\infty} \Psi
\frac{\eta^{F_N}_{\Psi^n(a)}}{\rho_+^n}=\lim_{n\to\infty}\frac{\eta^{F_N}_{\Psi^{n+1}(a)}}{\rho_+^n}=\\
\lim_{n\to\infty}\frac{\iota_\ast(\eta^{U}_{\Psi|_U^{n+1}(a)})}{\rho_+^n}=\iota_\ast(\lim_{n\to\infty}
\frac{\eta^U_{\Psi|_U^{n+1}(a)}}{\rho_+^n})=\iota_\ast(\Psi|_U(\mu_+^U))=\\
\iota_\ast(\rho_+\mu_+^U)=\rho_+\nu_+.
\end{gather*}
A similar argument shows that $\Psi^{-1}\nu_-=\rho_-\nu_-$.

Thus $T_0$, $\nu_+$ and $\nu_-$ have all the properties required by
the conclusion of the proposition.

\end{proof}

\begin{thm}
\label{iwips-only1}
Let $\phi_0,\psi_0\in \Out(\FN)$ be hyperbolic iwips such that the subgroup
$\langle \phi_0,\psi_0\rangle \subseteq \Out(\FN)$ is not virtually cyclic. Then there
exist $n,m\ge 1$ such that $G=\langle \phi_0^{n},\psi_0^{m}\rangle\subseteq
\Out(\FN)$ is free of rank two and every non-trivial element of $G$ is
a hyperbolic iwip.
\end{thm}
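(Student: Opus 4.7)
The plan is to argue by contradiction, combining Proposition \ref{poles1} (which produces obstructing ``pole'' data $T_0, \nu_+, \nu_-$ for any hyperbolic non-iwip automorphism) with the ping-pong machinery of Section~\ref{NS} and the non-vanishing from Lemma \ref{strictly-bigger}. First, I would invoke Corollary \ref{cor:3} and Theorem \ref{thm:hyperb} to choose $n, m \ge 1$ so large that $G := \langle \phi, \psi \rangle$, with $\phi := \phi_0^n$ and $\psi := \psi_0^m$, is free of rank two, every non-trivial element of $G$ is hyperbolic, and both the left action of $G$ on $\PCurr$ and the right action on $\CVNbar$ are open $2$-generator ping-pong actions whose four geographical regions may be taken to be arbitrarily small open neighborhoods of the four eigenobjects of $\phi_0$ and $\psi_0$ (possible by Proposition \ref{prop:summary} and Proposition \ref{prop:sum}). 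I would further enlarge $n,m$ so that the hypotheses of Proposition \ref{crucial-lemma} and its $\phi \leftrightarrow \psi$ symmetric analogue hold, and so that Lemma \ref{forward-limit-region} forces forward limit regions into the pre-chosen small neighborhoods.

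Suppose for contradiction that some non-trivial $\theta \in G$ fails to be iwip. Since $\theta$ is automatically hyperbolic, Proposition \ref{poles1} applied to a lift $\Theta \in \Aut(\FN)$ of $\theta$ supplies $k \ge 1$, a tree $T_0 \in \cvnbar$, and non-zero currents $\nu_+, \nu_- \in \Curr(\FN)$ with
\[
\theta^k \nu_+ = \rho_+ \nu_+, \qquad \theta^k \nu_- = \rho_-\inv \nu_-, \qquad \langle T_0, \nu_+ \rangle = \langle T_0, \nu_- \rangle = 0,
\]
for some $\rho_+, \rho_- > 1$. Since the iwip property is invariant under conjugation, I may replace $\theta$ by a conjugate inside $G$ and $\theta^k$ by a cyclic conjugate so that its cyclically reduced word in $\phi^{\pm 1}, \psi^{\pm 1}$ has leading and trailing syllables amenable to applying Proposition \ref{crucial-lemma}. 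Because $[\nu_\pm]$ are projective fixed points of the cyclically reduced $\theta^k$, Remark \ref{fixed-points} places each of them in either the forward or backward limit region of $\theta^k$. The eigenvalue inequalities $\rho_+ > 1 > \rho_-\inv$, combined with Proposition \ref{crucial-lemma} (or its $\psi$-analogue), then rule out $[\nu_+]$ being in the backward region and $[\nu_-]$ being in the forward region, so $[\nu_+]$ must lie in the forward limit region of $\theta^k$ and $[\nu_-]$ in the forward limit region of $\theta^{-k}$.

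By Lemma \ref{forward-limit-region} and the largeness of $n, m$, these forward limit regions sit inside arbitrarily small neighborhoods of, respectively, one of $[\mu_+(\phi_0)], [\mu_+(\psi_0)]$ and one of $[\mu_-(\phi_0)], [\mu_-(\psi_0)]$; by Corollary \ref{cor:pair} these four projective currents are all distinct. An extension of Lemma \ref{strictly-bigger} to such cross-pairs (which follows from Proposition \ref{prop:zero} and continuity of the intersection form, using that the four trees on which the pairing with the respective eigencurrents vanishes are pairwise distinct) yields, for any tree $T$ and any currents $\mu'_\pm$ in those small neighborhoods,
\[
\langle T, \mu'_+ \rangle + \langle T, \mu'_- \rangle > 0.
\]
Taking $T = T_0$ and $\mu'_\pm$ to be suitable positive rescalings of $\nu_\pm$ gives $\langle T_0, \nu_+ \rangle + \langle T_0, \nu_- \rangle > 0$, contradicting the simultaneous vanishing from Proposition \ref{poles1}. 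Hence every non-trivial element of $G$ is a hyperbolic iwip.

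The main obstacle is the conjugation/cyclic-reduction step for $\theta^k$. When the cyclically reduced form of $\theta$ has strictly alternating $\phi$- and $\psi$-syllables, no cyclic conjugate starts and ends with positive powers of the same generator, so Proposition \ref{crucial-lemma} as literally stated does not apply simultaneously to $\theta^k$ and $\theta^{-k}$. Overcoming this requires invoking the $\phi \leftrightarrow \psi$ symmetric version of Proposition \ref{crucial-lemma} for the opposite pair of syllables, and separately matching $\theta^k$ and $\theta^{-k}$ to whichever version applies to their respective leading and trailing syllables, so that in every case the forward limit regions of both $\theta^k$ and $\theta^{-k}$ can be funneled into small neighborhoods of an attracting and a repelling eigencurrent of $\phi_0$ or $\psi_0$, as needed to trigger the final contradiction.
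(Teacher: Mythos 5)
You have the right overall strategy and the right ingredients: contradiction via Proposition~\ref{poles1}, the ping-pong machinery of Section~\ref{NS}, Proposition~\ref{crucial-lemma}, Remark~\ref{fixed-points}, Lemma~\ref{forward-limit-region}, and Lemma~\ref{strictly-bigger}. However, there is a genuine gap at the step you yourself flag as the ``main obstacle,'' and the fix you propose does not close it.

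The difficulty is not merely that the cyclically reduced form of $\theta^k$ may strictly alternate $\phi$- and $\psi$-syllables; it is that the individual syllables may be \emph{short}. Proposition~\ref{crucial-lemma} (and any of its $\phi\leftrightarrow\psi$ or inverse variants) requires the cyclically reduced word $w$ to begin \emph{and} end with a syllable of a single generator whose exponent exceeds the thresholds $m_0,n_0$ produced by Lemma~\ref{(8)old} and Lemma~\ref{forward-limit-region}, and these thresholds are in general larger than $1$. For an element such as $\theta=\phi\psi$ (with $\phi=\phi_0^n$, $\psi=\psi_0^m$), $\theta^k=(\phi\psi)^k$ has all $\phi$-syllables of length $1$ in the ping-pong generators $\phi,\psi$; no cyclic conjugate of $\theta^{\pm k}$ begins and ends with $\phi^{\ge m_0}$, nor with $\psi^{\ge m_0}$. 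Switching to the $\psi$-version of the lemma gives you nothing here. You also cannot rewrite $\theta^k$ in terms of $\phi_0,\psi_0$ to obtain long syllables, because Proposition~\ref{crucial-lemma} requires the ping-pong hypothesis to hold for the pair $(\phi_0,\psi_0)$ itself, which is not available.

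The paper's proof closes this gap with a \emph{second} power pass that your proposal omits. After fixing $\phi=\phi_0^{M_0}$, $\psi=\psi_0^{M_0}$ so that $\langle\phi,\psi\rangle$ is ping-pong on both $\CVNbar$ and $\PCurr$ with respect to $\phi,\psi$, the paper defines $G_1=\langle \phi^M,\psi^M\rangle$ for a further large $M$, and proves that \emph{this} group is the one in the conclusion of the theorem. The key point is structural: any non-trivial $\alpha\in G_1$ not conjugate in $G$ to a power of $\phi$ or $\psi$ has, when written as a cyclically reduced word in $\phi,\psi$, all syllable exponents that are multiples of $M$, hence of absolute value at least $M$. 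Breaking such a $\phi^{\pm M}$-block in the middle by cyclic conjugation (and replacing $\alpha$ by $\alpha^{-1}$ if needed) produces a word $u_1$ starting with $\phi^{m'}$ and ending with $\phi^{n'}$ with $m',n'\ge M/2-1$, so that $w=u_1^k$ satisfies the hypotheses of Proposition~\ref{crucial-lemma} for $w$ and the corresponding version for $w^{-1}$ simultaneously. This also keeps both forward limit regions anchored at $[\mu_+(\phi)]$ and $[\mu_-(\phi)]$ for the \emph{same} iwip $\phi$, so that Lemma~\ref{strictly-bigger} applies directly, with no need for the cross-pair extension you invoke (which would, incidentally, also be correct, but is unnecessary once the word is normalized as in the paper). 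Your proof can be repaired by inserting this $G_1\subseteq G$ step; without it, the argument does not handle elements with short syllables and the contradiction cannot be reached.
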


\begin{proof}

We already know by the results of Section~\ref{NS} that if $M_0$ is sufficiently big and $n,m\ge M_0$
 then $G=\langle \phi_0^n,\psi_0^m\rangle$ is free of rank two, that every
nontrivial element of $G$ is atoriodal (i.e. hyperbolic) and that $G$ acts as Schottky group on both, $\CVNbar$ and $\PCurr$ (in the precise sense that the two actions are open 2-generator ping-pong as in Definition \ref{ping-pong}).

Denote $\phi=\phi_0^{M_0}$ and $\psi=\psi_0^{M_0}$.

We define $G_1$ to be the subgroup of $\Out(F_N)$ generated by
$\phi^M$ and $\psi^M$, for $M \geq 1$ large. Every element $\alpha$ of
$G_1 -\{1\}$ is either conjugate in $G$ to some $\phi^k$ or $\psi^k$,
with $k \in \Z \smallsetminus \{0\}$, and thus iwip, or else it is
conjugate in $G$ to a reduced and cyclically reduced word $u$ in
$\phi, \psi$ which contains the subword $\phi^{\pm M}$. After possibly
replacing $\alpha$ by $\alpha^{-1}$, we may assume that $u$ contains
$\phi^M$ as a subword. Hence $u$ is conjugate in $G$ to a reduced word
$u_1$ in $\phi,\psi$ that begins with $\phi^m$ and ends with $\phi^n$
where $m,n\ge \frac{M}{2}-1$ and that represents $\alpha_1\in
\Out(F_N)$ (which is a conjugate of $\alpha$).

Suppose that $\alpha$ is not an iwip, so that $\alpha_1$ is not an iwip either. Then by Proposition~\ref{poles1}
there exist $k\ge 1$, $T_0\in \cvnbar$, $\nu_\pm \in \Curr(F_N)-\{0\}$,
$\rho_\pm >1$ such that $\langle T_0,\nu_+\rangle=\langle T_0,
\nu_-\rangle=0$ and $\alpha_1^k\nu_+=\rho_+\nu_+$,
$\alpha_1^k\nu_-=\rho_-\inv\nu_-$. Then $\alpha_1^k=u_1^k$ and the word
$w=u_1^k$ in $\phi,\psi$ still begins with $\phi^m$ and ends with
$\phi^n$ and has the form $w =u_1^k= \phi^m w' \phi^n$.

Let $[\mu_\pm(\phi)]\in \mathbb P\Curr(F_N)$ be the two fixed
eigencurrents of $\phi$ and let $\mu_\pm(\phi)\in \Curr(F_N)$ be some
representatives of them in $\Curr(F_N)$.
Let $U_+=U_+(\phi)$ and $U_-=U_-(\phi)$ be neighborhoods of
$\mu_+(\phi)$ and $\mu_-(\phi)$ in $\Curr(F_N)$ provided by Lemma
\ref{strictly-bigger}.

If $M$ is big enough, then $n,m\ge \frac{M}{2}-1$ are also big enough so that Proposition \ref{crucial-lemma} can also be applied to $w$ and $w\inv$, and that furthermore, by Lemma \ref{forward-limit-region}, the forward limit region
of $w$ is contained in the image $\Pr U_+$ of the neighborhood $U_+$
of $\mu_+(\phi)$, and similarly for the forward limit region of $w\inv$ and $U_-$.

It follows from Remark \ref{fixed-points} that for every projectively
$w$-fixed current $\mu$ the image $[\mu]$ must be contained in $\Pr
U_+ \cup \Pr U_-$.

The current $\nu_+$ is projectively fixed by $\alpha_1^k=w$ and hence it
is contained in the union of the forward and backward limit regions of
$w$. Therefore $[\nu_+]\in  \Pr U_+\cup \Pr U_-$. Moreover, $\nu_+$ is
projectively fixed by $w$ and is
$w$-expanding (since $\rho_+>1$) while by
Proposition~\ref{crucial-lemma} every projectively fixed current
contained in the backward limit region of $w$ is
$w$-contracting. Hence $[\nu_+]\in \Pr U_+$ so that some non-zero
scalar multiple $\nu_+'$ of $\nu_+$ satisfies $\nu_+'\in U_+$.

A symmetric argument shows that $[\nu_-]\in \Pr U_-$ so that some non-zero
scalar multiple $\nu_-'$ of $\nu_-$ satisfies $\nu_-'\in U_-$.

Thus Lemma \ref{strictly-bigger} applies to $\nu'_+$ and $\nu'_-$,
so that we
obtain
$\langle T_0,\nu'_+\rangle+\langle T_0,\nu'_-\rangle>0$.
But this
contradicts the fact that $\langle T_0,\nu_+\rangle=\langle T_0,
\nu_-\rangle=0$.

Hence $\alpha$ is an iwip, as required, which completes the proof of the
theorem.
${}^{}$
\end{proof}

\begin{cor}\label{cor:sgp}
Let $G\subseteq \Out(F_N)$ be a subgroup which contains some hyperbolic iwip,
and
assume
that $G$ is not virtually cyclic. Then $G$ contains a free
subgroup of rank two
where all
non-trivial elements are hyperbolic iwips.
\end{cor}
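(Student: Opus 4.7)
The plan is to reduce the corollary to Theorem \ref{iwips-only1} by producing, inside $G$, a second hyperbolic iwip that together with the given one generates a non-virtually-cyclic subgroup of $G$. Once such a pair is found, Theorem \ref{iwips-only1} supplies the desired rank-two free subgroup all of whose non-trivial elements are hyperbolic iwips.

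Concretely, let $\phi \in G$ be a hyperbolic iwip, and apply Proposition \ref{prop:dl} to $G$ and $\phi$. Since by hypothesis $G$ is not virtually cyclic, alternative (1) of that proposition fails, so alternative (2) must hold: there exists $g \in G$ such that the conjugate $\psi := g \phi g^{-1} \in G$ is again a hyperbolic iwip (conjugation in $\Out(F_N)$ preserves both the iwip and hyperbolic properties) and satisfies
\[
\{[T_+(\phi)], [T_-(\phi)]\} \cap \{[T_+(\psi)], [T_-(\psi)]\} = \emptyset.
\]
In particular the four points $[T_\pm(\phi)], [T_\pm(\psi)] \in \CVNbar$ are pairwise distinct.

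By Corollary \ref{cor:pair}, the condition that these four fixed points are distinct is equivalent to the subgroup $\langle \phi, \psi \rangle \subseteq \Out(F_N)$ not being virtually cyclic. Hence the pair $\phi, \psi \in G$ satisfies all the hypotheses of Theorem \ref{iwips-only1}: both are hyperbolic iwips, and $\langle \phi, \psi\rangle$ is not virtually cyclic. Applying that theorem yields integers $n, m \geq 1$ such that
\[
H := \langle \phi^n, \psi^m \rangle \subseteq G
\]
is free of rank two and every non-trivial element of $H$ is a hyperbolic iwip. Since $\phi, \psi \in G$, we have $H \subseteq G$, which gives the desired free subgroup of rank two inside $G$ with all non-trivial elements hyperbolic iwips.

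There is essentially no obstacle beyond assembling the already-established tools in the right order; the corollary is a direct consequence of Proposition \ref{prop:dl} (to produce a ``transverse'' second iwip as a conjugate), Corollary \ref{cor:pair} (to translate distinctness of fixed points into non-virtual-cyclicity of the two-generator subgroup), and Theorem \ref{iwips-only1} (to upgrade such a pair to a purely hyperbolic-iwip rank-two free subgroup). The only subtlety worth noting is that the $\psi$ produced by Proposition \ref{prop:dl} is of the form $g \phi g^{-1}$ with $g \in G$, which is what ensures $\psi \in G$ and hence $H \subseteq G$.
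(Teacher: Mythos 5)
Your proof is correct and follows essentially the same route as the paper: apply Proposition~\ref{prop:dl} to produce a second hyperbolic iwip $\psi = g\phi g^{-1} \in G$ with fixed points disjoint from those of $\phi$, observe that $\langle \phi,\psi\rangle$ is not virtually cyclic, and then invoke Theorem~\ref{iwips-only1}. The paper deduces non-virtual-cyclicity directly from the free rank-two subgroup furnished by Proposition~\ref{prop:dl}(2), while you route through Corollary~\ref{cor:pair}; this is a cosmetic difference only.
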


\begin{proof}
By Proposition~\ref{prop:dl}, $G$ contains two hyperbolic iwips $\phi$
and $\psi$ some powers of which generate a free group of rank
two. Therefore $\langle \phi,\psi\rangle$ is not virtually cyclic, and
the statement of the corollary follows from Theorem~\ref{iwips-only1}.

\end{proof}


\end{document}